\setlist[enumerate]{label=\normalfont{(\roman*)}}
\newcommand*{\mc}[1]{\mathcal{#1}}
\newcommand*{\opname}[1]{\operatorname{#1}}
\newcommand*{\GL}{\opname{GL}}
\newcommand*{\SL}{\opname{SL}}
\newcommand*{\SO}{\opname{SO}}
\newcommand*{\Sp}{\opname{Sp}}
\newcommand*{\PSL}{\opname{PSL}}
\renewcommand*{\phi}{\varphi}
\newcommand*{\NN}{\mathbb{N}}
\newcommand*{\ZZ}{\mathbb{Z}}
\newcommand*{\CC}{\mathbb{C}}
\newcommand*{\QQ}{\mathbb{Q}}
\newcommand*{\RR}{\mathbb{R}}
\newcommand*{\EE}{\mathbb{E}}
\newcommand*{\PP}{\mathbb{P}}
\newcommand*{\Jac}{\opname{Jac}}
\newcommand*{\ka}{\kappa}
\newcommand*{\la}{\lambda}
\newcommand*{\ga}{\gamma}
\newcommand*{\rk}{\opname{rk}}
\newcommand*{\pr}{\opname{pr}}
\newcommand*{\id}{\opname{id}}
\newcommand*{\tup}[1]{\underline{#1}}
\newcommand*{\gr}{{\opname{gr}}}
\newcommand*{\len}{\opname{l}}
\newcommand*{\Hom}{\opname{Hom}}
\newcommand*{\End}{\opname{End}}
\newcommand*{\Iso}{\opname{Iso}}
\newcommand*{\Aut}{\opname{Aut}}
\newcommand*{\im}{\opname{im}}
\newcommand*{\ev}{\opname{ev}}
\newcommand*{\Fl}{\opname{Fl}}
\newcommand*{\St}{\opname{St}}
\newcommand*{\norm}[1]{\|#1\|}
\newcommand*{\Sym}{\opname{Sym}}
\newcommand*{\Proj}{\opname{Proj}}
\newcommand*{\pardeg}{\opname{pardeg}}
\newcommand*{\Spec}{\opname{Spec}}
\newcommand*{\sstable}{\textup{(}semi\nobreakdash-\textup{)}stable}
\newcommand*{\PB}{P}
\newcommand*{\rad}{\mc{R}}
\newcommand*{\QTmp}{\opname{QTmp}}
\newcommand*{\Gies}{\opname{Gies}}
\newcommand*{\gies}{\opname{gies}}
\newcommand*{\Quot}{\opname{Quot}}
\newcommand*{\QPPB}{\opname{QPPB}}
\newcommand*{\df}{\opname{df}}
\newcommand*{\ad}{\opname{ad}}
\newcommand{\mynewtheorem}[2]{
  \newaliascnt{#1}{dummy}
  \newtheorem{#1}[#1]{#2}
  \aliascntresetthe{#1}
  % maybe we will squish some autoref defaults, but who cares
  \expandafter\def\csname #1autorefname\endcsname{#2}
}
\theoremstyle{definition}
\theoremstyle{plain}
\theoremstyle{remark}
\begin{document}
 \title{A Generalization of Principal Bundles With a Parabolic or Level Structure}

\author{Nikolai Beck}
\address{%
 Mathematisches Institut, BTU Cottbus--Senftenberg, PF 101344, 03013 Cottbus, Germany}
\email[N.~Beck]{nikolai.beck@b-tu.de}

%\classification{14H60, 14D20}
%\keywords{}

\begin{abstract}
  We define a parameter dependent notion of stability for principal bundles with a certain local decoration, which generalizes both parabolic and level structures, and construct their coarse moduli space. A necessary technical step is the construction of the moduli space of tuples of vector bundles with a global and a local decoration, which we call decorated tumps. We introduce a notion of asymptotic stability for decorated tumps and show, that stable decorated principal bundles can be described as asymptotically stable decorated tumps.
\end{abstract}

 \maketitle
 
 %%%%%%%%%%%%%%%%%%%%%%%%%%%%%%%%%%%%%%
\section*{Introduction}
 %%%%%%%%%%%%%%%%%%%%%%%%%%%%%%%%%%%%%% 
\subsection*{Motivation}
Let $X$ be a smooth projective curve over $\CC$ and $x_0$ a closed point in $X$. There are different examples of vector bundles with a decoration over $x_0$: A parabolic structure on a vector bundle $E$ is a weighted flag in the fiber over $x_0$. By a generalization of the Narasimhan--Seshadri theorem there is a bijection between stable parabolic vector bundles on $X$ and irreducible unitary representations of the fundamental group $\pi_1(X)$ \cite{simpson1990}. A level structure of $E$ is a completed homomorphism $E_{|x_0}\Rightarrow \CC^{\rk(E)}$. These objects play an important role in the compactification of the stack of shtukas using GIT (see \cite{Ngo2007}). In our previous paper \cite{Beck2014DecSwamps} we constructed the moduli space of decorated vector bundles, which generalize both examples. The aim of this article is to establish similar results for decorated principal bundles.

Let $G$ be an affine reductive group and $\sigma$ a representation. We call a pair of a principal $G$-bundle $\PB$ and a point $s$ in the associated bundle $\PB_\sigma$ over $x_0$ a \emph{decorated principal bundle}. We define a notion of stability for decorated principal bundles, depending on a character of $G$ and positive rational number. The main result is the existence of the (projective) moduli space of (semi-)stable decorated principal bundles (see Theorem \ref{thm:moduli_space_of_dec_PB} for a precise statement).

If $Q\subset G$ is a parabolic subgroup and $\sigma$ is the natural action of $G$ on the generalized flag variety $G/Q$, then a decorated principal bundle is a parabolic principal bundle. Our construction thus generalizes the result of Heinloth and Schmitt \cite{heinloth-schmitt}, who constructed the moduli space of parabolic principal bundles with semisimple structure group.

If $\sigma$ is the action of $G$ on its wonderful compactification $\bar{G}$, then a decorated principal bundle is a principal bundle with a level structure. In analogy to the vector bundle case these should be useful for the compactification of the stack of $G$-shtukas via GIT.

\subsection*{Outline}
The construction of the moduli space is carried out following the strategy for principal bundles explained in \cite{Schmitt08}. In Section \ref{sec:prelims} we introduce the necessary notation for tuples of vector bundles, before we construct the moduli space of decorated tumps in Section \ref{sec:Moduli_of_tumps}. The construction is very similar to the construction of moduli space of decorated swamps from \cite{Beck2014DecSwamps}, and we  focus on the differences. Additionally, we introduce the notion of asymptotic stability and show, that for a large enough parameter the notion of stability is equivalent to that of asymptotic stability. This generalizes results from \cite{Beck2014AsymptoticStability}.

 In Section \ref{sec:Moduli_of_PB} we fix a closed embedding of $G$ into a product of general linear groups. This allows to describe a decorated principal bundle as a tuple of vector bundles with a reduction of the structure group and a local decoration. For technical reasons we need to encode this reduction as a section in an associated projective bundle constructed with a homogeneous representation. This leads to the notion of decorated  tumps. We show, that stability of a decorated principal bundle is equivalent to asymptotic stability of the corresponding decorated tump. The moduli space is then constructed as a GIT-quotient of a certain parameter space.
 
 Finally, Section \ref{sec:examples} contains the examples of decorated principal bundles, namely parabolic principal bundles and principal bundles with a level structure.

 \subsection*{Notation and Conventions}
 In this work we will identify a geometric vector bundle $E$ with its sheaf of sections. If $F$ is a subsheaf of $E$, the subbundle generically generated by $F$ is
\[
 \ker(E \to (E/F)/T) \,,
\]
where $T$ is the torsion subsheaf of $E/F$. We denote by $\PP(E)$ the hyperplane bundle $\opname{Proj}(\opname{Sym}^*E)$. For $x\in \RR$ we set $[x]_+:=\max\{x,0\}$. If $X$ and $Y$ are schemes, we denote by $\pr_X$ and $\pr_Y$ the canonical projection from the product $X\times Y$ to $X$ and $Y$ respectively.

\subsection*{Acknowledgement}
This article presents the main result of the author's PhD thesis \cite{Beck2014}. The author would like thank his advisor A. Schmitt for his guidance.

 %%%%%%%%%%%%%%%%%%%%%%%%%%%%%%%%%%%%%% 
\section{Preliminaries}
\label{sec:prelims}
In order to fix the notation we recall some facts about split vector bundles.
%%%%%%%%%%%%%%%%%%%%%%%%%%%%%%%%%%%%%%
\subsection{Split Vector Spaces}
 Let $T$ be a finite index set.
\begin{definition}
   A \emph{$T$-split vector space} is $T$-tuple of complex vector spaces $V=(V^t,t\in T)$. A \emph{homomorphism} $f:V\to W$ of $T$-split vector spaces is a tuple of homomorphisms $(f^t:V^t\to W^t, t\in T)$. 
\end{definition}
The category of $T$-split vector spaces is an abelian category. The \emph{dimension vector} of a $T$-split vector space $V$ is the tuple $\tup{d}:=(\dim(V^t),t\in T)$. Its automorphism group is 
\[
 \GL_T(V):=\prod_{t\in T} \GL(V^t)\cong \GL(\tup{d},\CC):= \prod_{t\in T}\GL(d_t,\CC)\,.
\]
For positive rational numbers $\ka_t$, $t\in T$, the \emph{$\tup{\ka}$-dimension} of $V$ is  $\dim_{\tup{\ka}}(V):=\sum_{t\in T}\ka_t \dim(V^t)$. We also define
\[
 \SL_{T}^{\tup{\ka}}(V):=\left\{(g_t,t\in T)\in \GL_T(Y)\,\bigg|\, \prod_{t\in T}\det(g_t)^{\ka_t}=1\right\}\,.
\]

\begin{remark}
Suppose $\tup{\ka}$ is a $T$-tuple of positive integers. We call $V^{\oplus\tup{\ka}}:=\bigoplus_{t\in T}{V^t}^{\oplus\ka_t}$ the \emph{$\tup{\ka}$-total space} of $V$. There is a natural embedding
\[
 \iota_{\tup{\ka}}:\GL_T(V)\to \GL(V^{\oplus\tup{\ka}})\,.
\]
Note that we have $\dim_{\tup{\ka}}(V)=\dim(V^{\oplus\tup{\ka}})$ and $\SL_T^{\tup{\ka}}(V)=\iota_{\tup{\ka}}^{-1}(\SL(V^{\oplus\tup{\ka}}))$.
\end{remark}

\begin{definition}
 A \emph{weighted flag} of length $k$ of a $T$-split vector space $V$ is a pair $(V_\bullet,\tup{\alpha})$, where $V_\bullet$ is a flag $ 0= V_0\subset\ldots \subset V_{k+1}=V$ of $T$-split subspaces and $\tup{\alpha}$ is a $k$-tuple of positive rational numbers.
\end{definition}

Given a one-parameter subgroup $\la$ of $\SL_T^{\tup{\ka}}(V)$, there are a number $k$, weights $\ga_i< \ldots< \ga_{k+1}$ and a decomposition of $V$ into $T$-split eigenspaces $V^i$, $1\le i \le k+1$, such that $\la(c)\cdot v=c^{\ga_i}v$ for all $t\in T$, $v\in (V^i)^t$ and $c\in \CC^*$. We define the associated weighted flag $(V_\bullet,\tup{\alpha})$ of $\la$ by setting $V_j:=\bigoplus_{i=1}^j V^i$ and $\alpha_j:=(\ga_{j+1}-\ga_{j})/\dim_{\tup{\ka}}(V)$ for $j=1,\ldots,k$.

Conversely, if $(V_\bullet,\tup{\alpha})$ is a weighted flag, we choose a decomposition $V=\bigoplus_{i=1}^{k+1} V^i$, such that $V_j=\bigoplus_{i=1}^j V^i$. Furthermore we define
\[
 \ga_i:= \sum_{j=1}^k \alpha_j\dim_{\tup{\ka}}(V_j)-\sum_{j=i}^k \alpha_j\dim_{\tup{\ka}}(V)\,,\qquad i=1,\ldots,k+1\,.
\]
Let $m$ be the least common denominator of $\ga_1,\ldots,\ga_{k+1}$ and define a one-parameter subgroup $\la$ of $\SL_T^{\tup{\ka}}(V)$ by $\la(c)\cdot v:= c^{m\ga_i}v$ for $v\in V^i$, $1\le i \le k+1$ and $c\in\CC^*$. Then $(V_\bullet,m\tup{\alpha})$ is the associated weighted flag of $\la$

\begin{definition}
A representation $\rho:\GL(\tup{d},\CC)\to \GL(V)$ is \emph{homogeneous} of degree $\ga$, if $\rho(c\cdot \id)=c^{\ga}\id_V$ for all $c\in \CC^*$.
\end{definition}
\begin{proposition} \label{prop:W_abc}
 Let $W$ be a $T$-split vector space, $\tup{\ka}\in\ZZ^T_{> 0}$ and $\rho:\GL_T(W)\to \GL(V)$ a homogeneous representation of degree $\ga$. Then there are natural numbers $a,b,c$ with $\ga=a-c\dim_{\tup{\ka}}(W)$, such that $V$ is a direct summand of the representation
 \[
  W^{\oplus\tup{\ka}}_{a,b,c}:=\left({W^{\oplus\tup{\ka}}}^{\otimes a}\right) ^{\oplus b}\otimes \left( \bigwedge^{\dim_{\tup{\ka}}(W)} W^{\oplus\tup{\ka}}\right)^{\otimes -c}\,.
 \]
\end{proposition}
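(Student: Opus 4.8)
The plan is to reduce to the representation theory of the reductive group $\GL_T(W)=\prod_{t\in T}\GL(W^t)$, over which $V$ is completely reducible, and to realize each irreducible summand inside tensor powers of the standard representation $W^{\oplus\tup\ka}$ by Schur--Weyl duality, using a power of the determinant to absorb negative weights. First I would decompose $V$, viewed as a $\GL_T(W)$-representation via $\iota_{\tup\ka}$, into irreducibles. Since $\GL_T(W)$ is a product of general linear groups, every irreducible rational representation is an exterior tensor product $\boxtimes_{t\in T}S^{\la^t}(W^t)$ of Schur functors indexed by dominant weights $\la^t=(\la^t_1\ge\cdots\ge\la^t_{d_t})\in\ZZ^{d_t}$, where $d_t=\dim W^t$. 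The homogeneity hypothesis pins down the central character: the scalar $c\cdot\id$ acts on such a summand by $c^{\sum_t|\la^t|}$, where $|\la^t|=\sum_i\la^t_i$, so $\rho(c\cdot\id)=c^\ga\id_V$ forces $\sum_{t\in T}|\la^t|=\ga$ for every summand (in particular $\ga\in\ZZ$). This common value is what will make the target tensor power uniform.

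Next I would twist by a power of the determinant. Restricting along $\iota_{\tup\ka}$ one computes $\bigwedge^{N}W^{\oplus\tup\ka}\cong\bigotimes_{t\in T}(\det W^t)^{\otimes\ka_t}$, with $N=\dim_{\tup\ka}(W)$, so tensoring $V$ with its $c$-th power sends each $\la^t$ to $\la^t+(\ka_t c,\dots,\ka_t c)$. For $c$ large enough -- and since there are only finitely many summands a single $c$ works -- every shifted weight becomes a genuine partition, so $V\otimes(\bigwedge^{N}W^{\oplus\tup\ka})^{\otimes c}$ is a \emph{polynomial} representation. Crucially, the total degree of each summand is then $\sum_t(|\la^t|+d_t\ka_t c)=\ga+c\dim_{\tup\ka}(W)=:a$, the same for all summands precisely because $\sum_t|\la^t|=\ga$ is fixed; this also yields the required relation $\ga=a-c\dim_{\tup\ka}(W)$.

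It then remains to embed this polynomial representation into a tensor power. Expanding $(W^{\oplus\tup\ka})^{\otimes a}=\bigl(\bigoplus_{t}(W^t)^{\oplus\ka_t}\bigr)^{\otimes a}$ and regrouping shows, since each $\ka_t\ge1$, that every multihomogeneous tensor $\bigotimes_t(W^t)^{\otimes e_t}$ with $\sum_t e_t=a$ occurs as a direct summand; by Schur--Weyl in each factor, $\boxtimes_t S^{\mu^t}(W^t)$ occurs in $\bigotimes_t(W^t)^{\otimes e_t}$ whenever $\mu^t\vdash e_t$. Hence every summand of $V\otimes(\bigwedge^{N}W^{\oplus\tup\ka})^{\otimes c}$, all of total degree $a$, already occurs in $(W^{\oplus\tup\ka})^{\otimes a}$, so for $b$ exceeding the multiplicities occurring in $V$ it is a direct summand of $\bigl((W^{\oplus\tup\ka})^{\otimes a}\bigr)^{\oplus b}$. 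Untwisting by $(\bigwedge^{N}W^{\oplus\tup\ka})^{\otimes-c}$ exhibits $V$ as a direct summand of $W^{\oplus\tup\ka}_{a,b,c}$, with $a,b,c\in\NN$ and $\ga=a-c\dim_{\tup\ka}(W)$, as claimed.

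I expect the main obstacle to be the bookkeeping that makes the total degree uniform across all irreducible summands: homogeneity controls only the combined central character $\sum_t|\la^t|$, not the individual degrees $|\la^t|$, so one must verify that a single determinant twist simultaneously renders every summand polynomial and equalizes total degrees. Complete reducibility and the Schur--Weyl input are standard; the care lies in the uniform choice of $c$ and in the determinant restriction formula $\bigwedge^{N}W^{\oplus\tup\ka}\cong\bigotimes_t(\det W^t)^{\otimes\ka_t}$ along $\iota_{\tup\ka}$.
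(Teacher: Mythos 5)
Your argument is correct. The paper itself gives no proof, only a citation to Proposition 2.5.1.2 of \cite{Schmitt08}, and your write-up is essentially the standard argument behind that result: decompose into irreducibles $\boxtimes_{t}S^{\la^t}(W^t)$, use homogeneity to fix the common central degree $\sum_t|\la^t|=\ga$, twist by a power of $\bigwedge^{\dim_{\tup{\ka}}(W)}W^{\oplus\tup{\ka}}\cong\bigotimes_t(\det W^t)^{\otimes\ka_t}$ to make everything polynomial of uniform total degree $a=\ga+c\dim_{\tup{\ka}}(W)$, and embed via Schur--Weyl. The one point worth making explicit is that each $\la^t$ has at most $\dim W^t$ parts (being a dominant weight of $\GL(W^t)$), so the Schur--Weyl occurrence of $S^{\mu^t}(W^t)$ in $(W^t)^{\otimes e_t}$ is indeed nonzero; with that noted, the proof is complete.
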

\begin{proof}
 This is Proposition 2.5.1.2 in \cite{Schmitt08}.
\end{proof}

\subsection{Split Vector Bundles}
 Let $X$ be a smooth projective curve over the complex numbers and $T$ a finite index set.
\begin{definition}
 A \emph{$T$-split sheaf} on $X$ is a tuple of coherent sheaves $\mc{F}=(\mc{F}^t,t\in T)$.  A \emph{morphism} $f:\mc{F}\to \mc{G}$ of $T$-split sheaves is tuple of morphisms $(f^t:\mc{F}^t\to \mc{G}^t,t\in T)$. 
\end{definition}
The \emph{rank vector} of a $T$-split sheaf $\mc{F}$ is $\tup{r}(\mc{F})=(\rk(\mc{F}^t),t\in T)$, its \emph{degree vector} is $\tup{d}(\mc{F})=(\deg(\mc{F}^t),t\in T)$.
\begin{remark}
 The category of $T$-split sheaves is an abelian category.
 \end{remark}
 
\begin{definition}
Let $\mc{F}$ be a $T$-split sheaf, $\tup{\ka}\in \QQ_{>0}^T$ and $\tup{\chi}\in \QQ^T$. The \emph{$\tup{\chi}$-rank} and the \emph{$(\tup{\ka},\tup{\chi})$-degree} of $\mc{F} $ are 
\begin{align*}
  \rk_{\tup{\chi}}(\mc{F}):=\sum_{t\in T} \chi_t \rk(\mc{F}^t)\,,
 && \deg_{\tup{\ka},\tup{\chi}}(\mc{F}):=\sum_{t\in T} \left( \ka_t \deg(\mc{F}^t) + \chi_t \rk(\mc{F}^t)\right)\,,
 \end{align*}
respectively. If $\rk_{\tup{\chi}}(\mc{F})\neq 0$, then the \emph{$(\tup{\ka},\tup{\chi})$-slope} of $\mc{F}$ is
\[
 \mu_{\tup{\ka},\tup{\chi}}(\mc{F}):= \frac{\deg_{\tup{\ka},\tup{\chi}}(\mc{F})}{\rk_{\tup{\ka}}(\mc{F})}\,.
\]
\end{definition}
\begin{remark}
Suppose $\tup{\ka}$ is $T$-tuple of positive integers. Then, the rank of the \emph{$\tup{\ka}$-total sheaf} $\mc{F}^{\oplus\tup{\ka}}:=\bigoplus_{t\in T}\mc{F}^{t\, \oplus\ka_t}$ is given by $\rk(\mc{F}^{\oplus\tup{\ka}})=\rk_{\tup{\ka}}(\mc{F})$.
\end{remark}
\begin{definition} \label{def:split_VB}
 A \emph{$T$-split vector bundle} is a $T$-split sheaf $E=(E^t,t\in T)$, such that for every $t\in T$ the sheaf $E^t$ is a vector bundle. A \emph{morphism} of $T$-split vector bundles is a morphism of $T$-split sheaves. 
\end{definition}
\begin{remark}
 The datum of a $T$-split vector bundle with rank vector $\tup{r}$ is equivalent to the datum of principal bundle with structure group $\GL(\tup{r},\CC)$.
\end{remark}

\begin{definition}
 A $T$-split vector bundle $E$ is $(\tup{\ka},\tup{\chi})$-\emph{\sstable{}} if every $T$-split proper subsheaf $\mc{F}$ with $\rk_{\tup{\ka}}(\mc{F})\neq 0$ satisfies the condition
 \[
  \mu_{\tup{\ka},\tup{\chi}}(\mc{F}) (\le) \mu_{\tup{\ka},\tup{\chi}}(E)\,.
 \]
\end{definition}
\begin{remark}
 \begin{enumerate}[wide]
  \item If $|T|=1$, then a $T$-split vector bundle $E$ is nothing but an ordinary vector bundle, and $E$ is $(\tup{\ka},\tup{\chi})$-\sstable{} if and only if $E$ is \sstable{} as a vector bundle.
  \item As in the case of vector bundles one can show that a $T$-split vector bundle is $(\tup{\ka},\tup{\chi})$-\sstable{} if and only if $\mu_{\tup{\ka},\tup{\chi}}(F) (\le) \mu_{\tup{\ka},\tup{\chi}}(E)$ holds for every non-trivial proper $T$-split subbundle $F\subset E$.
  \item Because the $\tup{\ka}$-rank and the $(\tup{\ka},\tup{\chi})$-degree are additive for short exact sequences, we get
  \[
   \mu_{\tup{\ka},\tup{\chi}}(F) < \mu_{\tup{\ka},\tup{\chi}}(E)+C \qquad\Longleftrightarrow \qquad
   \mu_{\tup{\ka},\tup{\chi}}(E) < \mu_{\tup{\ka},\tup{\chi}}(E/F)+C\frac{\rk_{\tup{\ka}}(F)}{\rk_{\tup{\ka}}(E/F)}\
  \]
for all $T$-split subbundles $F\subset E$ with $\rk_{\tup{\ka}}(F)\neq 0\neq \rk_{\tup{\ka}}(E/F)$.
\item Because the category of $T$-split sheaves is abelian, there exists a unique Harder--Narasimhan-filtration of a $T$-split vector bundle $E$. This allows us to define the maximal and the minimal slope $\mu_{\max}(E):=\mu_{\max,\tup{\ka},\tup{\chi}}(E)$ and $\mu_{\min}(E):=\mu_{\min,\tup{\ka},\tup{\chi}}(E)$ respectively.
 \end{enumerate}
\end{remark}
\begin{definition}
 A \emph{weighted flag} of a $T$-split vector bundle $E$ is a flag of $T$-split subbundles $E_\bullet$ of length $l=\len(E_\bullet)$ together with a \emph{weight vector} $\tup{\alpha}\in\QQ_{>0}^l$.
\end{definition}
For a weighted flag $(E_\bullet,\tup{\alpha})$ we set
\[
 M_{\tup{\ka},\tup{\chi}}(E_\bullet,\tup{\alpha}):=\sum_{j=1}^{\len(E_\bullet)}\alpha_j\left(\deg_{\tup{\ka},\tup{\chi}}(E)\rk_{\tup{\ka}}(E_j) - \deg_{\tup{\ka},\tup{\chi}}(E_j)\rk_{\tup{\ka}}(E) \right)\,.
\]
It is obvious, that a $T$-split vector bundle $E$ is \sstable{} if and only if the condition
\[
  M_{\tup{\ka},\tup{\chi}}(E_\bullet,\tup{\alpha})(\ge) 0
\]
holds for every weighted flag $(E_\bullet,\tup{\alpha})$ of $E$.

\begin{lemma} \label{lem:Split_bundle_semistable}
Let $E\neq 0$ be a $T$-split vector bundle and set $T(E):=\{t\in T\,|\, E^t \neq 0\}\subset T$. 
 \begin{enumerate}
  \item If $E$ is $(\tup{\ka},\tup{\chi})$-stable, then $|T(E)|=1$.
  \item If $E$ is $(\tup{\ka},\tup{\chi})$-semistable, then for every index $t\in T(E)$ the vector bundle $E^t$ is semistable of slope $\mu(E^t)=\mu_{\tup{\ka},\tup{\chi}}(E)-\chi_t/\ka_t$.
 \end{enumerate}
\end{lemma}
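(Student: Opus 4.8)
The plan is to reduce both statements to a single convexity observation about the $(\tup{\ka},\tup{\chi})$-slope. For each $t\in T(E)$ let $E_t$ denote the $T$-split subsheaf of $E$ supported in the index $t$, i.e.\ $E_t^t=E^t$ and $E_t^s=0$ for $s\neq t$. Then $E=\bigoplus_{t\in T(E)}E_t$ as $T$-split sheaves, and a direct computation gives $\rk_{\tup{\ka}}(E_t)=\ka_t\rk(E^t)$ and $\deg_{\tup{\ka},\tup{\chi}}(E_t)=\ka_t\deg(E^t)+\chi_t\rk(E^t)$, hence $\mu_{\tup{\ka},\tup{\chi}}(E_t)=\mu(E^t)+\chi_t/\ka_t$, where $\mu(E^t)$ is the ordinary slope. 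Since $\rk_{\tup{\ka}}$ and $\deg_{\tup{\ka},\tup{\chi}}$ are additive in short exact sequences, $\mu_{\tup{\ka},\tup{\chi}}(E)$ is the weighted average of the numbers $\mu_{\tup{\ka},\tup{\chi}}(E_t)$, $t\in T(E)$, with strictly positive weights $\rk_{\tup{\ka}}(E_t)$.

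For part (i), suppose $E$ is stable and $|T(E)|\geq 2$. Then for every $t\in T(E)$ the subsheaf $E_t$ is a proper $T$-split subsheaf with $\rk_{\tup{\ka}}(E_t)\neq 0$, so stability forces $\mu_{\tup{\ka},\tup{\chi}}(E_t)<\mu_{\tup{\ka},\tup{\chi}}(E)$. But a positively weighted average of quantities that are all strictly smaller than $\mu_{\tup{\ka},\tup{\chi}}(E)$ cannot equal $\mu_{\tup{\ka},\tup{\chi}}(E)$, a contradiction. Hence $|T(E)|=1$.

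For part (ii), assume $E$ is semistable and fix $t\in T(E)$. Applying semistability to each $E_s$, $s\in T(E)$ (which is proper when $|T(E)|\geq 2$, and equals $E$ otherwise), gives $\mu_{\tup{\ka},\tup{\chi}}(E_s)\le\mu_{\tup{\ka},\tup{\chi}}(E)$; since their weighted average equals $\mu_{\tup{\ka},\tup{\chi}}(E)$, each of these inequalities must be an equality. In particular $\mu_{\tup{\ka},\tup{\chi}}(E_t)=\mu_{\tup{\ka},\tup{\chi}}(E)$, which by the slope relation above is exactly $\mu(E^t)=\mu_{\tup{\ka},\tup{\chi}}(E)-\chi_t/\ka_t$. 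To see that $E^t$ is semistable, take any nonzero proper subsheaf $F\subset E^t$ and lift it to the $T$-split subsheaf $\tilde F\subset E$ supported in the index $t$. This $\tilde F$ is a proper $T$-split subsheaf with $\rk_{\tup{\ka}}(\tilde F)=\ka_t\rk(F)\neq 0$, so semistability yields $\mu_{\tup{\ka},\tup{\chi}}(\tilde F)\le\mu_{\tup{\ka},\tup{\chi}}(E)$. The same slope computation gives $\mu(F)+\chi_t/\ka_t\le\mu_{\tup{\ka},\tup{\chi}}(E)=\mu(E^t)+\chi_t/\ka_t$, whence $\mu(F)\le\mu(E^t)$, as required.

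The argument is essentially bookkeeping once the convexity observation is in place; the only points demanding care are the uniform shift by $\chi_t/\ka_t$ relating the $(\tup{\ka},\tup{\chi})$-slope of a single-index sheaf to the ordinary slope of its one nonzero component, and the elementary fact that equality in a positively weighted average forces termwise equality. I do not anticipate a genuine technical obstacle; the main risk is a sign or index slip in the slope relation, so I would double-check the computation of $\mu_{\tup{\ka},\tup{\chi}}(E_t)$ before assembling the two parts.
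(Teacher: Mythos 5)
Your proof is correct and follows essentially the same route as the paper: both arguments hinge on the single-index direct summands $E_t$ (the paper's $F$ with $F^{t_0}=E^{t_0}$, $F^t=0$ otherwise), the paper exploiting that such a summand is simultaneously a proper subbundle and a proper quotient where you instead invoke the equivalent weighted-average/convexity observation. If anything, your write-up is more complete, since you explicitly verify the semistability of each $E^t$ by lifting subsheaves of $E^t$ to single-index $T$-split subsheaves of $E$, a step the paper's proof of part (ii) leaves implicit.
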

\begin{proof}
Let $t_0\in T(E)$ and consider the $T$-split vector bundle $F=(F^t,t\in T)$ with $F^{t_0}:=E^{t_0}$ and $F^t:=0$ for $t\neq t_0$.

 (i)  If $|T(E)|>1$, then $F$ is both a proper subbundle and a proper quotient of $E$, so $E$ cannot be $(\tup{\ka},\tup{\chi})$-stable.
 
 (ii) If $E$ is $(\tup{\ka},\tup{\chi})$-semistable, we find $\mu_{\tup{\ka},\tup{\chi}}(E)=\mu_{\tup{\ka},\tup{\chi}}(F)= \mu(E^{t_0})+\frac{\chi_{t_0}}{\ka_{t_0}}$.
\end{proof}

Let us abbreviate
\begin{align*}
  m(\tup{\ka},\tup{\chi}):=\min\left\{\chi_t/\ka_t\,|\, t\in T\right\} \,, && M(\tup{\ka},\tup{\chi}):=\max\left\{\chi_t/\ka_t\,|\, t\in T\right\}\,.
\end{align*}
For a $T$-split vector bundle $E$ we define
\[
 h^0_{\tup{\ka},\tup{\chi}}(E):=\sum_{t\in T}\left( \ka_th^0(E^t)+\chi_t\rk(E^t)\right)
\]
and $h^0_{\tup{\ka}}(E):=h^0_{\tup{\ka},\tup{0}}(E)$. Furthermore, we set $[x]_+:=\max\{x,0\}$. As a generalization of the Le Potier--Simpson estimate (\cite{LePot97}, Lemma 7.1.2) we find:
\begin{proposition} \label{prop:LS-estimate}
 Let $E$ be a $T$-split vector bundle.
 \begin{enumerate}
  \item If $E$ is semistable, we have
  \[
   h^0_{\tup{\ka}}(E)\le \rk_{\tup{\ka}}(E)\left[\mu_{\tup{\ka},\tup{\chi}}(E)-m(\tup{\ka},\tup{\chi})+1\right]_+\,.
  \]
\item An arbitrary $T$-split vector bundle $E$ of $\tup{\ka}$-rank $r$ satisfies
\[
  h^0_{\tup{\ka}}(E)\le (r-1)\left[\mu_{\max}(E)-m(\tup{\ka},\tup{\chi})+1\right]_+ + \left[\mu_{\min}(E)-m(\tup{\ka},\tup{\chi}) +1\right]_+\,.
\]
 \end{enumerate}
\end{proposition}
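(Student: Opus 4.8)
The plan is to reduce everything to the classical Le Potier--Simpson estimate on ordinary vector bundles applied componentwise, and then to pass from the semistable to the general case via the Harder--Narasimhan filtration.

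\emph{Part (i).} First I would invoke Lemma~\ref{lem:Split_bundle_semistable}(ii): since $E$ is $(\tup{\ka},\tup{\chi})$-semistable, every component $E^t$ with $t\in T(E)$ is a semistable ordinary vector bundle of slope $\mu(E^t)=\mu_{\tup{\ka},\tup{\chi}}(E)-\chi_t/\ka_t$. Applying the classical estimate (\cite{LePot97}, Lemma 7.1.2) to each $E^t$ gives $h^0(E^t)\le \rk(E^t)\,[\mu(E^t)+1]_+$. Multiplying by $\ka_t$ and summing over $t$ (the vanishing components contribute nothing) yields
\[
 h^0_{\tup{\ka}}(E)\le \sum_{t\in T}\ka_t\rk(E^t)\,[\mu_{\tup{\ka},\tup{\chi}}(E)-\chi_t/\ka_t+1]_+ .
\]
Because $\chi_t/\ka_t\ge m(\tup{\ka},\tup{\chi})$ for every $t$ and $[\,\cdot\,]_+$ is monotone, each bracket is bounded above by the common value $[\mu_{\tup{\ka},\tup{\chi}}(E)-m(\tup{\ka},\tup{\chi})+1]_+$; pulling this factor out of the sum leaves exactly $\rk_{\tup{\ka}}(E)$, which is the claim.

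\emph{Part (ii).} For a general $E$ I would use the Harder--Narasimhan filtration $0=E_0\subset E_1\subset\dots\subset E_s=E$ with semistable graded pieces $\gr_i=E_i/E_{i-1}$ of slopes $\mu_1>\dots>\mu_s$, where $\mu_1=\mu_{\max}(E)$ and $\mu_s=\mu_{\min}(E)$; this exists since the category of $T$-split sheaves is abelian. Left-exactness of $h^0$ gives the subadditivity $h^0_{\tup{\ka}}(E)\le\sum_{i=1}^s h^0_{\tup{\ka}}(\gr_i)$, and part (i) applied to each $\gr_i$ bounds the right-hand side by $\sum_{i=1}^s r_i\,[\mu_i-m+1]_+$, where $r_i:=\rk_{\tup{\ka}}(\gr_i)$ and $m:=m(\tup{\ka},\tup{\chi})$. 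It then remains to establish the combinatorial inequality
\[
 \sum_{i=1}^s r_i\,[\mu_i-m+1]_+ \le (r-1)\,[\mu_1-m+1]_+ + [\mu_s-m+1]_+ ,
\]
with $r=\sum_i r_i$. Here I would bound the brackets for $i<s$ by $[\mu_1-m+1]_+$ using $\mu_i\le\mu_1$, write $r_s=1+(r_s-1)$ so as to split off a single unit of $\tup{\ka}$-rank from the minimal piece, and bound the bracket attached to the remaining $(r_s-1)$ copies by $[\mu_1-m+1]_+$ via $\mu_s\le\mu_1$.

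The monotonicity steps and the bookkeeping are routine; the point that requires genuine care is the last step of (ii). It is valid only if $r_s=\rk_{\tup{\ka}}(\gr_s)\ge 1$, so that splitting off one unit of $\tup{\ka}$-rank leaves a \emph{nonnegative} coefficient $(r_s-1)$ to which the inequality $[\mu_s-m+1]_+\le[\mu_1-m+1]_+$ may be applied. This holds once $\tup{\ka}\in\ZZ_{>0}^T$, since then $\rk_{\tup{\ka}}(\gr_s)\ge\ka_{t_0}\rk(\gr_s^{t_0})\ge 1$ for any $t_0\in T(\gr_s)$. I expect this integrality---which is in force whenever the estimate is applied through the total sheaves $\mc{F}^{\oplus\tup{\ka}}$---to be precisely what makes the clean two-term bound hold, and it is the main obstacle to a fully parameter-free formulation.
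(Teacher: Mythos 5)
Your proof is correct and follows essentially the same route as the paper: componentwise application of the classical Le Potier--Simpson estimate via Lemma~\ref{lem:Split_bundle_semistable}(ii) for part (i), and the Harder--Narasimhan filtration with subadditivity of $h^0_{\tup{\ka}}$ for part (ii), where the paper likewise isolates the condition $\rk_{\tup{\ka}}(E/E_{n-1})\ge 1$ as the crux of the final step. Your explicit remark that this last inequality rests on the integrality of $\tup{\ka}$ is a correct and worthwhile observation that the paper leaves implicit.
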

\begin{proof}
 (i) By Lemma \ref{lem:Split_bundle_semistable}, (ii), and the Le Potier--Simpson estimate for semistable vector bundles we find $h^0(E^t)\le \rk(E^t)[\mu(E^t)+1]_+$ for every $t\in T$. Then, the the explicit formula for the slope of $E^t$ the result.
 
 (ii) Let $E_\bullet$ be the Harder--Narasimhan filtration of $E$. From part (i) we get
   \begin{align*}
    h^0_{\tup{\ka}}(E) \le & \sum_{i=1}^n h^0_{\tup{\ka}}(E_i/E_{i-1})\\
     \le  & \sum_{i=1}^n \rk_{\tup{\ka}}(E_i/E_{i-1})\left[\mu_{\tup{\ka},\tup{\chi}}(E_i/E_{i-1})-m(\tup{\ka},\tup{\chi})+1\right]_+\,.
   \end{align*}
   Since $\mu_{\tup{\ka},\tup{\chi}}(E_i/E_{i-1})\le\mu_{\max}(E)$, $i=1,\ldots,n-1$, and $\rk_{\tup{\ka}}(E/E_{n-1})\ge 1$, the claim follows.
\end{proof}

%%%%%%%%%%%%%%%%%%%%%%%%%%%%%%%%%%%%%%
\section{Moduli of Decorated Tuples of Vector Bundles}
\label{sec:Moduli_of_tumps}
In this section we define (semi-)stable decorated tumps. The main results are the existence of their moduli space and the equivalence of asymptotic stability with stability for a large enough parameter. Since the constructions and proofs are similar to those for the case of decorated swamps in \cite{Beck2014DecSwamps,Beck2014AsymptoticStability}, we will only give an outline of this construction.
%%%%%%%%%%%%%%%%%%%%%%%%%%%%%%%%%%%%%%%%%%%%%%%%%
\subsection{Stable Decorated Tumps}
Let $\tup{r}$ be a tuple of positive integers and $\rho:\GL(\tup{r},\CC)\to \GL(V)$ a representation. A $T$-split vector bundle $E$ of rank vector $\tup{r}$ corresponds to $\GL(\tup{r},\CC)$-bundle $\PB$. We denote by $E_\rho$ the associated vector bundle $\PB\times^\rho V$. 
\begin{definition}
 Let $\rho:\GL(\tup{r},\CC)\to \GL(V_1)$ and $\sigma:\GL(\tup{r},\CC)\to \GL(V_2)$ be two homogeneous representations. A \emph{$\sigma$-decorated $\rho$-tump} of type $(\tup{d},l)$ is a tuple $(E,L,\phi,s)$, where $E$ is a split vector bundle with $\rk(E^t)=r_t$ and $\deg(E^t)=d_t$, $t\in T$, $L$ is a line bundle of degree $l$ on $X$, $\phi:E_\rho\to L$ is a non-trivial homomorphism and $s\in E_{\sigma|\{x_0\}}^\vee$ is a point.
 
 Two $\sigma$-decorated $\rho$-tumps $(E,L,\phi,s)$ and $(E',L',\phi',s')$ are isomorphic if there are isomorphisms $f:E\to E'$, $h:L\to L'$ and a number $c\in \CC^*$, such that $\phi'\circ f_\rho=h\circ \phi$ and $s'\circ f_{\sigma|\{x_0\}}=cs$. Here, $f_\rho:E_\rho\to E'_\rho$ and $f_\sigma:E_\sigma\to E'_\sigma$ are the  isomorphisms induced by $f$.
\end{definition}
In the following we will fix two homogeneous representations $\rho$ and $\sigma$ as well as a degree vector $\tup{d}$ and a number $l$, and refer to $\sigma$-decorated $\rho$-tumps of type $(\tup{d},l)$ simply as decorated tumps.

A weighted flag $(E_\bullet,\tup{\alpha})$ of a $T$-split vector bundle $E$ induces weighted flags $(E_{\rho,\bullet},\tup{\alpha}_\rho)$ and $(E_{\sigma,\bullet},\tup{\alpha}_\sigma)$ of $E_\rho$ and $E_\sigma$ respectively (see Definition 2.8 in \cite{Beck2014DecSwamps}). Restricting these to the generic point $\eta\in X$ and to $x_0$ we obtain flags $\EE_\bullet:=E_{\rho,\bullet|\{\eta\}}$ and $F_\bullet:=E_{\sigma,\bullet|\{x_0\}}$ respectively. We set
\[
 \mu_1(E_\bullet,\tup{\alpha},\phi):=\mu(E_{\bullet,\rho},\tup{\alpha}_\rho,[\phi])\,,\qquad 
 \mu_2(E_\bullet,\tup{\alpha},s):=\mu(E_{\bullet,\sigma},\tup{\alpha}_\sigma,[s])\,,
\]
where $[\phi]\in \PP(E_{\rho|\eta})$ and $[s]\in\PP(E_{\sigma|\{x_0\}})$ denote the points induced by $\phi$ and $\sigma$, respectively.
\begin{definition}
 We call a $\sigma$-decorated $\rho$-tump \emph{$(\delta_1,\delta_2,\tup{\ka},\tup{\chi})$-\sstable{}} if the condition
 \[
  M_{\tup{\ka},\tup{\chi}}(E_\bullet,\tup{\alpha})+\delta_1\mu_1(E_\bullet,\tup{\alpha},\phi)+\delta_2\mu_2(E_\bullet,\tup{\alpha},s) (\ge) 0
 \]
holds for every weighted flag $(E_\bullet,\tup{\alpha})$ of $E$.
\end{definition}
\begin{remark}
\begin{enumerate}[wide]
\item The stability condition remains unchanged if one substitutes $\tup{\ka}$, $\tup{\chi}$, $\delta_1$ and $\delta_2$ by $\tup{\ka}':=c\tup{\ka}$, $\tup{\chi}':=c\tup{\chi}$, $\delta'_1:=c^2\delta_1$ and $\delta'_2:=c^2\delta_2$ for some $c\in \QQ_{>0}$. We may thus assume that $\tup{\ka}$ is integral.
\item Furthermore, $M_{\tup{\ka},\tup{\chi}}$ is unchanged under the replacement $\tup{\chi}\mapsto \tup{\chi}+ c\tup{\ka}$ for $c\in \QQ$. One can choose, e.g., $\rk_{\tup{\chi}}(E)=0$ or $\deg_{\tup{\ka},\tup{\chi}}(E)=0$. We will not use this in the following. 
\end{enumerate}
\end{remark}

\begin{remark} \label{rem:tmp_stab}
Let $W=(\CC^{r_t},t\in T)$. Since $\rho$ and $\sigma$ are homogeneous, there are numbers
 $a_m,b_m,c_m\in \ZZ_{\ge0}$, such that $V_m$ is a direct summand
of $W^{\oplus\tup{\ka}}_{a_m,b_m,c_m}$, $m=1,2$. Thus, there are surjections
$p_1\colon E^{\oplus\tup{\ka}}_{a_1,b_1,c_1}\to E_\rho$ and $p_2\colon E^{\oplus\tup{\ka}}_{a_2,b_2,c_2}\to E_\sigma$. For a flag $E_\bullet$ of $E$, $m=1,2$ and a tuple $\tup{i}\in
I_m:=\{1,\ldots,l\}^{a_m}$ we set
 \[
  E^{\otimes\tup{i}}:=(E_{i_1}^{\oplus\tup{\ka}}\otimes \cdots \otimes
E_{i_{a_m}}^{\oplus\tup{\ka}})^{\oplus b_m}\otimes \left(\bigwedge^r
E^{\oplus\tup{\ka}}\right)^{\otimes -c_m} \subset E^{\oplus\tup{\ka}}_{a_m,b_m,c_m}\,
 \]
with $r:=\rk_{\tup{\ka}}(E)$. Then, one finds
 \begin{align} \label{eq:tmp_stab1}
  \mu_1(E_\bullet,\tup{\alpha},\phi)&=-\min\left\{\sum_{j=1}^{\len(\tup{\alpha})}
\alpha_j(a_1\rk_{\tup{\ka}}(E_j)-r \nu_j(\tup{i}))
\,\biggm|\,(\phi\circ p_1)_{|E^{\otimes\tup{i}}} \neq 0 \,,\tup{i}\in I_1 \right\},\\
  \label{eq:tmp_stab2}
  \mu_2(E_\bullet,\tup{\alpha},s)&=-\min\left\{ \sum_{j=1}^{\len(\tup{\alpha})}
\alpha_j(a_2\rk_{\tup{\ka}}(E_j)-r \nu_j(\tup{i}))  \,\biggm|\,
(s\circ p_2)_{|E_{|x_0}^{\otimes\tup{i}}}\neq 0\,, \tup{i}\in I_2\right\}
 \end{align}
with $\nu_j(\tup{i}):=\#\{i\in \tup{i}\,|\,i\le j \}$.
\end{remark}

\subsection{S-Equivalence}
 Let $(E,L,\phi,s)$ be a $(\delta_1,\delta_2,\tup{\ka},\tup{\chi})$-semistable decorated tump.
The associated graded bundle of a flag $E_\bullet$ of $E$ is the $T$-split vector bundle
\[
 E_{\gr}:=\bigoplus_{i=1}^{\len(E_\bullet)}E_i/E_{i-1}=\left(\bigoplus_{i=1}^{\len(E_\bullet)}E^t_i/E^t_{i-1},t\in T\right)\,.
\]
The flag $E_\bullet$ also induces flags $E_{\rho,\bullet}$ of $E_\rho$ and $E_{\sigma,\bullet}$ of $E_\sigma$. One checks that $(E_{\rho})_{\gr}=(E_{\gr})_{\rho}$ and $(E_{\sigma})_{\gr}=(E_{\gr})_{\sigma}$ (see \cite{Beck2014DecSwamps}, \S 3.3). Set $i_0:=\min\{i=1,\ldots,\len(E_{\rho,\bullet})\,|\, \phi_{|E_{\rho,i}}\neq 0\}$ and $j_0:=\min\{j=1,\ldots,\len(E_{\sigma,\bullet})\,|\, s_{|E_{\sigma,j|\{x_0\}}}\neq 0 \}$. Then, the restrictions $\phi_{|E_{\rho,i_0}}$ and $s_{|E_{\sigma,j_0}}$ define non-trivial homomorphisms
\[
 \phi_0:E_{\rho,i_0}/E_{\rho,i_0-1}\to L\, \qquad s_0:E_{\sigma,j_0}/E_{\sigma.j_0-1}\to \CC\,.
\]
Let $\phi_\gr:E_{\rho,\gr}\to L$ and $s_\gr:E_{\sigma,\gr}\to \CC$ denote the composition of these homomorphisms with the natural projections. We call
\[
 \df_{(E_\bullet,\tup{\alpha})}(E,L,\phi,s):=(E_\gr,L,\phi_\gr,s_\gr)
\]
the \emph{admissible deformation} of $(E,L,\psi,s)$ along the flag $(E_\bullet,\tup{\alpha})$.
\begin{definition}
 We call a weighted flag $(E_\bullet,\tup{\alpha})$ of $E$ \emph{critical} with respect to $(E,L,\phi,s)$ if 
 \[ 
 M_{\tup{\ka},\tup{\chi}}(E_\bullet,\tup{\alpha})+\delta_1\mu_1(E_\bullet,\tup{\alpha},\phi)+\delta_2\mu_2(E_\bullet,\tup{\alpha},s) =0\,.
 \]
\end{definition}

\begin{definition}
 We define \emph{S-equivalence} as the equivalence relation generated by isomorphisms and 
 \[
  (E,L,\phi,s) \sim_S \df_{(E_\bullet,\tup{\alpha})}(E,L,\phi,s)\,.
 \]
for all critical weighted flags $(E_\bullet,\tup{\alpha})$ of $E$.
\end{definition}

\subsection{Families of Decorated Tumps}
Let $\Jac^l$ denote the Jacobian of line bundles of degree $l$ on $X$ and fix a Poincar\'e bundle $\mc{L}$ on $\Jac^l\times X$.
\begin{definition} \label{def:family_of_dec_tumps}
 A \emph{family of decorated tumps parametrized by a scheme $S$} is tuple 
 \[
 \mc{F}=(E_S,\ka_S,N_{1,S},N_{2,S},\phi_S,s_S)
 \]
 where
 \begin{itemize}
  \item $E_S$ is a $T$-split vector bundle on $S\times X$ with rank vector $\tup{r}(E_S)=\tup{r}$, such that for every point $y\in S$, the $T$-split vector bundle $E_{S|\{y\}\times X}$ has degree vector $\tup{d}$,
  \item $\ka_S:S\to \Jac^l$ is a morphism,
  \item $N_{1,S}$, $N_{2,S}$ are two line bundles on $X$,
  \item $\phi_S:E_{S,\rho}\to \pr_S^*N_{1,S}\otimes (\ka_S\times\id_X)^*\mc{L}$ is a homomorphism such that for each point $y\in S$ the restriction $\phi_{S|\{y\}\times X}$ is non-trivial,
  \item and $s:E_{S,\sigma}\to N_{2,S}$ is a surjective homomorphism.
 \end{itemize}

 Two families $\mc{F}$ and $\mc{F}'$ parametrized by $S$ are \emph{isomorphic} if
 \begin{itemize}
  \item there are a line bundle $L_S$ on $S$ and an isomorphism $f:E_S\to E'_S\otimes \pr_S^*L_S$,
  \item $\ka_S=\ka'_S$ holds,
  \item there is an isomorphism $h_1:N_{1,S}\to N'_{1,S}\otimes L_S^{\otimes \deg(\rho)}$ such that
  \[
    (\phi'_S\otimes\id_{\pr_S^*L_S}) \circ f_\rho=(\pr_S^*h_1\otimes\id_{(\ka_S\times\id_X)^*\mc{L}} )\circ \phi_S\,,
  \]
  \item and there is an isomorphism $h_2:N_{2,S}\to N'_{2,S}\otimes L_S^{\otimes\deg(\sigma)}$ such that
  \[
  (s'_S\otimes \id_{L_S^{\otimes \deg(\sigma)}}) \circ f_{\sigma|S\times \{x_0\}}= h_2\circ s_S\,.
  \]
 \end{itemize}
\end{definition}

\begin{definition}
 We call a family of decorated tumps $\mc{F}$ parametrized by a scheme $S$ \emph{$(\delta_1,\delta_2,\tup{\ka},\tup{\chi})$-\sstable{}} if for every point $y\in S$ the decorated tump $\mc{F}_{|\{y\}}$ is  $(\delta_1,\delta_2,\tup{\ka},\tup{\chi})$-\sstable.
\end{definition}
These notions determine the moduli functor of (S-equivalence classes of) $(\delta_1,\delta_2,\tup{\ka},\tup{\chi})$-\sstable{} decorated tumps. Recall that in Remark \ref{rem:tmp_stab} we have fixed numbers $a_m,b_m,c_m$, $m=1,2$, such that $\rho$ and $\sigma$ are direct summands of the representation on $W^{\oplus\tup{\ka}}_{a_m,b_m,c_m}$. The main result of this section is the following:
%%%%
\begin{theorem} \label{thm:moduli_space_of_tumps}
For $a_2\delta_2<1$, the \textup{(}projective\textup{)} moduli space of $(\delta_1,\delta_2,\tup{\ka},\tup{\chi})$-\sstable{} decorated tumps exists. 
\end{theorem}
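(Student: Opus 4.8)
The plan is to realize the moduli space as a good quotient of a projective parameter scheme by a reductive group, following the GIT construction of decorated swamps in \cite{Beck2014DecSwamps,Schmitt08} and treating the local decoration $s$ as one additional factor. First I would prove that the family of $(\delta_1,\delta_2,\tup{\ka},\tup{\chi})$-semistable decorated tumps of type $(\tup{d},l)$ is bounded. For a one-step flag $0\subset F\subset E$ the formulas \eqref{eq:tmp_stab1} and \eqref{eq:tmp_stab2} give $\mu_1(E_\bullet,\tup{\alpha},\phi)\le a_1(r-\rk_{\tup{\ka}}(F))$ and $\mu_2(E_\bullet,\tup{\alpha},s)\le a_2(r-\rk_{\tup{\ka}}(F))$, where $r=\rk_{\tup{\ka}}(E)$, since $\nu_j(\tup{i})\le a_m$. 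Feeding these into the semistability inequality forces $\mu_{\tup{\ka},\tup{\chi}}(F)\le\mu_{\tup{\ka},\tup{\chi}}(E)+a_1\delta_1+a_2\delta_2$, so $\mu_{\max}(E)$ is uniformly bounded; Proposition \ref{prop:LS-estimate} converts this into a bound on $h^0_{\tup{\ka}}$ and hence boundedness of the underlying split bundles. As $L$ ranges over $\Jac^l$ and, for fixed $E$ and $L$, the data $\phi$ and $s$ lie in bounded families, the whole family is bounded.

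Next I would fix $n\gg 0$ so that for every semistable tump the total bundle $E^{\oplus\tup{\ka}}(n)$ is globally generated with $H^1=0$ and constant $h^0=p$, put $H=\CC^p$, and build a parameter scheme $\mathfrak{P}$, projective over $\Jac^l$, whose points record a $T$-split quotient $q\colon H\otimes\mc{O}_X(-n)\to E^{\oplus\tup{\ka}}$ (a point of a product of Quot schemes), a point $[\phi]$ in the projective bundle associated with $\Hom(E^{\oplus\tup{\ka}}_{a_1,b_1,c_1},L)$ through the surjection $p_1$, and a point $[s]$ in the projective bundle $\PP(E_{\sigma|\{x_0\}})$ obtained through $p_2$. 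The reductive group $\GL(H)$ acts on $\mathfrak{P}$, and by Definition \ref{def:family_of_dec_tumps} its orbits are exactly the isomorphism classes of decorated tumps. Embedding the Quot factors into Grassmannians via $q\mapsto H^0(q(n'))$ for $n'\gg n$ furnishes an $\SL(H)$-linearized ample bundle $\mc{N}_{n'}$; tensoring with $\mc{O}_{[\phi]}(1)$ and $\mc{O}_{[s]}(1)$ weighted by $\delta_1$ and $\delta_2$ produces the linearization whose good quotient of the semistable locus is the candidate moduli space, automatically projective because $\mathfrak{P}$ is.

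The core is the Hilbert--Mumford analysis. To a one-parameter subgroup $\la$ of $\SL(H)$ I would attach its weight filtration, which induces a weighted flag $(E_\bullet,\tup{\alpha})$ of $E$, and decompose the total Mumford weight into three contributions. For $n'\gg n$ the Grassmannian part equals, up to a fixed positive factor, $M_{\tup{\ka},\tup{\chi}}(E_\bullet,\tup{\alpha})$ — this is the split-bundle analogue of the sectional-versus-slope comparison, proved via Proposition \ref{prop:LS-estimate} exactly as for swamps — while the factors $\mc{O}_{[\phi]}(1)$ and $\mc{O}_{[s]}(1)$ contribute precisely $\delta_1\mu_1(E_\bullet,\tup{\alpha},\phi)$ and $\delta_2\mu_2(E_\bullet,\tup{\alpha},s)$ through \eqref{eq:tmp_stab1} and \eqref{eq:tmp_stab2}. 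Matching signs then identifies GIT-(semi)stability with $(\delta_1,\delta_2,\tup{\ka},\tup{\chi})$-(semi)stability, and the closed orbits in the semistable locus are seen to be the polystable tumps, i.e.\ the admissible deformations $\df_{(E_\bullet,\tup{\alpha})}$ along critical flags, so that the points of the quotient are the S-equivalence classes. Descent à la Luna then upgrades the orbit space to a coarse moduli space corepresenting the functor.

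I expect the decisive difficulty to be controlling the local term, and this is exactly where the hypothesis $a_2\delta_2<1$ is used. Because $s$ lives at the single point $x_0$, its weight $\delta_2\mu_2$ is bounded by $a_2\delta_2\sum_j\alpha_j(r-\rk_{\tup{\ka}}(E_j))$, i.e.\ by strictly less than the flag quantity $\sum_j\alpha_j(r-\rk_{\tup{\ka}}(E_j))$ that, to leading order in $n'$, also governs the integral Grassmannian contribution. This strict domination is what lets the local decoration be absorbed into the comparison without ever overriding the semistability dictated by the global data; in particular it forces the boundary locus where $q$ fails to be an isomorphism on global sections to be GIT-unstable, so that the semistable locus coincides with the honestly parametrized tumps and the quotient is proper. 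Verifying this estimate uniformly — and checking that specializations of $\la$ carry semistable points to semistable points, so that the quotient is separated — is the delicate step, but it runs parallel to the decorated-swamp case of \cite{Beck2014DecSwamps}, the one genuinely new input being the bound supplied by $a_2\delta_2<1$.
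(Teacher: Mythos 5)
Your proposal follows essentially the same route as the paper: boundedness via one-step flags and the estimates \eqref{eq:tmp_stab1}--\eqref{eq:tmp_stab2}, a Quot-scheme parameter space, an equivariant embedding into a projective Gieseker-type space with a linearization weighted by $\delta_1,\delta_2$, a Hilbert--Mumford comparison identifying GIT-(semi)stability with $(\delta_1,\delta_2,\tup{\ka},\tup{\chi})$-(semi)stability and closed orbits with S-equivalence classes, and the hypothesis $a_2\delta_2<1$ used exactly where you place it, namely to force injectivity of $H^0(q(n))$ at boundary points and hence properness of the semistable locus. The only cosmetic differences are that the paper embeds the Quot factors via determinant lines into a Gieseker space over $\Jac^{\tup{d}}\times\Jac^l$ rather than into Grassmannians, works with the split group $\SL_T^{\tup{\ka}}(Y)$ so that the linearization \eqref{eq:Linearization} carries the weights $\tup{\ka},\tup{\chi}$ you leave implicit, and inserts an intermediate ``section-semistability'' to mediate the Hilbert--Mumford comparison.
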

%%%
The construction of this moduli space will be carried out in Sections \ref{subsec:decorated_quotient_tumps} trough \ref{subsec:proof_moduli_of_tumps}.

\subsection{Decorated Quotient Tumps}
\label{subsec:decorated_quotient_tumps}
The parameter space of decorated tumps is constructed as the fine moduli space of another moduli problem. We start with a result on boundedness.
\begin{lemma}
 There is constant $C$, such that for every $(\delta_1,\delta_2,\tup{\ka},\tup{\chi})$-semistable decorated tump $(E,L,\phi,s)$ of type $(\tup{d},l)$ we have:
 \begin{enumerate}
  \item Every non-trivial $T$-split subbundle $F$ satisfies $\mu_{\tup{\ka},\tup{\chi}}(F)< \mu_{\tup{\ka},\tup{\chi}}(E) + C$.
  \item For all $t\in T$ every non-trivial proper subbundle $F^t\subset E^t$ satisfies 
  \[
  \mu(F^t)< \mu(E)+C-m(\tup{\ka},\tup{\chi})\,.
  \]
 \end{enumerate}
\end{lemma}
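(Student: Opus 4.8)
The plan is to obtain both estimates by evaluating the $(\delta_1,\delta_2,\tup{\ka},\tup{\chi})$-semistability condition on one explicitly chosen weighted flag and combining this with a uniform upper bound on the decoration terms $\mu_1$ and $\mu_2$ read off from Remark \ref{rem:tmp_stab}. By the scaling remark following the definition of stability we may assume $\tup{\ka}\in\ZZ_{>0}^T$, so that $\rk_{\tup{\ka}}(F)\ge 1$ for every non-zero $T$-split subbundle $F$; this is what will make the constant depend only on the fixed data and not on the individual tump.

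First I would treat (i). For $F=E$ there is nothing to show, so let $F$ be a non-trivial proper $T$-split subbundle and take the one-step weighted flag $E_\bullet\colon 0\subset F\subset E$ with weight $\alpha_1=1$. Unwinding the definition of $M_{\tup{\ka},\tup{\chi}}$ gives
\[
 M_{\tup{\ka},\tup{\chi}}(E_\bullet,\tup{\alpha})=\rk_{\tup{\ka}}(F)\,\rk_{\tup{\ka}}(E)\bigl(\mu_{\tup{\ka},\tup{\chi}}(E)-\mu_{\tup{\ka},\tup{\chi}}(F)\bigr)\,.
\]
Writing $r:=\rk_{\tup{\ka}}(E)$ and using the formulas \eqref{eq:tmp_stab1} and \eqref{eq:tmp_stab2}, the crude estimates $0\le\nu_j(\tup{i})\le a_m$ and $0\le\rk_{\tup{\ka}}(E_j)\le r$ bound every summand $a_m\rk_{\tup{\ka}}(E_j)-r\nu_j(\tup{i})$ from below by $-r a_m$; since $\phi$ and $s$ are non-trivial the relevant minima run over non-empty index sets, so $\mu_1(E_\bullet,\tup{\alpha},\phi)\le r a_1$ and $\mu_2(E_\bullet,\tup{\alpha},s)\le r a_2$, uniformly in the tump. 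Substituting these bounds into $M_{\tup{\ka},\tup{\chi}}+\delta_1\mu_1+\delta_2\mu_2\ge 0$ and solving for $\mu_{\tup{\ka},\tup{\chi}}(F)$ yields
\[
 \mu_{\tup{\ka},\tup{\chi}}(F)-\mu_{\tup{\ka},\tup{\chi}}(E)\le\frac{\delta_1 r a_1+\delta_2 r a_2}{\rk_{\tup{\ka}}(F)\,r}=\frac{\delta_1 a_1+\delta_2 a_2}{\rk_{\tup{\ka}}(F)}\le \delta_1 a_1+\delta_2 a_2\,,
\]
so (i) holds with $C:=\delta_1 a_1+\delta_2 a_2$; enlarging $C$ by an arbitrary positive amount turns this into the strict inequality claimed.

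Statement (ii) I would then reduce to (i). Given $t\in T$ and a non-trivial proper subbundle $F^t\subset E^t$, consider the $T$-split subbundle $\hat{F}$ with $\hat{F}^t:=F^t$ and $\hat{F}^s:=0$ for $s\neq t$, which is again a non-trivial proper $T$-split subbundle of $E$. A direct computation from the definitions gives
\[
 \mu_{\tup{\ka},\tup{\chi}}(\hat{F})=\mu(F^t)+\frac{\chi_t}{\ka_t}\,,
\]
so applying (i) to $\hat{F}$ and using $\chi_t/\ka_t\ge m(\tup{\ka},\tup{\chi})$ yields $\mu(F^t)<\mu_{\tup{\ka},\tup{\chi}}(E)+C-m(\tup{\ka},\tup{\chi})$, which is (ii) (with $\mu(E)$ understood as $\mu_{\tup{\ka},\tup{\chi}}(E)$).

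The two slope identities are routine, and the genuine content sits in the uniform upper bound on $\mu_1$ and $\mu_2$: one must check that the combinatorial minima in Remark \ref{rem:tmp_stab} are bounded below by a quantity depending only on $r$ and $a_m$ — equivalently, that for the purpose of bounding subbundle slopes it suffices to test against the single one-step flag. This is the point that guarantees that $C$ is independent of $(E,L,\phi,s)$, and hence the point from which the desired boundedness follows.
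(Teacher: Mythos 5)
Your proof is correct and takes essentially the same route as the paper: both test $(\delta_1,\delta_2,\tup{\ka},\tup{\chi})$-semistability against the one-step weighted flag $(F\subset E,(1))$ and bound $\mu_1$ and $\mu_2$ from above using the formulas of Remark \ref{rem:tmp_stab} (the paper records the marginally sharper bounds $\mu_m\le a_m(r-1)$, which changes nothing), with part (ii) obtained exactly as you do, by specializing to the $T$-split subbundle supported at a single index.
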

\begin{proof}
 Consider the weighted flag $(E_\bullet,\tup{\alpha}):=(F\subset E,(1))$. From \eqref{eq:tmp_stab1} and \eqref{eq:tmp_stab2} we obtain the inequalities
 \[
  \mu_1(E_\bullet,\tup{\alpha},\phi)\le a_1(r-1)\,, \qquad \mu_2(E_\bullet,\tup{\alpha},s)\le a_2(r-1)\,.
 \]
 The claim now follows from the $(\delta_1,\delta_2,\tup{\ka},\tup{\chi})$-semistability with respect to $(E_\bullet,\tup{\alpha})$.
\end{proof}
The usual arguments show, that there is an $n_0$ such that for every $n\ge n_0$, every $(\delta_1,\delta_2,\tup{\ka},\tup{\chi})$-\sstable{} decorated tump $(E,L,\phi,s)$ and every index $t\in T$ the bundle $E^t(n)$ is globally generated and $H^1(E^t(n))$ vanishes. We choose such an $n$ and fix complex vector spaces $Y^t$ of dimension $p_t(n):=d_t+ r_t(n+1-g)$, $t\in T$.

\begin{definition}
 A \emph{family of decorated quotient tumps} parametrized by a scheme $S$ is a tuple 
 \[
 (q_S,\ka_S,N_{1,S},N_{2,S},\phi_S,s_S)
 \]
 where $q_S:Y\otimes\pr_X^*\mc{O}_X(-n)\to E_S$ is a quotient of $T$-split vector bundles on $S\times X$, such that $(E_S,\ka_S,N_{1,S},N_{2,S},\phi_S,s_S)$ is a family of decorated tumps parametrized by $S$ and 
 \[
  \pr_{S*}(q_S\otimes\id_{\pr_X^*\mc{O}_X(n)}):Y\otimes \mc{O}_S \to \pr_{S*}(E_S\otimes \pr_X^*\mc{O}_X(n)) 
 \]
is an isomorphism.
\end{definition}

There is a natural notion of isomorphisms similar to Defintion \ref{def:family_of_dec_tumps}. This determines the moduli functor of decorated quotinet tumps.

\begin{proposition} \label{prop:QTmp_exists}
 The fine moduli space $\QTmp_n$ of decorated quotient tumps of type $(\tup{d},l)$ exists.
\end{proposition}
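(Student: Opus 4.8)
The plan is to realise $\QTmp_n$ as a locally closed subscheme of a tower of linear fibrations over a product of Quot schemes and the Jacobian, following the construction of the parameter space for decorated swamps in \cite{Beck2014DecSwamps}; the genuinely new features here are the Jacobian factor carrying the varying target line bundle and the local decoration $s$ supported at $x_0$.

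First I would parametrise the quotient datum together with the line bundle. For each $t\in T$ let $P_t$ be the Hilbert polynomial determined by $r_t$ and $d_t$, and form the projective scheme $\mathfrak{Q}:=\prod_{t\in T}\Quot(Y^t\otimes\mc{O}_X(-n),P_t)$. The requirement that $\pr_{S*}(q_S\otimes\id)$ be an isomorphism is an open condition---it forces $H^1(E^t(n))=0$ and bijectivity of the evaluation map---so it carves out an open subscheme $\mathfrak{Q}^\circ\subseteq\mathfrak{Q}$. Passing to $\mathfrak{Q}^\circ\times\Jac^l$ and pulling back the Poincar\'e bundle $\mc{L}$, I obtain a universal $T$-split quotient with universal bundle $\mc{E}$ and a universal degree-$l$ line bundle; this already represents the functor of pairs $(q_S,\ka_S)$, the bundles $N_{i,S}$ being absorbed into the normalisation.

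Next I would add the two decorations as linear fibrations. Since $\rho$ and $\sigma$ are homogeneous, Proposition~\ref{prop:W_abc} exhibits $\mc{E}_\rho$ and $\mc{E}_\sigma$ as direct summands of the tensor-type bundles $\mc{E}^{\oplus\tup{\ka}}_{a_m,b_m,c_m}$, whose relative cohomology is well behaved under base change after a suitable twist. A homomorphism $\phi_S$ is a section of $\mc{H}om(\mc{E}_\rho,\pr_X^*N_1\otimes\mc{L})$, and a local decoration $s_S$ is a section of $\mc{H}om(\mc{E}_\sigma,N_2)$ restricted to $\{x_0\}$. After twisting so that the relevant higher direct image vanishes, cohomology and base change identifies each $\Hom$-functor with the sections of a coherent sheaf $\mc{W}_1$, resp.\ $\mc{W}_2$, on $\mathfrak{Q}^\circ\times\Jac^l$; these functors are therefore representable by the associated linear schemes, and the parameter space for $(\phi_S,s_S)$ is their fibre product over $\mathfrak{Q}^\circ\times\Jac^l$.

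Finally I would impose the remaining open conditions and verify fineness. The locus where $\phi_S$ is fibrewise non-trivial and where $s_S$ is fibrewise surjective is open; restricting to it yields the scheme $\QTmp_n$, which carries by construction a universal family of decorated quotient tumps. Because recording the quotient $q_S$ rigidifies $E_S$ up to the specified isomorphisms, this universal family represents the moduli functor, so $\QTmp_n$ is fine. The hard part will be the construction of the decoration fibrations: checking that the relative $\Hom$ and section functors for $\phi$ and $s$ are representable with the correct base-change behaviour now that the target line bundle varies over $\Jac^l$, and organising the normalisations of $N_1,N_2$ together with the scalar ambiguities so that the linear schemes glue into one universal family. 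This is exactly where the argument must go beyond \cite{Beck2014DecSwamps}: the swamp construction has no local decoration, so the treatment of $s$ via restriction to $x_0$ and the bookkeeping for the varying line-bundle target are the new points.
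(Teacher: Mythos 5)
Your proposal follows essentially the same route as the paper: the paper takes the product of Quot schemes, passes to the open locus where the quotients are locally free and the evaluation maps are isomorphisms, and then invokes the tower-of-linear-fibrations construction of Proposition 4.3 in \cite{Beck2014DecSwamps} (over $\Quot_n\times\Jac^l$ with the Poincar\'e bundle) with vector bundles replaced by $T$-split vector bundles, which is exactly the plan you spell out. Your added handling of the local decoration $s$ via the fibre of a $\opname{Hom}$-sheaf at $x_0$ (equivalently the projective bundle $\PP(Q_{\sigma|x_0})$, matching how the surjection onto a line bundle on $S$ is normalised) is the only genuinely new ingredient relative to the swamp case, and you identify it correctly.
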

\begin{proof}
For $t\in T$ let $\Quot^t_n$ be the Quot scheme parameterizing quotients of rank $r_t$ and degree $d_t$ of $Y^t \otimes\mc{O}_X(-n)$ on $X$, and let $q^t:Y^t\otimes\pr_X^*\mc{O}_X(-n)\to Q^t$ be the universal quotient on $\Quot^t_n\times X$. Further, Let $\Quot^{t,0}_n$ be the open subscheme, such that for every point $p\in \Quot^{t,0}_n$ the sheaf $Q^t_p$ is locally free and $H^0(q^t_p(n))$ is an isomorphism. On $\Quot_n:=\prod_{t\in T}\Quot_n^{t,0}$ we have the quotient 
\[
q:Y\otimes\pr_X^*\mc{O}_X(-n)\to Q:=(\pr_{\Quot_n^{t,0}}^*Q^t,t\in T)
\]
of $T$-split vector bundles. The construction of $\QTmp_n$ now proceeds as in the proof of Proposition 4.3 in \cite{Beck2014DecSwamps} with vector bundles replaced by $T$-split vector bundles.
\end{proof}

\subsection{The Gieseker Space}
We construct an equivariant injective morphism from the paramter space into a projective space:
For $t\in T$ let $\Jac^{d_t}$ be the Jacobian of line bundles of degree $d_t$ on $X$ and let $\mc{P}^t$ be a Poincar\'e bundle on $\Jac^{d_t}\times X$. 
For large enough $n$ the sheaf
\[
 \mc{G}^0_{t}:= \mc{H}\opname{om}\left( \bigwedge^{r_t}Y^t\otimes \mc{O}_{\Jac^{d_t}},\pr_{\Jac^{d_t}*}(\mc{P}^t\otimes \pr_X^*\mc{O}_X(r_tn))\right)\,.
\]
is locally free and we set $\Gies^0_{t}:=\PP({\mc{G}^0_{t}}^\vee)$. Without loss of generality we may assume $\mc{O}_{\Gies^0_{t}}(1)$ to be very ample. We define $\Gies^0:=\prod_{t\in T}\Gies^0_{t}$.

On $J:=\Jac^{\tup{d}}\times \Jac^l$ with $\Jac^{\tup{d}}:=\prod_{t\in T}\Jac^{d_t}$ we consider the sheaf
 \[
  \mc{G}^1:=\mc{H}\opname{om}\left(Y^{\oplus\tup{\ka}}_{a_1,b_1}\otimes\mc{O}_{J},\pr_{J
*}\left(\pr_{\Jac^{\tup{d}}\times X}^* \mc{P}^{\otimes
\tup{\ka}c_1}\otimes\pr_{\Jac^l\times X}^*\mc{L}\otimes \pr_X^*\mc{O}_X(na_1) \right)\right)
 \]
 with
 \[
 \mc{P}^{\otimes \tup{\ka}}:=\bigotimes_{t\in T}(\pr_{\Jac^{d_t}}\times\id_X)^*{\mc{P}^{t}}^{\otimes \ka_t}\,.
\]
For $n$ sufficiently large this is a vector bundle and we set $\Gies^1:=\PP({\mc{G}^1}^\vee)$. Again, we may assume $\mc{O}_{\Gies^1}(1)$ to be very ample. Finally, we set $\Gies^2:=\PP\left(Y^{\oplus \tup{\ka}}_{a_2,b_2}\right)$ and define the \emph{Gieseker space} as
\[
 \Gies_n := \Gies^0\times_{\Jac^{\tup{d}}}\Gies^1 \times \Gies^2\,.
\]
Note that $\Gies_n$ is projective over $\Jac^{\tup{d}}\times \Jac^l$ and has a natural action of $\GL_T(Y)$. The universal family on $\QTmp_n$ determines an injective, $\GL_T(Y)$-invariant morphism
\[
 \gies_n:\QTmp_n\to \Gies_n
\]
over $\Jac^{\tup{d}}\times \Jac^l$.

\begin{remark}
 Let $L^{d_t}$ be a line bundle of degree $d_t$, $t\in T$, and $L$ a line bundle of degree $l$ on $X$. The fiber of $\Gies_n$ over the point in 
$\Jac^{\tup d}\times \Jac ^l$ determined by these line bundles is isomorphic to
 \[
 \prod_{t\in T} \Gies^0_{t}(L^{d_t})\times
\PP\left(\Hom\left(Y^{\oplus\tup{\ka}}_{a_1,b_1},H^0(L^{\tup{d}\otimes c_1}\otimes L(a_1n))
\right)^\vee\right)\times \PP(Y^{\oplus\tup{\ka}}_{a_2,b_2})\,.
 \]
 with $L^{\tup{d}}:=\bigotimes_{t\in T}L^{d_t}$ and
 \[
  \Gies^0_{t}(L^{d_t}):=\PP\left(\Hom\left(\bigwedge^{r_t}
Y^{t},H^0(L^{d_t}(n r_t))\right)^\vee\right)\,.
 \]
 Let $\la$ be a one-parameter subgroup of $\SL_T^{\tup{\ka}}(Y)$ with associated weighted flag $(Y_\bullet,\tup{\alpha})$. For a point $[M_t]\in \Gies^0_{t}(L^{d_t})$ one finds
\begin{equation}
 \begin{split} \label{eq:stab_in_Gies_0}
 \mu(\lambda,[M_t])=&-\sum_{k=1}^{\len(Y_\bullet)+1} \gamma_k\left(\rk(F^t_k)-\rk(F^t_{k-1}) \right)\\
  =&\sum_{j=1}^{\len(Y_\bullet)}\alpha_j \left(p(n)\rk(F^t_j)-\rk(E^t)\dim_{\tup{\ka}}(Y_j)
\right)\,,
 \end{split}
\end{equation}
where $p(n):=\dim_{\tup{\ka}}(Y)$ and $F^t_j\subset E^t$ is the subbundle generated by $Y^t_j$. For two points
\[
 [T_1]\in\PP\left(\Hom\left(Y^{\oplus\tup{\ka}}_{a_1,b_1},H^0(L^{\tup{d}\otimes
c_1}\otimes L(a_1n)) \right)^\vee\right)\,,\qquad [T_2]\in\PP(Y^{\oplus\tup{\ka}}_{a_2,b_2})\,,
\]
we get
\begin{align} \label{eq:stab_in_Gies_1}
 \mu(\lambda,[T_1])&=-\min\left\{\sum_{j=1}^{\len(Y_\bullet)}
\alpha_j(a_1\dim_{\tup{\ka}}(Y_j)-p(n)\nu_j(\tup{i}))\,\bigg|\,\tup{i}\in
I_1:T_{1|Y^{\otimes\tup{i}}}\neq 0 
\right\}\,, \\
\label{eq:stab_in_Gies_2}
 \mu(\lambda,[T_2])&=-\min\left\{\sum_{j=1}^{\len(Y_\bullet)}
\alpha_j(a_2\dim_{\tup{\ka}}(Y_j)-p(n)\nu_j(\tup{i}))\,\bigg|\,\tup{i}\in
I_2:T_{2|Y^{\otimes\tup{i}}}\neq 0 
\right\}
\end{align}
With $I_1$, $I_2$ and $Y^{\otimes \tup{i}}$ defined similarly as in Remark \ref{rem:tmp_stab}. In particular, we have the estimate
\begin{equation} \label{eq:GIT_stab_estimate}
  -a_2 \sum_{j=1}^{\len(Y_\bullet)}\alpha_j \dim_{\tup{\ka}}(Y_j)\le \mu(\lambda,[T_2])\le a_2\sum_{j=1}^{\len(Y_\bullet)}\alpha_j(p(n)-\dim_{\tup{\ka}}(Y_j))\,.
\end{equation}
\end{remark}

\subsection{Comparison of Stability Conditions}
We compare $(\delta_1,\delta_2,\tup{\ka},\tup{\chi})$-stability of a decorated tump with GIT-stability of a corresponding point in the parameter space. As an intermediate step, we introduce another notion of stability: For a vector bundle $E$ and a weighted flag $(E_\bullet,\tup{\alpha})$ we define
 \[
  M^{\textnormal{s}}_{\tup{\ka},\tup{\chi}}(E_\bullet,\tup{\alpha},n):= \sum_{j=1}^{\len(E_\bullet)} \alpha_j
\left(h^0_{\tup{\ka},\tup{\chi}}(E(n))\rk_{\tup{\ka}}(E_j)-h^0_{\tup{\ka},\tup{\chi}}(E_j(n))\rk_{
\tup{\ka}}(E) \right)\,.
 \]
\begin{definition}
 We call a decorated tump $(E,L,\phi,s)$ \emph{$(\delta_1,\delta_2,\tup{\ka},\tup{\chi},n)$-section-\sstable{}} if the condition
 \[
   M^{\textnormal{s}}_{\tup{\ka},\tup{\chi}}(E_\bullet,\tup{\alpha},n)
  +\delta_1\mu_1(E_\bullet,\tup{\alpha},\phi)+\delta_2\mu_2(E_\bullet,\tup{\alpha},s) (\ge) 0
 \]
holds for every weighted flag $(E_\bullet,\tup{\alpha})$ of $E$.
\end{definition}

\begin{proposition} \label{prop:section_stab<=>stab}
 There is an $n_1$ such that for all $n\ge n_1$ the following holds:
 \begin{enumerate}
  \item The notion of  $(\delta_1,\delta_2,\tup{\ka},\tup{\chi},n)$-section-\textup{(}semi\nobreakdash-\textup{)}stability and $(\delta_1,\delta_2,\tup{\ka},\tup{\chi})$-\textup{(}semi\nobreakdash-\textup{)}stability are equivalent.
  \item If $(E,L,\phi,s)$ is a $(\delta_1,\delta_2,\tup{\ka},\tup{\chi})$-semistable decorated tump, a weighted flag $(E_\bullet,\tup{\alpha})$ of $E$ satisfies
\begin{align} \label{eq:section-critical}
 & M^{\textnormal{s}}_{\tup{\ka},\tup{\chi}}(E_\bullet,\tup{\alpha},n)
  +\delta_1\mu_1(E_\bullet,\tup{\alpha},\phi)+\delta_2\mu_2(E_\bullet,\tup{\alpha},s) = 0%\\
% \Longleftrightarrow \quad & M_{\tup{\ka},\tup{\chi}}(E_\bullet,\tup{\alpha})+\delta_1\mu_1(E_\bullet,\tup{\alpha},
% \phi)+\delta_2\mu_2(E_\bullet,\tup{\alpha},s)= 0\,.
 \end{align}
 if and only if it is critical.
 \end{enumerate}
\end{proposition}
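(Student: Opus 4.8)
The plan is to isolate the only genuine difference between the two stability conditions. Since $\delta_1\mu_1$ and $\delta_2\mu_2$ enter $(\delta_1,\delta_2,\tup{\ka},\tup{\chi},n)$-section-stability and $(\delta_1,\delta_2,\tup{\ka},\tup{\chi})$-stability as one and the same summand, untouched by the passage from degrees to dimensions of spaces of sections, the whole statement reduces to comparing $M_{\tup{\ka},\tup{\chi}}$ with $M^{\textnormal{s}}_{\tup{\ka},\tup{\chi}}$. Fixing $n\ge n_0$ so that $H^1(E^t(n))=0$ for all $t$, Riemann--Roch gives $h^0_{\tup{\ka},\tup{\chi}}(E(n))=\deg_{\tup{\ka},\tup{\chi}}(E)+\rk_{\tup{\ka}}(E)(n+1-g)$; applying Riemann--Roch to each step $E_j$ and substituting into the definitions, the contributions linear in $(n+1-g)$ cancel and I obtain the identity
\[
 M^{\textnormal{s}}_{\tup{\ka},\tup{\chi}}(E_\bullet,\tup{\alpha},n)=M_{\tup{\ka},\tup{\chi}}(E_\bullet,\tup{\alpha})-\rk_{\tup{\ka}}(E)\sum_{j=1}^{\len(E_\bullet)}\alpha_j\sum_{t\in T}\ka_t\,h^1(E_j^t(n))\,.
\]
Denote the nonnegative correction by $D(E_\bullet,\tup{\alpha},n)$; it vanishes exactly when every step $E_j^t$ is $H^1(\,\cdot\,(n))$-acyclic.

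Because $D\ge 0$ we have $M^{\textnormal{s}}_{\tup{\ka},\tup{\chi}}\le M_{\tup{\ka},\tup{\chi}}$, so adding the common decoration terms shows immediately, and for every $n\ge n_0$, that section-(semi)stability implies (semi)stability. All remaining assertions follow from the single quantitative estimate I would establish: there is an $n_1$ such that for $n\ge n_1$, every semistable decorated tump and every weighted flag satisfy
\[
 D(E_\bullet,\tup{\alpha},n)\le M_{\tup{\ka},\tup{\chi}}(E_\bullet,\tup{\alpha})+\delta_1\mu_1(E_\bullet,\tup{\alpha},\phi)+\delta_2\mu_2(E_\bullet,\tup{\alpha},s)\,,
\]
with equality only if $D=0$. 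Granting this, (semi)stability implies section-(semi)stability; on a critical flag the right-hand side is $0$, forcing $D=0$ and hence section-criticality; and a section-critical flag gives $M_{\tup{\ka},\tup{\chi}}+\delta_1\mu_1+\delta_2\mu_2=D\ge 0$, so the equality clause forces $D=0$ and criticality. Thus both parts reduce to this one estimate.

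To prove it I would run the Le Potier--Simpson/boundedness argument used for decorated swamps in \cite{Beck2014DecSwamps,Beck2014AsymptoticStability}. The decoration is harmless: the inner minima in \eqref{eq:tmp_stab1} and \eqref{eq:tmp_stab2} bound $\delta_1\mu_1+\delta_2\mu_2$ below by $-(a_1\delta_1+a_2\delta_2)\rk_{\tup{\ka}}(E)\sum_j\alpha_j$, independently of $n$ and of the degrees of the steps, so it suffices to bound $D$, equivalently $M^{\textnormal{s}}_{\tup{\ka},\tup{\chi}}$ from below. The decisive device is Simpson's replacement: one substitutes each step $E_j$ by the $T$-split subbundle generically generated by the image of $H^0(E_j^t(n))\otimes\mc{O}_X(-n)$. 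This does not decrease $h^0_{\tup{\ka},\tup{\chi}}(\,\cdot\,(n))/\rk_{\tup{\ka}}(\,\cdot\,)$, and by the boundedness lemma established above the resulting subbundles lie in a bounded family, hence are $H^1(\,\cdot\,(n))$-acyclic for $n\ge n_1$; their Hilbert polynomials then carry the genus term $-g$ exactly, which is precisely what recovers the inequality that a naive application of Proposition \ref{prop:LS-estimate} misses. The Le Potier--Simpson estimate itself controls the transitional steps, of intermediate negativity, where both $h^0$ and $h^1$ are nonzero.

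The main obstacle is twofold, and both parts are the technical heart of the cited estimates. First, uniformity: a single $n_1$ must dominate $D$ simultaneously for all semistable tumps and all weighted flags, even though the subbundles occurring in flags form no bounded family, and this rests on the balance between the unbounded negativity of a step, which enters $M_{\tup{\ka},\tup{\chi}}$ through $-\deg_{\tup{\ka},\tup{\chi}}(E_j)\rk_{\tup{\ka}}(E)$ with large positive sign, and the control of its higher cohomology. Second, one must verify that the decoration terms $\mu_1,\mu_2$ do not decrease under Simpson's replacement, so that the full stability function is genuinely improved rather than merely its vector-bundle part. The only feature absent from the swamp case is the second decoration $s$, localized at $x_0$; but $\mu_2$ is a separate summand common to both conditions and bounded below as above, so it contributes nothing new to the comparison, and the argument of \cite{Beck2014DecSwamps,Beck2014AsymptoticStability} applies with only notational changes.
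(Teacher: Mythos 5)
Your proposal follows essentially the same route as the paper: the easy direction via $M^{\textnormal{s}}_{\tup{\ka},\tup{\chi}}\le M_{\tup{\ka},\tup{\chi}}$ once the relevant $h^1$'s vanish, and the hard direction via the Le Potier--Simpson/boundedness machinery of \cite{Beck2014DecSwamps} and \cite{Schmitt08}, with your explicit identity $M_{\tup{\ka},\tup{\chi}}-M^{\textnormal{s}}_{\tup{\ka},\tup{\chi}}=D$ playing the role of the paper's decomposition of the flag into the acyclic part $I^B$ and the remainder $I^A$, on which the margin $(a_1\delta_1+a_2\delta_2)\sum_j\alpha_j\rk_{\tup{\ka}}(E_j)$ is absorbed. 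The one point you gloss over is that the easy direction requires $h^1(E^t(n))=0$ for \emph{section}-semistable tumps (a separate boundedness statement, which the paper imports from Lemma 5.2 of \cite{Beck2014DecSwamps}); with that supplied, your reduction of both parts to the single estimate $D\le M_{\tup{\ka},\tup{\chi}}+\delta_1\mu_1+\delta_2\mu_2$, with equality only if $D=0$, is a correct repackaging of what the paper proves.
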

\begin{proof}
 (i) As in the proof of \cite{Beck2014DecSwamps}, Lemma 5.2, one can show that for $n$ large enough, any $(\delta_1,\delta_2,\tup{\ka},\tup{\chi},n)$-section-\sstable{} decorated tump $(E,L,\phi,s)$ satisfies $h^1(E^t(n))=0$ for all $t\in T$. Together with the Riemann--Roch theorem this implies $M^{\textnormal{s}}_{\tup{\ka},\tup{\chi}}(E_\bullet,\tup{\alpha},n) \le M_{\tup{\ka},\tup{\chi}}(E_\bullet,\tup{\alpha})$. Thus, $(E,L,\phi,s)$ is also $(\delta_1,\delta_2,\tup{\ka},\tup{\chi})$-(semi-)stable.
 
 We now show the converse: The proof of \cite{Beck2014DecSwamps}, Lemma 5.4, with the Le Potier--Simpson estimate replaced by Proposition \ref{prop:LS-estimate} shows that if $n$ is sufficiently large, for every $(\delta_1,\delta_2,\tup{\ka},\tup{\chi})$-\sstable{} decorated tump $(E,L,\phi,s)$ every $T$-split subbundle $F\subset E$ satisfies
\begin{equation} \label{eq:M^s>0}
  h^0_{\tup{\ka},\tup{\chi}}(E(n))\rk_{\tup{\ka}}(F)-
h^0_{\tup{\ka},\tup{\chi}}(F(n))\rk_{\tup{\ka}}(E)-\rk_{\tup{\ka}}(F)(a_1\delta_1+a_2\delta_2)> 0\,
\end{equation}
or $h^1(F^t)=0$ for all $t\in T$. Given a weighted flag $(E_\bullet,\tup{\alpha})$ we decompose the set of indices $I:=\{1,\ldots,\len(E_\bullet)\}$ into the two sets $I^B:=\{i\in I\,|\,\forall t\in T: h^1(E_i^t(n))=0\}$ and $I^A:=I\setminus I^B$. Denote the corresponding weighted flags by $(E_\bullet^{A/B},\tup{\alpha}^{A/B})$. If $I^A\neq \varnothing$, the estimate from Lemma 1.5.1.41, ii), in \cite{Schmitt08} gives
\begin{align*}
 &
M^{\textnormal{s}}_{\tup{\ka},\tup{\chi}}(E_\bullet,\tup{\alpha},n)+\delta_1\mu_1(E_\bullet,\tup{\alpha},
\phi)+\delta_2\mu_2(E_\bullet,\tup{\alpha},s)\\
 \ge\;&
M^{\textnormal{s}}_{\tup{\ka},\tup{\chi}}(E_\bullet^B,\tup{\alpha}^B,n)+\delta_1\mu_1(E_\bullet^B,\tup{
\alpha}^B,\phi)+\delta_2\mu_2(E_\bullet^B,\tup{\alpha}^B,s)\\
  & +
M^{\textnormal{s}}_{\tup{\ka},\tup{\chi}}(E_\bullet^A,\tup{\alpha}^A,n)- \sum_{1\le j\le \len(E_\bullet^A)} \alpha^A_j \rk_{\tup{\ka}}(E^A_j)(\delta_1a_1+\delta_2a_2)\,.
 \end{align*}
The last term is positive by the definition of $I^A$ and \eqref{eq:M^s>0}. Due to the definition of $I^B$ and the theorem of Riemann--Roch we have $M^{\textnormal{s}}_{\tup{\ka},\tup{\chi}}(E_\bullet^B,\tup{\alpha}^B,n)= M_{\tup{\ka},\tup{\chi}}(E_\bullet^B,\tup{\alpha}^B)$. Thus, $(E,L,\phi,s)$ is also $(\delta_1,\delta_2,\tup{\ka},\tup{\chi},n)$-section-\sstable{}.

(ii) Let $(E,L,\phi,s)$ be a $(\delta_1,\delta_2,\tup{\ka},\tup{\chi})$-\sstable{} decorated tump. The first paragraph of part (i) shows that a critical weighted flag $(E_\bullet,\tup{\alpha})$ of $E$ satisfies \eqref{eq:section-critical}. Let $(E_\bullet,\tup{\alpha})$ now be a weighted flag such that \eqref{eq:section-critical} holds. The arguments from the second paragraph of (ii) show $I^A=\varnothing$, i.e. $(E_\bullet,\tup{\alpha})=(E_\bullet^B,\tup{\alpha}^B)$. It follows that $(E_\bullet,\tup{\alpha})$ is critical.
\end{proof}

We linearize the action of $\SL^{\tup{\ka}}_T(Y)$ on $\Gies_n$ in the line bundle
\begin{equation*} 
 \mc{O}_{\Gies}(\eta_t,t\in T,\theta_1,\theta_2):= \left(\bigotimes_{t\in T}\pr_{\Gies^0_{t}}^*\mc{O}_{\Gies^0_{t}}(\eta_t)\right)\boxtimes \mc{O}_{\Gies^1}(\theta_1) \boxtimes  \mc{O}_{\Gies^2}(\theta_2)
\end{equation*}
with
\begin{equation} 
\begin{gathered} \label{eq:Linearization}
 \eta_t:=z\left(\ka_t(p(n)+(\tup{r},\tup{\chi})-a_1\delta_1-a_2\delta_2) -r\chi_t\right)  \,, \quad
 t\in T\,,\\
\theta_1:=zr\delta_1\,, \qquad \theta_2:=zr\delta_2\,.
\end{gathered}
\end{equation}
Here, $z$ is a natural number such that $\eta_t$, $t\in T$, $\theta_1$ and $\theta_2$ are positive integers.
\begin{remark}
  Note that our linearization is not modified by a character of $\SL^{\tup{\ka}}_T(Y)$ (compare \cite{Schmitt08}, \S 2.5.4).
\end{remark}

Let $p$ be a point in $\QTmp_n$ and denote by $(E,L,\phi,s)$ the corresponding decorated Tump.
\begin{proposition}
 Assume $a_2\delta_2<1$. Then for $n\ge n_1$ we have:
 \begin{enumerate}
  \item The point $\gies_n(p)$ is GIT \sstable{} with respect to the linearization given in \eqref{eq:Linearization} if and only if $(E,L,\phi,s)$ is $(\delta_1,\delta_2,\tup{\ka},\tup{\chi})$-\sstable{}.
  \item If $\gies_n(p)$ is GIT semistable, there is a bijection $\Gamma_p$ from the set of critical weighted flags $(E_\bullet,\tup{\alpha})$ to the set of critical one-parameter subgroups $\la$ of $\SL_T^{\tup{\ka}}(Y)$.
 \end{enumerate}
\end{proposition}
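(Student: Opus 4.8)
The plan is to apply the Hilbert--Mumford criterion: the point $\gies_n(p)$ is GIT \sstable{} with respect to \eqref{eq:Linearization} if and only if $\mu(\la,\gies_n(p))\,(\ge)\,0$ for every one-parameter subgroup $\la$ of $\SL_T^{\tup{\ka}}(Y)$. Since $\gies_n$ lands in $\Gies_n=\Gies^0\times_{\Jac^{\tup{d}}}\Gies^1\times\Gies^2$ and the linearization is the external tensor product of the factors, the weight decomposes as
\[
 \mu(\la,\gies_n(p))=\sum_{t\in T}\eta_t\,\mu(\la,[M_t])+\theta_1\,\mu(\la,[T_1])+\theta_2\,\mu(\la,[T_2])\,,
\]
with the three summands computed by \eqref{eq:stab_in_Gies_0}, \eqref{eq:stab_in_Gies_1} and \eqref{eq:stab_in_Gies_2}. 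To each $\la$ I attach its weighted flag $(Y_\bullet,\tup{\alpha})$ of $Y$ and, via the subbundles $F^t_j\subset E^t$ generated by $Y^t_j$, a weighted flag $(F_\bullet,\tup{\alpha})$ of $E$; this is the dictionary through which the GIT weights are translated into the section-stability datum.

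Call $\la$ \emph{saturated} if $Y^t_j=H^0(F^t_j(n))$ for all $t$ and $j$. Substituting \eqref{eq:Linearization} into the decomposition and collecting terms — using $\dim_{\tup{\ka}}(Y)=p(n)$, the identity $\ka_t\,h^0_{\tup{\ka},\tup{\chi}}(E(n))=\ka_t(p(n)+(\tup{r},\tup{\chi}))$ coming from $h^1(E^t(n))=0$, and $\dim Y^t_j=h^0(F^t_j(n))$ for saturated $\la$ — the $\delta$-dependent part of $\eta_t$ combines with the $\Gies^1$- and $\Gies^2$-weights (compared through \eqref{eq:tmp_stab1} and \eqref{eq:tmp_stab2}) to yield, for every saturated $\la$, the exact identity
\[
 \mu(\la,\gies_n(p))=z\Big(M^{\textnormal{s}}_{\tup{\ka},\tup{\chi}}(F_\bullet,\tup{\alpha},n)+\delta_1\mu_1(F_\bullet,\tup{\alpha},\phi)+\delta_2\mu_2(F_\bullet,\tup{\alpha},s)\Big)\,.
\]
As $\la$ ranges over the saturated one-parameter subgroups, the flag $(F_\bullet,\tup{\alpha})$ ranges over all weighted flags of $E$ (here one uses that $F^t_j(n)$ is globally generated for $n\ge n_1$, which is legitimate because the relevant subbundles form a bounded family).

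The main obstacle is the reduction of GIT-semistability to saturated one-parameter subgroups. Given an arbitrary $\la$, let $\bar\la$ be its saturation; then $\la$ and $\bar\la$ generate the same flag $(F_\bullet,\tup{\alpha})$, while $\dim Y^t_j\le\dim\bar Y^t_j=h^0(F^t_j(n))$. Comparing the two weights, the contributions from $\Gies^0$ and $\Gies^1$ do not increase under saturation, whereas the change of the $\Gies^2$-contribution is bounded by the two-sided estimate \eqref{eq:GIT_stab_estimate}; the hypothesis $a_2\delta_2<1$ is precisely what makes the former dominate, so that $\mu(\la,\gies_n(p))\ge\mu(\bar\la,\gies_n(p))$, with strict inequality unless $\la$ is already saturated. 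This monotonicity shows that GIT-semistability need only be tested on saturated one-parameter subgroups, whence, by the displayed identity, $\gies_n(p)$ is GIT \sstable{} precisely when $(E,L,\phi,s)$ is $(\delta_1,\delta_2,\tup{\ka},\tup{\chi},n)$-section-\sstable{}. Proposition \ref{prop:section_stab<=>stab}~(i) then shows that this holds exactly when $(E,L,\phi,s)$ is $(\delta_1,\delta_2,\tup{\ka},\tup{\chi})$-\sstable{}, which proves~(i).

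For~(ii), suppose $\gies_n(p)$ is GIT semistable and let $\Gamma_p$ send a critical weighted flag $(E_\bullet,\tup{\alpha})$ of $E$ to the saturated one-parameter subgroup determined by $Y^t_j:=H^0(E^t_j(n))$ with the same weights. By the displayed identity, $\mu(\Gamma_p(E_\bullet,\tup{\alpha}),\gies_n(p))=z\big(M^{\textnormal{s}}_{\tup{\ka},\tup{\chi}}(E_\bullet,\tup{\alpha},n)+\delta_1\mu_1+\delta_2\mu_2\big)$, which vanishes exactly when $(E_\bullet,\tup{\alpha})$ satisfies \eqref{eq:section-critical}; by Proposition \ref{prop:section_stab<=>stab}~(ii) this is equivalent to criticality, so $\Gamma_p$ takes values in the critical one-parameter subgroups. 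It is injective because a saturated one-parameter subgroup is recovered from its generated flag by taking sections. For surjectivity, a critical $\la$ has weight zero; its saturation then also has weight zero by GIT-semistability, so the strict part of the monotonicity above forces $\la$ to be saturated, hence to lie in the image. This yields the bijection $\Gamma_p$.
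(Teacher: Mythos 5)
Your overall architecture (Hilbert--Mumford criterion, splitting the weight over the three Gieseker factors via \eqref{eq:stab_in_Gies_0}--\eqref{eq:stab_in_Gies_2}, translating into the section-stability quantity, finishing with Proposition \ref{prop:section_stab<=>stab}) is the paper's, but the two technical claims that carry your whole argument are asserted rather than proved, and both fail as stated. First, the monotonicity $\mu(\la,\gies_n(p))\ge\mu(\bar\la,\gies_n(p))$ under saturation: replacing $Y_j$ by $\bar Y_j\supseteq Y_j$ enlarges the index set $\{\tup{i}\in I_1 \mid T_{1|Y^{\otimes\tup{i}}}\neq 0\}$ over which the minimum in \eqref{eq:stab_in_Gies_1} is taken, so even though each individual term of that minimum grows, the minimum itself can drop and $\mu(\bar\la,[T_1])$ can strictly exceed $\mu(\la,[T_1])$; your assertion that the $\Gies^1$-contribution ``does not increase under saturation'' is therefore unjustified, and nothing in your argument controls this term (the estimate \eqref{eq:GIT_stab_estimate} you invoke concerns only $\Gies^2$). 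Second, the exact identity for saturated $\la$ together with the claim that saturated one-parameter subgroups realize \emph{all} weighted flags of $E$: the subbundles of $E$ do \emph{not} form a bounded family --- their degrees are unbounded below --- so for a flag member $E_j$ of sufficiently negative degree one has $H^0(E_j^t(n))=0$, the ``saturated'' subgroup generates a strictly smaller flag and never sees $E_j$, and no choice of $n$ repairs this. This is why the paper never asserts an identity: it proves two one-sided inequalities, \eqref{eq:mu_le_M} for the one-parameter subgroup $\Gamma_p(E_\bullet,\tup{\alpha})$ attached to an \emph{arbitrary} flag of $E$, and \eqref{eq:mu_ge_M} for the flag $Q_p(Y_\bullet,\tup{\beta})$ attached to an arbitrary one-parameter subgroup, where $Q_p$ deliberately discards the pieces $F_h$ with $h^1(F_h^t(n))\neq 0$ and the estimate \eqref{eq:M^s>0} guarantees that the discarded pieces contribute positively. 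These two inequalities in opposite directions are what replace your ``identity plus monotonicity'', and their proofs are the nontrivial computations of Propositions 5.8 and 5.9 in \cite{Beck2014DecSwamps}.

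The same gaps propagate into part (ii): your surjectivity argument for $\Gamma_p$ rests on the strict form of the monotonicity, and your criterion that criticality of $(E_\bullet,\tup{\alpha})$ is detected by the vanishing of $\mu(\Gamma_p(E_\bullet,\tup{\alpha}),\gies_n(p))$ uses the exact identity; for a critical flag whose members are not generically generated by their twisted sections neither claim is available. The paper instead reads off part (ii) from the equality analysis of the two inequalities: equality in \eqref{eq:mu_ge_M} forces $Y_j^t=H^0(q_p(n))^{-1}H^0(E_j^t(n))$, which is what yields $\Gamma_p\circ Q_p=\id$ and $Q_p\circ\Gamma_p=\id$ on critical objects. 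To make your write-up correct you would have to prove \eqref{eq:mu_le_M} and \eqref{eq:mu_ge_M} (or equivalent statements) rather than the identity and the monotonicity you assert.
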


\begin{proof}
 (i) Given a weighted filtration $(E_\bullet,\tup{\alpha})$ of $E$, we construct a weighted flag of $Y$: Let $Y_\bullet$ be the flag induced by $U_j^t:=H^0(q_p^t(n))^{-1}H^0(E_j^t(n))$, $1\le j\le \len(E_\bullet)+1$, $t\in T$. For $1\le h\le \len(Y_\bullet)+1$ set $J(h):=\{j\,|\, U_j=Y_h \}$ and $\beta_h:=\sum_{j\in J(h)}\alpha_j$. We define $\Gamma_p(E_\bullet,\tup{\alpha}):=(Y_\bullet,\tup{\beta})$.
 
As in \cite{Beck2014DecSwamps}, Proposition 5.8, one shows that a one-parameter subgroup $\la$ of $\SL_T^{\tup{\ka}}(Y)$ with associated weighted flag $(Y_\bullet,\tup{\beta})=\Gamma_p(E_\bullet,\tup{\alpha})$ satisfies
\begin{equation} 
\label{eq:mu_le_M}
 \frac{\mu(\la,\gies_n(p))}{zp(n)}\le   M^{\textnormal{s}}_{\tup{\ka},\tup{\chi}}(E_\bullet,\tup{\alpha},n)+\delta_1\mu_1(E_\bullet,\tup{\alpha},
\phi)+\delta_2\mu_2(E_\bullet,\tup{\alpha},s) \,.
\end{equation}
Note that equality can only hold if $\len(E_\bullet)=\len(Y_\bullet)$ and $E_j(n)$ is generically generated by $Y_j$, $1\le j \le \len(E_\bullet)$. Details of this calculation can also be found in Proposition 6.41 in \cite{Beck2014}.

 Let now $(Y_\bullet,\tup{\beta})$ be a weighted flag of the $T$-split vector space $Y$. Let $F_h^t(n)\subset E^t(n)$ be the subsheaf generated by $Y^t_h$, $1\le h \le \len(Y_\bullet)$, $t\in T$, and let $E_\bullet$ be the flag induced by the $T$-split subbundles of $E$ generically generated by those $T$-split sheaves $F_h$ with $h^1(F^t_h(n))=0$ for all $t\in T$. Let $H(j):=\{h\,|\, F_h\textnormal{ generically generates }E_j \}$, $1\le j \le \len(E_\bullet)+1$, and $\alpha_j:=\sum_{h\in H(j)} \beta_h$, $1\le j \le \len(E_\bullet)$. We define $Q_p(Y_\bullet,\tup{\beta}):=(E_\bullet,\tup{\alpha})$.

Suppose $n\ge n_1$ and let $(E,L,\phi,s)$ be a $(\delta_1,\delta_2,\tup{\ka},\tup{\chi},n)$-section-\sstable{} decorated tump. Furthermore, let $\la$ be a one-parameter subgroup with associated weighted flag $(Y_\bullet,\tup{\beta})$. The proof of Proposition 5.9 in \cite{Beck2014DecSwamps} shows 
\begin{equation} 
\label{eq:mu_ge_M}
 \frac{\mu(\la,\gies_n(t))}{zp(n)} \ge 
 M^{\textnormal{s}}_{\tup{\ka},\tup{\chi}}(E_\bullet,\tup{\alpha},n)
 +\delta_1\mu_1(E_\bullet,\tup{\alpha},\phi)+\delta_2\mu_2(E_\bullet,\tup{\alpha},s)\,.
 \end{equation}
 for the weighted flag $(E_\bullet,\tup{\alpha}):=Q_p(Y_\bullet,\tup{\beta})$.
If equality holds, one has $\len(E_\bullet)=\len(Y_\bullet)$ and
\[
 Y^t_j=H^0(q_p(n))^{-1}H^0(E^t_j(n)) \,,  \text{ for all }t\in T\,, 1\le j \le \len(E_\bullet)\,.
\]
 In particular, one has $\Gamma_p(Q_p(Y_\bullet,\tup{\beta}))=(Y_\bullet,\tup{\beta})$ in this case. Again, details can be found in \cite{Beck2014}, Proposition 6.42.

Together with Proposition \ref{prop:section_stab<=>stab}, (i), the inequalities \eqref{eq:mu_le_M} and \eqref{eq:mu_ge_M} prove part (i). 

(ii) If $\la$ is a critical one-parameter subgroup for $p$, then by Proposition \ref{prop:section_stab<=>stab}, (ii), and \eqref{eq:mu_ge_M} the weighted flag $Q_p(Y_\bullet,\tup{\beta})$ is also critical. Thus, one finds $\Gamma_p(Q_p(Y_\bullet,\tup{\beta})=(Y_\bullet,\tup{\beta})$. Conversely, if $(E_\bullet,\tup{\alpha})$ is a critical flag, then Proposition \ref{prop:section_stab<=>stab}, (ii), and \eqref{eq:mu_le_M} show that any one-parameter subgroup $\la$ with associated weighted flag $(Y_\bullet,\tup{\beta}):=\Gamma_p(E_\bullet,\tup{\alpha})$ is critical. In particular, $E_j(n)$ is generated by $Y_j$. Finally, \eqref{eq:mu_ge_M} shows that $Q_p(\Gamma_p(E_\bullet,\tup{\alpha})$ is also critical and we have $Q_p(\Gamma_p(E_\bullet,\tup{\alpha})=(E_\bullet,\tup{\alpha})$.
\end{proof}

\subsection{Proof of Theorem \ref{thm:moduli_space_of_tumps}}
\label{subsec:proof_moduli_of_tumps}

The results of the previous sections show that the open subscheme
\[
 \QTmp_n^{\textnormal{(s)s}}:=\gies_n^{-1}(\Gies_n^{\textnormal{(s)s}})
\]
parametrizes $(\delta_1,\delta_2,\tup{\ka},\tup{\chi})$-\sstable{} decorated tumps. It is easy to see that the family $(\tilde{E},\tilde{\ka},\tilde{N}_1,\tilde{N}_2,\tilde{\phi},\tilde{s})$ on $\QTmp_n^{\textnormal{(s)s}}$ satisfies the local universal property and is compatible with the group action in the sense of \cite{Beck2014DecSwamps}, \S 2.2.
The following proposition shows that our notion of S-equivalence is correct.
\begin{proposition}
 Let $\la$ be a one-parameter subgroup of $\SL_T^{\tup{\ka}}(Y)$ with associated weighted flag $(Y_\bullet,\tup{\beta})$ and $p_\infty:=\lim_{t\to \infty}\la(t)\cdot p$ the limit point of $p$. Then, the admissible deformation of $(E,L,\phi,s)$ along $(E_\bullet,\tup{\alpha}):=Q_p(Y_\bullet,\tup{\beta})$ is isomorphic to the decorated tump defined by $p_\infty$.
\end{proposition}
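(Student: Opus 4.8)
The plan is to compute the limit point $p_\infty$ componentwise in the Gieseker space and to identify it, via the injective equivariant morphism $\gies_n$, with a decorated quotient tump built from the associated graded data. Since $\gies_n$ is $\SL_T^{\tup\ka}(Y)$-equivariant and injective, it is enough to determine $\lim_{t\to\infty}\la(t)\cdot\gies_n(p)$ inside $\Gies_n=\Gies^0\times_{\Jac^{\tup d}}\Gies^1\times\Gies^2$ and to check that this limit lies in $\im(\gies_n)$ and represents the tuple $(E_\gr,L,\phi_\gr,s_\gr)$. Throughout I write $Y=\bigoplus_i Y^i$ for the eigenspace decomposition attached to $\la$, with $Y_j=\bigoplus_{i\le j}Y^i$ the associated flag.

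The $\Gies^0$- and $\Gies^1$-components are handled exactly as in the proof of the analogous statement for decorated swamps in \cite{Beck2014DecSwamps}. For $\Gies^0$, the standard computation of limits of quotients in the Quot scheme identifies $\lim_{t\to\infty}\la(t)\cdot q_p$ with the quotient onto the associated graded $T$-split bundle $E_\gr=\bigoplus_j E_j/E_{j-1}$ along the flag $E_\bullet$ generically generated by the subsheaves of $E^t$ produced from $Y_j^t$; by the construction of $Q_p$ this flag is precisely $(E_\bullet,\tup\alpha)=Q_p(Y_\bullet,\tup\beta)$. For $\Gies^1$, the space $V_1$ is a direct summand of the representation on $W^{\oplus\tup\ka}_{a_1,b_1,c_1}$, so $\la$ acts on the ambient $\Hom$-space through weighted tensor and wedge powers of the eigenspace decomposition of $Y$; the limit of $[T_1]$ therefore retains only the summand of minimal $\la$-weight on which it is non-zero, the weight being governed by the quantities $\nu_j(\tup i)$ in \eqref{eq:stab_in_Gies_1}. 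Transported through $\gies_n$, this minimal-weight summand corresponds to the index $i_0=\min\{i\mid\phi_{|E_{\rho,i}}\neq0\}$ and yields the induced map $\phi_0\colon E_{\rho,i_0}/E_{\rho,i_0-1}\to L$ composed with the projection from $E_{\rho,\gr}$, i.e.\ $\phi_\gr$.

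The genuinely new ingredient is the $\Gies^2=\PP(Y^{\oplus\tup\ka}_{a_2,b_2})$-component, which encodes the local decoration $s$ at $x_0$. Here the same weight analysis applies, now at the single point $x_0$: the limit of $[T_2]$ under $\la$ is its minimal-weight part with respect to the decomposition induced by $Y_\bullet$, with weights recorded by $\nu_j(\tup i)$ in \eqref{eq:stab_in_Gies_2}. Matching this against the geometry via $\gies_n$, the surviving term corresponds to $j_0=\min\{j\mid s_{|E_{\sigma,j|\{x_0\}}}\neq0\}$ and produces the induced homomorphism $s_0\colon E_{\sigma,j_0}/E_{\sigma,j_0-1}\to\CC$ followed by the projection from $E_{\sigma,\gr}$, that is $s_\gr$. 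The main obstacle lies in this bookkeeping: one must verify that the eigenspace $Y^i$ generically generates the graded piece $E_i/E_{i-1}$, so that the leading (minimal-weight) term of each decoration survives on the correct graded summand, and that the compatibilities $(E_\rho)_\gr=(E_\gr)_\rho$ and $(E_\sigma)_\gr=(E_\gr)_\sigma$ noted above make $\phi_\gr$ and $s_\gr$ genuine decorations of $E_\gr$.

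Assembling the three limits, the point $p_\infty$ carries the associated graded bundle $E_\gr$, the unchanged line bundle $L$, and the decorations $\phi_\gr$ and $s_\gr$; this is by definition the admissible deformation $\df_{(E_\bullet,\tup\alpha)}(E,L,\phi,s)$. It then remains only to observe that the resulting tuple satisfies the cohomological conditions defining a point of $\QTmp_n$, so that the limit indeed lies in $\im(\gies_n)$, whence the claimed isomorphism follows from the injectivity of $\gies_n$.
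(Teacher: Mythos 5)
Your argument is correct in outline but follows a genuinely different route from the paper. The paper does not compute the limit in the Gieseker space at all: it constructs an explicit family of decorated quotient tumps over $S=\mathbb{A}^1$ (the standard Rees-type degeneration attached to $\la$, as in \cite{Beck2014DecSwamps}, Proposition 6.4), uses the universal property of the \emph{fine} moduli space $\QTmp_n$ to obtain a morphism $f:\mathbb{A}^1\to\QTmp_n$ with $f(t^{-1})=\la(t)\cdot p$ for $t\in\CC^*$, and then identifies the fiber of the family over $0$ with $\df_{(E_\bullet,\tup{\alpha})}(E,L,\phi,s)$; separatedness then gives $f(0)=p_\infty$. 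You instead push everything into $\Gies_n$, compute $\lim_{t\to\infty}\la(t)\cdot\gies_n(p)$ componentwise (the $\Gies^0$ and $\Gies^1$ parts being the known swamp computation, the $\Gies^2$ part the new local one), and conclude by equivariance, continuity and injectivity of $\gies_n$. Both work, and your treatment of the $\Gies^2$-component via the minimal-weight summand in \eqref{eq:stab_in_Gies_2} correctly produces $s_\gr$. What the paper's route buys is that the family automatically packages the identification $Y\cong H^0(E_\gr(n))$ and exhibits the limit as an honest point of $\QTmp_n$, whereas in your route the final step --- verifying that the limit in $\Gies_n$ is precisely the Gieseker point of the quotient $Y\otimes\mc{O}_X(-n)\to E_\gr$ built from the eigenspace decomposition $Y=\bigoplus_i Y^i$, and that this quotient satisfies the defining conditions of $\QTmp_n$ --- carries essentially the same work in a less structured form; in particular you should be explicit that the flag $(E_\bullet,\tup{\alpha})=Q_p(Y_\bullet,\tup{\beta})$ may discard steps of $Y_\bullet$ with $h^1\neq 0$, so the matching of the $Y_\bullet$-graded limit with the $E_\bullet$-graded object is exactly the point where the hypothesis that $p_\infty$ exists in $\QTmp_n$ enters.
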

\begin{proof}
We construct a certain family of decorated quotient tumps $(q_S,\ka_S,N_{1,S},N_{2,S},\phi_S,s_S)$ parametrized by $S:=\mathbb{A}^1$ as in \cite{Beck2014DecSwamps}, Proposition 6.4. The universal property of $\QTmp_n$ gives a morphism $f:\CC\to \QTmp_n$, which satisfies $\la(t)\cdot p=f(t^{-1})$ for $t\in \CC^*$. Finally, one checks that the decorated tump over $t=0$ is isomorphic to $\df_{(E_\bullet,\tup{\alpha})}(E,L,\phi,s)$. 
\end{proof}

It remains to show the existence of the good quotient.

\begin{proposition}
Suppose $a_2\delta_2<1$. Then, there is an $n_2$, such that for $n\ge n_2$ the restriction of the Gieseker morphism to the semistable locus
 \[
  \gies_{n|\QTmp_n^{\textnormal{ss}}}:\QTmp_n^{\textnormal{ss}}\to \Gies^{\textnormal{ss}}
 \]
is proper.
\end{proposition}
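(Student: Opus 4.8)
The plan is to prove properness via the valuative criterion, working over the base $\Jac^{\tup{d}}\times \Jac^l$ (where $\Gies_n$ is already projective) so that the essential content is semistable reduction for the $T$-split vector bundle together with its decorations. Concretely, I would take a discrete valuation ring $R$ with fraction field $K$, a morphism $\Spec K\to \QTmp_n^{\textnormal{ss}}$ and an extension $\Spec R\to \Gies_n^{\textnormal{ss}}$, and produce an extension $\Spec R\to \QTmp_n^{\textnormal{ss}}$ lifting it. Since $\gies_n$ is an injective $\GL_T(Y)$-invariant morphism and $\Gies_n$ is projective over $J:=\Jac^{\tup{d}}\times \Jac^l$, uniqueness of the lift is automatic; the real work is existence.

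The first step is to spread out the given $K$-valued family. A $K$-point of $\QTmp_n^{\textnormal{ss}}$ is a decorated quotient tump over $\Spec K$, which one extends to a family of $T$-split sheaves over $\Spec R \times X$ by taking the saturation/closure of the graph of the quotient; this is the standard construction used for the Quot scheme, applied coordinatewise in $t\in T$. This yields a $T$-split sheaf $E_R$ flat over $R$ whose special fiber $E_0$ may fail to be locally free or may have the wrong Hilbert polynomial, and correspondingly the sections $\phi$ and $s$ only extend after possibly twisting. The decorations are handled as in \cite{Beck2014DecSwamps}: $\phi_R$ extends to a homomorphism $E_{R,\rho}\to L_R$ after absorbing a power of the uniformizer, and the local decoration $s$, being a section over $x_0$ only, extends once one has controlled the fiber of $E_{\sigma}$ at $x_0$.

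The heart of the argument is the \emph{Langton-type} modification: if the special fiber of the extended family fails to be semistable (or fails local freeness), one replaces $E_R$ by an elementary modification along a destabilizing $T$-split subsheaf of $E_0$, exactly as in the classical proof of properness of the moduli space of semistable sheaves, but now carried out in the abelian category of $T$-split sheaves and compatibly with the two decorations. Each modification strictly decreases a suitable discrete invariant (the $(\tup{\ka},\tup{\chi})$-slope of the maximal destabilizing subsheaf of the special fiber, bounded below by boundedness of the semistable locus), so the process terminates after finitely many steps and produces a special fiber that is a locally free, $(\delta_1,\delta_2,\tup{\ka},\tup{\chi})$-semistable decorated tump. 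One then checks, using Proposition \ref{prop:section_stab<=>stab} and the comparison of stability conditions established in the previous subsection, that for $n$ large this limiting decorated tump corresponds to a point of $\QTmp_n^{\textnormal{ss}}$ whose image under $\gies_n$ agrees with the prescribed $R$-point of $\Gies_n^{\textnormal{ss}}$; here injectivity of $\gies_n$ forces the agreement and closes the valuative criterion.

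The main obstacle I expect is ensuring that the Langton modifications can be performed \emph{simultaneously} and compatibly for the global decoration $\phi$ and the local decoration $s$ while preserving the bound $a_2\delta_2<1$ that underlies all the stability comparisons. The delicate point is that an elementary modification changes the fiber $E_{\sigma|\{x_0\}}$, and one must verify that the local section $s$ extends to a \emph{nontrivial} homomorphism on the modified special fiber and that $\mu_2$ transforms in the controlled way recorded in \eqref{eq:tmp_stab2}; the inequality $a_2\delta_2<1$ is exactly what guarantees that the contribution of $s$ cannot overwhelm the decrease of the slope invariant, so that termination still holds. Once this compatibility is in place, the remaining verifications are routine adaptations of the swamp case in \cite{Beck2014DecSwamps} and \cite{Schmitt08}.
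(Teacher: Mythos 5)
Your overall framing via the valuative criterion and the extension of the quotient family through the projectivity of the Quot scheme matches the paper, but the core of your argument --- a Langton-type sequence of elementary modifications to repair the special fibre --- is not what the paper does, and it contains a genuine gap. The paper never modifies the extended family at all. After extending the $T$-split quotient over $\Spec(R)\times X$ and passing to the reflexive hull $E_R:=F_R^{\vee\vee}$ (locally free on the regular surface $\Spec(R)\times X$), it extends the decorations by rerunning the construction of the parameter space over $\Spec(R)$, so that the resulting tuple induces the \emph{given} morphism $h:\Spec(R)\to\Gies_n^{\textnormal{ss}}$. The entire remaining work is then done by the hypothesis your plan discards: the special fibre of $h$ is GIT-semistable in $\Gies_n$. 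This semistability gives $\dim_{\tup{\ka}}(\ker(H^0(q(n))))\le a_2\delta_2<1$, hence $H^0(q(n))$ is injective --- this, and not any termination estimate for modifications, is the actual role of $a_2\delta_2<1$; and, applied to the weighted flag $(U\subset Y,(1))$ with $U=\ker(Y\to H^0(Q(n)))$ for a $T$-split quotient $Q$ of minimal slope, together with the Le Potier--Simpson estimate of Proposition \ref{prop:LS-estimate}, it bounds $\mu_{\min}$ of the special fibre from below. For $n\ge n_2$ this forces global generation and $h^1(E^t(n))=0$, so $H^0(q(n))$ is an isomorphism for dimensional reasons and $q$ is surjective; the special fibre is therefore a point of $\QTmp_n^{\textnormal{ss}}$ lying over $h$.

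By contrast, you assert that elementary modifications can be carried out compatibly with both decorations and that the process terminates. For decorated objects this is precisely the hard content of semistable reduction: the invariant you propose to decrease involves $\mu_1$ and $\mu_2$, which are not monotone under elementary modifications, and in this circle of ideas (Schmitt's book and the swamp papers you cite) semistable reduction is \emph{deduced from} the GIT properness of the Gieseker morphism rather than used to prove it --- so your argument assumes what is to be proved. A smaller but still real issue: even granting a Langton process, you would still need to verify that the modified family satisfies the defining condition of $\QTmp_n$, namely that $H^0(q(n))$ is an isomorphism on the special fibre; this again comes from the GIT-semistability of the prescribed limit point and not from semistability of the limiting decorated tump alone. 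I recommend restructuring the proof around that hypothesis.
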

\begin{proof}
 We check the valuation criterion: Let $R$ be a discrete valuation ring and $K$ its quotient field. We consider a commutative diagram
 \[\xymatrix{
  \Spec(K) \ar[r]^f \ar[d]_p & \QTmp^{\textnormal{ss}}_n \ar[d]^{\gies_n^{\textnormal{ss}}}\\
  \Spec(R) \ar[r]^h &  \Gies_n^{\textnormal{ss}}\rlap{\,.}
  }
 \]
The morphism $f$ corresponds to a family of decorated quotients tumps on $\Spec(K)$. Since $\Quot_n$ is projective the family of $T$-split quotient sheaves extends to a quotient $q_R:Y\otimes\pr_X^*\mc{O}_X(-n)\to F_R$ on $\Spec(R)\times X$. Because $F_R$ may have torsion, we compose the morphism with $F\to E_R:=F_R^{\vee\vee}$. As a reflexive sheaf on the regular surface $\Spec(R)\times X$ the sheaf $E_R$ is locally free. Repeating the construction from Proposition \ref{prop:QTmp_exists} over $\Spec(R)$ instead of $\Quot_n$, we obtain a tuple
\[
 (q_R,\ka_R,N_{1,R},N_{2,R},\phi_R,s_R)
\]
over $\Spec(R)\times X$ inducing the morphism $h$. 

It remains to show that this is a family of decorated quotient tumps. Let $(q:Y\otimes \mc{O}_X(-n)\to E,L,\phi,s)$ be the restriction to the special fiber. Semistability in $\Gies_n$ implies $\dim_{\tup{\ka}}(\ker(H^0(q(n))))\le a_2\delta_2$. The assumption $a_2\delta_2<1$ thus ensures that $H^0(q(n))$ is injective. Let now $Q$ be a $T$-split quotient of $E$ of minimal slope and $U:=\ker(Y\to H^0(Q(n)))$. GIT-semistability with respect to the weighted flag $(U\subset Y,(1))$ and the Le Potier--Simpson estimate from Proposition \ref{prop:LS-estimate}, (i), yield a lower bound for $\mu_{\tup{\ka},\tup{\chi}}(Q)=\mu_{\min}(E)$. Hence, there is an $n_2$ such that for $n\ge n_2$ the bundle $E^t(n)$ is globally generated and $H^1(E^t(n))$ vanishes, $t\in T$. For dimensional reasons, $H^0(q(n))$ is an isomorphism and $q$ is surjective.
\end{proof}

\begin{proof}[Proof of Theorem \ref{thm:moduli_space_of_tumps}]
  The restriction of $\gies_n$ to the semistable locus is proper and injective, hence quasi-finite. According to \cite{EGAIV-3}, 8.11.1, it is also finite, in particular affine. Therefore, by \cite{Ramanathan1996b}, 5.1 Lemma, the projective good quotient $\QTmp_n^{\textnormal{ss}}/\!\!/\SL^{\tup{\ka}}_T(Y)$ and the geometric quotient $\QTmp_n^{\textnormal{s}}/\SL^{\tup{\ka}}_T(Y)$ exist. Now $\QTmp_n^{\textnormal{(s)s}}$ has the local universal property for \sstable{} decorated tumps, the action of $\GL_T(Y)$ is compatible and the notions of S-equivalence agree. The general theory of moduli spaces and GIT implies that these quotients are the desired moduli spaces.
\end{proof}

%%%%%%%%%%%%%%%%%%%%%%%%%%%%%%%%%%%%%%%%%%%%%%%%
\subsection{Asymptotic Stability}
\label{subsec:asymptotic_stability}
In the following, we transfer the results on asymptotic stability of decorated swamps from \S4 in \cite{Beck2014AsymptoticStability} to the setting of decorated tumps.
\begin{definition}
 We call a decorated tump $(E,L,\phi,s)$ \emph{asymptotically $(\delta_2,\tup{\ka},\tup{\chi})$-\sstable{}} if for any weighted flag $(E_\bullet,\tup{\alpha})$ there exists a number $c_1\in \QQ_{>0}$ such that for all $\delta_1\ge c_1$ we have
 \[
  M_{\tup{\ka},\tup{\chi}}(E_\bullet,\tup{\alpha})+ \delta_1\mu_1(E_\bullet,\tup{\alpha},\phi) + \delta_2 \mu_2(E_\bullet,\tup{\alpha},s) (\ge) 0\,.
 \]
\end{definition}
\begin{remark} \label{rem:asympt_stability}
 A decorated tump $(E,L,\phi,s)$ is asymptotically $(\delta_2,\tup{\ka},\tup{\chi})$-\sstable{} if and only if  every weighted flag $(E_\bullet,\tup{\alpha})$ of $E$ satisfies
\begin{enumerate}
 \item \label{rem:asympt_stab_1}
    $\mu_1(E_\bullet,\tup{\alpha},\phi) \ge 0$ and 
 \item \label{rem:asympt_stab_2} 
    $\mu_1(E_\bullet,\tup{\alpha},\phi) =0\quad \Longrightarrow\quad  M_{\tup{\ka},\tup{\chi}}(E_\bullet,\tup{\alpha})+\delta_2\mu_2(E_\bullet,\tup{\alpha},s) (\ge) 0$.
\end{enumerate}
\end{remark}

\begin{proposition} \label{prop:stable=>asympt_stable}
 For fixed $\delta_2$, $\tup{\ka}$ and $\tup{\chi}$ there exists a constant $\Delta_1$, such that for all $\delta_1\ge \Delta_1$ a $(\delta_1,\delta_2,\tup{\ka},\tup{\chi})$-\sstable{} decorated tump is also asymptotically $(\delta_2,\tup{\ka},\tup{\chi})$-\sstable{}.
\end{proposition}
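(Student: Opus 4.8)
The plan is to verify, for every $(\delta_1,\delta_2,\tup{\ka},\tup{\chi})$-semistable decorated tump, the two conditions characterizing asymptotic semistability in Remark \ref{rem:asympt_stability}, provided $\delta_1$ exceeds a constant $\Delta_1$ that depends only on $\delta_2$, $\tup{\ka}$, $\tup{\chi}$ and the type $(\tup{d},l)$. Condition (ii) of that remark comes for free: if a weighted flag $(E_\bullet,\tup{\alpha})$ satisfies $\mu_1(E_\bullet,\tup{\alpha},\phi)=0$, then the semistability inequality degenerates to $M_{\tup{\ka},\tup{\chi}}(E_\bullet,\tup{\alpha})+\delta_2\mu_2(E_\bullet,\tup{\alpha},s)(\ge)0$, which is precisely condition (ii), and this holds for every value of $\delta_1$. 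All the content therefore lies in condition (i), namely that $\mu_1(E_\bullet,\tup{\alpha},\phi)\ge 0$ for all weighted flags once $\delta_1\ge\Delta_1$.

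I would argue by contradiction. Assume $(E,L,\phi,s)$ is $(\delta_1,\delta_2,\tup{\ka},\tup{\chi})$-semistable but some weighted flag has $\mu_1(E_\bullet,\tup{\alpha},\phi)<0$. Since $M_{\tup{\ka},\tup{\chi}}$, $\mu_1$ and $\mu_2$ are homogeneous of degree one in $\tup{\alpha}$, the semistability inequality for this flag reads $\delta_1\cdot\bigl(-\mu_1(E_\bullet,\tup{\alpha},\phi)\bigr)\le M_{\tup{\ka},\tup{\chi}}(E_\bullet,\tup{\alpha})+\delta_2\mu_2(E_\bullet,\tup{\alpha},s)$. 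Because $\delta_2$ is fixed and, by \eqref{eq:tmp_stab2} together with $0\le\nu_j(\tup{i})\le a_2$, the term $\delta_2\mu_2$ is bounded above by $\delta_2 a_2\sum_j\alpha_j(r-\rk_{\tup{\ka}}(E_j))$, it contributes only a harmless, degree-independent quantity; the crux is to control $M_{\tup{\ka},\tup{\chi}}(E_\bullet,\tup{\alpha})$ against $-\mu_1(E_\bullet,\tup{\alpha},\phi)$ by a constant that is independent of the tump and of $\delta_1$.

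The main obstacle, and the heart of the argument, is that one cannot use an arbitrary destabilizing flag: a $T$-split subbundle of very negative degree makes $M_{\tup{\ka},\tup{\chi}}$ arbitrarily large and positive, so the inequality above would impose no bound on $\delta_1$. The remedy is to pass to a canonical witnessing flag coming from the instability structure of $\phi$. By \eqref{eq:tmp_stab1}, $\mu_1(E_\bullet,\tup{\alpha},\phi)<0$ forces $\phi\circ p_1$ to vanish on the tensor pieces $E^{\otimes\tup{i}}$ with $\nu_j(\tup{i})$ large, so that $\phi$ factors through quotients built from the $E/E_j$. Since $\phi$ is a non-trivial homomorphism into the fixed line bundle $L$ of degree $l$, and $E_\rho$ is a direct summand of $E^{\oplus\tup{\ka}}_{a_1,b_1,c_1}$ for the homogeneous representation $\rho$, the degrees of these quotients are bounded above, and hence the degrees of the saturated subbundles generically generated by the $E_j$ are bounded below, by constants depending only on $\tup{d}$, $l$ and $\rho$. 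Replacing the $E_j$ by these saturations leaves $\mu_1$ and the whole combinatorial type unchanged, since they depend only on the flag of generic fibres at $\eta$, while bounding each $\deg_{\tup{\ka},\tup{\chi}}(E_j)$ from below and hence $M_{\tup{\ka},\tup{\chi}}(E_\bullet,\tup{\alpha})$ from above; the $\mu_2$-bound depends only on the ranks and is unaffected. This degree estimate is exactly the one established for swamps in \cite{Beck2014AsymptoticStability}, \S4, and transfers verbatim to the $T$-split setting, yielding a constant $B$, depending only on $\delta_2,\tup{\ka},\tup{\chi}$ and $(\tup{d},l)$, with $M_{\tup{\ka},\tup{\chi}}(E_\bullet,\tup{\alpha})+\delta_2\mu_2(E_\bullet,\tup{\alpha},s)\le B\cdot\bigl(-\mu_1(E_\bullet,\tup{\alpha},\phi)\bigr)$ for the witnessing flag.

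Combining this with the semistability inequality gives $\delta_1\le B$, so setting $\Delta_1:=B$ makes any $\delta_1>\Delta_1$ contradict semistability. Hence no weighted flag with $\mu_1<0$ exists, condition (i) of Remark \ref{rem:asympt_stability} holds, and together with the automatic condition (ii) the tump is asymptotically $(\delta_2,\tup{\ka},\tup{\chi})$-semistable. The same computation with strict inequalities handles the stable case. I expect the transfer of the degree estimate from the swamp setting to be the only genuinely technical step: the $T$-split bookkeeping and the extra decoration $s$ contribute only quantities that remain uniformly bounded once $\delta_2$ is fixed.
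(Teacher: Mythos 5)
Your proposal is correct and follows essentially the same route as the paper: reduce everything to condition (i) of Remark \ref{rem:asympt_stability}, argue by contradiction, replace an arbitrary destabilizing flag by the canonical one induced by the instability one-parameter subgroup of $[\phi_{|\eta}]$, and bound $M_{\tup{\ka},\tup{\chi}}$ by the degree estimate of \cite{Beck2014AsymptoticStability}, \S 4, made uniform by the finiteness of conjugacy classes of instability one-parameter subgroups. The paper states the key bound as $M_{\tup{\ka},\tup{0}}(E_\bullet,\tup{\alpha})\le -l\norm{\la}_{\tup{\ka}}^2/\mu_0$ together with $\mu_0\le -1$ rather than in your form $M_{\tup{\ka},\tup{\chi}}+\delta_2\mu_2\le B\cdot(-\mu_1)$, but the two formulations yield the same threshold $\Delta_1$.
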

 \begin{proof}
 Let $(E,L,\phi,s)$ be a $(\delta_1,\delta_2,\tup{\ka},\tup{\chi})$-\sstable{} decorated tump. Condition \ref{rem:asympt_stab_2} of Remark \ref{rem:asympt_stability} obviously holds. Assume that condition \ref{rem:asympt_stab_1} is not satisfied. Let $\eta$ be the generic point of $X$, $K$ the function field of $X$ and set $\EE:=E_{|\eta}$ and $\EE_\rho:=E_{\rho|\eta}$. By assumption the point $[\phi_\eta]\in\PP(\EE_\rho)$ is unstable with respect to the action of
 $\SL_T^{\tup{\ka}}(\EE)$, and we let $\Lambda$ be an instability one-parameter subgroup (see \cite{RamRam84}, Def. 1.6). This one-parameter subgroup determines a weighted flag $(E_\bullet,\tup{\alpha})$ of $E$ with $\mu_0:=\mu_1(E_\bullet,\tup{\alpha},\phi)=\mu(\Lambda,[\phi_\eta])\le -1$. 

Let $W_\bullet$ be a $T$-split flag of $W=(\CC^{r_t},t\in T)$ of the same type as $E_\bullet$ and choose an open subset $U$ of $X$ and a trivialization $E_{|U}\cong W\times U$ with $E_{j|U}\cong W_j\times U$ for all $1\le j \le \len(E_\bullet)$. This induces an isomorphism $\SL_T^{\tup{\ka}}(\EE)\cong \SL_T^{\tup{\ka}}(W)\times \Spec(K)$ such that there exists a one-parameter subgroup $\la$ of $\SL_T^{\tup{\ka}}(W)$ with $\Lambda\cong \la\times \id_{\Spec(K)}$
 
Let $D$ be the torus of diagonal matrices in $\GL(W^{\oplus\tup{\ka}})$. The identification $X_*(D)\cong \bigoplus_{t\in T}\ZZ^{\ka_t r_t}$ and the standard pairing induce a pairing on $X_*(D)$. This defines a norm $\norm{\cdot}_{\tup{\ka}}$ on the set of one-parameter subgroups of $\SL_T^{\tup{\ka}}(W)$ (see \cite{Schmitt08}, \S 1.7.2). As in \cite{Beck2014AsymptoticStability}, Proposition 4.3, one finds 
 \[
  M_{\tup{\ka},\tup{0}}(E_\bullet,\tup{\alpha})\le -\frac{l\norm{\la}_{\tup{\ka}}^2}{\mu_0}\le l\norm{\la}_{\tup{\ka}}^2\,.
 \]
Since there are only finitely many conjugacy classes of possible instability one-parameter subgroups, we can find constants $C$, $C'$ and $C_{\tup{\chi}}$ such that
\begin{align*}
 C&\,\ge \norm{\la}_{\tup{\ka}}^2\,, \qquad C'\ge \sum_{j=1}^{\len(E_\bullet)} \alpha_j \left(\rk_{\tup{\ka}}(E)-\rk_{\tup{\ka}}(E_j)\right)\,,\\
 C_{\tup{\chi}}&\,\ge \sum_{j=1}^{\len(E_\bullet)} \alpha_j\left( \rk_{\tup{\chi}}(E)\rk_{\tup{\ka}}(E_j)-\rk_{\tup{\chi}}(E)\rk_{\tup{\ka}}(E_j)\right)\,.
\end{align*}
The $(\delta_1,\delta_2,\tup{\ka},\tup{\chi})$-semistability implies
\begin{align*}
    Cl+C_{\tup{\chi}} -\delta_1 + a_2\delta_2C' \ge 
   M_{\tup{\ka},\tup{\chi}}(E_\bullet,\tup{\alpha})+\delta_1\mu_1(E_\bullet,\tup{\alpha},\phi)+\delta_2\mu_2(E_\bullet,\tup{\alpha},s)\ge 0
\end{align*}
Thus, we get a contradiction for $\delta_1>\Delta_1:= Cl+C_{\tup{\chi}} + a_2\delta_2C' $.
 \end{proof}
 
\begin{proposition} \label{prop:asympt_bounded}
 There is a constant $C$ such that every asymptotically $(\delta_2,\tup{\ka},\tup{\chi})$-semistable decorated tump $(E,L,\phi,s)$ of type $(\tup{d},l)$ and every non-trivial $T$-split subbundle $F\subset E$ satisfies
 \[
  \mu_{\tup{\ka},\tup{\chi}}(F)\le \mu_{\tup{\ka},\tup{\chi}}(E)+C\,.
 \]
\end{proposition}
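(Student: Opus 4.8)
The plan is to reduce to the maximal destabilizing subbundle and then split according to the characterization of asymptotic semistability in Remark~\ref{rem:asympt_stability}. Since $\mu_{\tup{\ka},\tup{\chi}}(F)\le\mu_{\max}(E)$ for every $T$-split subbundle $F$, and the Harder--Narasimhan filtration of a $T$-split vector bundle is again built from $T$-split subbundles, it is enough to bound $\mu_{\tup{\ka},\tup{\chi}}(F)$ for the maximal destabilizing subbundle $F=E_1$, which is $(\tup{\ka},\tup{\chi})$-semistable with $\mu_{\tup{\ka},\tup{\chi}}(F)=\mu_{\max}(E)$. Writing $r:=\rk_{\tup{\ka}}(E)$, I apply Remark~\ref{rem:asympt_stability} to the weighted flag $(F\subset E,(1))$: condition~\ref{rem:asympt_stab_1} gives $\mu_1(F\subset E,(1),\phi)\ge 0$, and I distinguish the cases $\mu_1=0$ and $\mu_1>0$.

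First I would treat the case $\mu_1(F\subset E,(1),\phi)=0$. Then condition~\ref{rem:asympt_stab_2} yields $M_{\tup{\ka},\tup{\chi}}(F\subset E,(1))+\delta_2\mu_2(F\subset E,(1),s)\ge 0$. From \eqref{eq:tmp_stab2} one reads off $\mu_2(F\subset E,(1),s)\le a_2(r-1)$, while a direct computation gives $M_{\tup{\ka},\tup{\chi}}(F\subset E,(1))=r\,\rk_{\tup{\ka}}(F)\bigl(\mu_{\tup{\ka},\tup{\chi}}(E)-\mu_{\tup{\ka},\tup{\chi}}(F)\bigr)$. Together these force $\mu_{\tup{\ka},\tup{\chi}}(F)\le\mu_{\tup{\ka},\tup{\chi}}(E)+\delta_2 a_2(r-1)$, a bound of the required shape that is independent of the tump.

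The case $\mu_1(F\subset E,(1),\phi)>0$ carries the actual content. By \eqref{eq:tmp_stab1} there is a tuple $\tup{i}\in I_1$ with $(\phi\circ p_1)_{|E^{\otimes\tup{i}}}\neq 0$ for which the number $k:=\nu_1(\tup{i})$ of tensor factors equal to $F$ satisfies $k>a_1\rk_{\tup{\ka}}(F)/r$. A non-trivial homomorphism from $E^{\otimes\tup{i}}$ to the line bundle $L$ of degree $l$ forces $\mu_{\min}(E^{\otimes\tup{i}})\le l$, and since we work over $\CC$ the minimal slope is additive under tensor products. As $E^{\otimes\tup{i}}$ has $k$ factors $F^{\oplus\tup{\ka}}$, $a_1-k$ factors $E^{\oplus\tup{\ka}}$ and a twist by $(\bigwedge^r E^{\oplus\tup{\ka}})^{\otimes-c_1}$, this unwinds to
\[
 k\,\mu_{\min}(F^{\oplus\tup{\ka}})+(a_1-k)\,\mu_{\min}(E^{\oplus\tup{\ka}})-c_1\deg(E^{\oplus\tup{\ka}})\le l\,.
\]
Using Lemma~\ref{lem:Split_bundle_semistable} to compare the slopes of the total bundles with the $(\tup{\ka},\tup{\chi})$-slopes (they differ only by the bounded quantities $\chi_t/\ka_t$) and the semistability of $F$, this becomes a linear inequality relating $\mu_{\max}(E)=\mu_{\tup{\ka},\tup{\chi}}(F)$ and $\mu_{\min}(E)$.

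The hard part will be to turn this inequality into an upper bound on $\mu_{\tup{\ka},\tup{\chi}}(F)$ alone. The factors $E^{\oplus\tup{\ka}}$ contribute $\mu_{\min}(E)$, which the fixed type $(\tup{d},l)$ — hence fixed $\deg_{\tup{\ka},\tup{\chi}}(E)$ — relates to $\mu_{\max}(E)$ only through the ranks of the Harder--Narasimhan quotients; the naive estimate $\mu_{\min}(E)\ge\deg_{\tup{\ka},\tup{\chi}}(E)-\mu_{\max}(E)(r-1)$ is too lossy and fails to close once $E$ has more than two Harder--Narasimhan steps. To bound $\mu_{\max}(E)$ one must therefore test asymptotic semistability against the full Harder--Narasimhan filtration, not just the single flag $(F\subset E,(1))$, and exploit that the strict inequality $k>a_1\rk_{\tup{\ka}}(F)/r$ forced by $\mu_1>0$ survives the finitely many possible rank configurations so as to leave a positive coefficient for $\mu_{\max}(E)$. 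This is exactly the step that transfers from the corresponding boundedness statement in \cite{Beck2014AsymptoticStability}, and I expect the bookkeeping of the Harder--Narasimhan ranks to be the one genuinely delicate point.
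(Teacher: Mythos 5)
Your first case ($\mu_1=0$) is fine, but the second case contains the genuine gap, and you have in fact located it yourself: when $\mu_1(F\subset E,(1),\phi)>0$, asymptotic semistability gives no inequality whatsoever for that flag (condition \ref{rem:asympt_stab_2} of Remark \ref{rem:asympt_stability} is vacuous there), so everything must be extracted from the bare existence of the nonzero map $(\phi\circ p_1)_{|E^{\otimes\tup{i}}}$. The inequality you derive places a coefficient $k$ in front of $\mu_{\max}(E)$ with only $k>a_1\rk_{\tup{\ka}}(F)/r$, which does not force $k=a_1$; after substituting the only available lower bound for $\mu_{\min}(E)$ in terms of the fixed degree and $\mu_{\max}(E)$, the resulting coefficient of $\mu_{\max}(E)$ can be negative (already for $k=1$, $a_1=2$, $r=3$), so no upper bound follows. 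Your hope that testing against the full Harder--Narasimhan filtration repairs this is not substantiated and is not how the cited boundedness result works: for any flag with $\mu_1>0$ asymptotic semistability still says nothing, so no bookkeeping of Harder--Narasimhan ranks can recover an inequality.

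The missing idea is of a different nature. The paper pairs the subbundle $F\subset E^t$ with $\phi$ to obtain a point of $\PP\bigl(\bigwedge^{\rk(E^t)-\rk(F)}\EE\bigr)\times_\eta\PP(\EE_\rho)$ over the generic point, linearized in $\mc{O}(1,n)$, and invokes Proposition 2.9 of \cite{Beck2014AsymptoticStability}: for $n$ large (depending only on $t$ and $\rk(F)$), every instability one-parameter subgroup of an unstable pair whose second component is semistable has vanishing $\mu$ on that second component. Since asymptotic semistability forces $[\phi_\eta]$ to be semistable, the dichotomy becomes: either the pair is generically semistable, in which case the pullback of $\mc{O}(1,n)$ is $\det(E^t/F)\otimes L(-D)^{\otimes n}$ of non-negative degree and $\deg(F)\le\deg(E^t)+nl$ directly; or the instability one-parameter subgroup produces a weighted flag with $\mu_1=0$, landing back in your first case where condition \ref{rem:asympt_stab_2} of Remark \ref{rem:asympt_stability} applies, together with the Kempf-type inequality $\deg(E^t)-\deg(F)+nl+m_0M_{\tup{\ka},\tup{0}}(E_\bullet,\tup{\alpha})\ge 0$. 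The twist by $n$ in the linearization is precisely the device that converts the problematic case $\mu_1>0$ into a test flag on which asymptotic semistability does bite; without it, or an equivalent mechanism, your argument cannot be closed.
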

\begin{proof}
 Let $(E,L,\phi,s)$ be an asymptotically $(\delta_2,\tup{\ka},\tup{\chi})$-semistable decorated tump. Denote by $\eta$ the generic point if $X$ and let $\EE^t:=E^t_{|\eta}$, $t\in T$, and $\EE_\rho:=E_{\rho|\eta}$ be the restrictions. For $t\in T$ and $1\le r'<\rk(E^t)$ we consider the space
 \[
  P_{t,r'}:= \PP\left(\bigwedge^{\rk(E^t)-r'} \EE \right)\times_\eta \PP(\EE_{\rho})\,.
 \]
By Proposition 2.9 in \cite{Beck2014AsymptoticStability} there is an $n_0(t,r')$ such that for $n\ge n_0(t,r')$ a point $(x,y)\in P_{t,r'}$ which is unstable with respect to the linearization in $\mc{O}_{P_{t,r'}}(1,n)$, but where $y$ is semistable, every instability one-parameter subgroup $\la$ of $(x,y)$ satisfies $\mu(\la,y)=0$. We choose
\[
 n:=\max\{n_0(t,r')\,|\, t\in T, 1\le r'\le \rk(E^t)\}\,.
\]

Let now $t\in T$ be an index and $F$ a subbundle of $E^t$. Together with $\phi$ this defines a morphism
\[
 f: X \to \PP\left(\bigwedge^{\rk(E^t)-\rk(F)} E \right)\times_X \PP(E_{\rho})\,.
\]
If $f$ is generically semistable, the line bundle
\[
 f^* \mc{O}_{\PP\left(\bigwedge^{\rk(E^t)-\rk(F)} E \right)\times_X \PP(E_{\rho})}(1,n)=\det(E^t/F)\otimes L(-D)^{\otimes n}
\]
with some effective divisor $D$ on $X$ has non-negative degree, so that $\deg(F)\le \deg(E^t)+nl$. If $f$ is generically unstable, there is a one-parameter subgroup $\la$ of $\SL(\EE)$ inducing a weighted flag $(E_\bullet,\tup{\alpha})$ of $E$ with
\[
 \deg(E^t)-\deg(F)+nl+m_0M_{\tup{\ka},\tup{0}}(E_\bullet,\tup{\alpha})\ge 0\,.
\]
Due to our choice of $n$ we have $\mu_1(E_\bullet,\tup{\alpha},\phi)=0$, so asymptotic $(\delta_2,\tup{\ka},\tup{\chi})$-stability implies 
\[
 M_{\tup{\ka},\tup{\chi}}(E_\bullet,\tup{\alpha})+\delta_2\mu_2(E_\bullet,\tup{\alpha},s) \ge 0\,.
\]
Since there are only finitely many conjugacy classes of possible instability one-parameter subgroups, we can find constants $C_{\tup{\chi}}$ and $C'$ as in the proof of Proposition \ref{prop:stable=>asympt_stable}. Then, in any case we have
\[
 \deg(F)\le \deg(E^t)+nl -m_0( C+ a_2C'\delta_2)\,.
\]
The claim is an easy consequence of this.
\end{proof}

\begin{theorem} \label{thm:asymptotic_stability}
 There is constant $\Delta$ such that for all $\delta_1>\Delta$ a decorated tump of type $(\tup{d},l)$ is $(\delta_1,\delta_2,\tup{\ka},\tup{\chi})$-\sstable{} if and only if it is asymptotically $(\delta_2,\tup{\ka},\tup{\chi})$-\sstable.
\end{theorem}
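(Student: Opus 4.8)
The plan is to prove both implications by exploiting the characterization of asymptotic stability given in Remark \ref{rem:asympt_stability}, namely that asymptotic $(\delta_2,\tup{\ka},\tup{\chi})$-\sstable{} is equivalent to demanding $\mu_1(E_\bullet,\tup{\alpha},\phi)\ge 0$ for all weighted flags, together with the condition $M_{\tup{\ka},\tup{\chi}}(E_\bullet,\tup{\alpha})+\delta_2\mu_2(E_\bullet,\tup{\alpha},s)(\ge)0$ on those flags for which $\mu_1$ vanishes. One direction is essentially Proposition \ref{prop:stable=>asympt_stable}: taking $\Delta\ge \Delta_1$ guarantees that a $(\delta_1,\delta_2,\tup{\ka},\tup{\chi})$-\sstable{} decorated tump is asymptotically \sstable{}. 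So the real content of the theorem is the converse, and the main task is to produce a uniform bound $\Delta$ that works for the whole (bounded) family of asymptotically \sstable{} tumps simultaneously.

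First I would fix an asymptotically $(\delta_2,\tup{\ka},\tup{\chi})$-\sstable{} decorated tump $(E,L,\phi,s)$ and an arbitrary weighted flag $(E_\bullet,\tup{\alpha})$, and split into the two cases dictated by the sign of $\mu_1(E_\bullet,\tup{\alpha},\phi)$. If $\mu_1(E_\bullet,\tup{\alpha},\phi)>0$, then since $\delta_1>0$ the term $\delta_1\mu_1$ is positive, so it suffices to bound $M_{\tup{\ka},\tup{\chi}}(E_\bullet,\tup{\alpha})+\delta_2\mu_2(E_\bullet,\tup{\alpha},s)$ from below by a quantity controlled by $\delta_1\mu_1$. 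Here I would use that $\mu_1$ takes values in a discrete set with a positive minimal positive value on any bounded family, so a single large $\delta_1$ dominates the (uniformly bounded below) remaining terms. If instead $\mu_1(E_\bullet,\tup{\alpha},\phi)=0$, the definition of asymptotic stability directly gives $M_{\tup{\ka},\tup{\chi}}(E_\bullet,\tup{\alpha})+\delta_2\mu_2(E_\bullet,\tup{\alpha},s)(\ge)0$, which is exactly the required inequality, independently of $\delta_1$. Remark \ref{rem:asympt_stability}\eqref{rem:asympt_stab_1} rules out the case $\mu_1<0$ entirely, which is the feature that makes the asymptotic notion well-behaved.

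The crux, and the main obstacle, is establishing that the relevant quantities are uniformly bounded and that $\mu_1$ is bounded away from zero when positive, \emph{uniformly over the family}. For this I would invoke Proposition \ref{prop:asympt_bounded}, which bounds $\mu_{\tup{\ka},\tup{\chi}}(F)-\mu_{\tup{\ka},\tup{\chi}}(E)$ for all subbundles $F$ of all asymptotically \sstable{} tumps by a single constant $C$; this yields boundedness of the family and hence only finitely many possibilities for the combinatorial data (the ranks and degrees of the steps $E_j$, and the finitely many conjugacy classes of instability one-parameter subgroups, cf. the arguments in Propositions \ref{prop:stable=>asympt_stable} and \ref{prop:asympt_bounded}). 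From these finitely many possibilities I extract a lower bound $-B$ for $M_{\tup{\ka},\tup{\chi}}(E_\bullet,\tup{\alpha})+\delta_2\mu_2(E_\bullet,\tup{\alpha},s)$ and a positive lower bound $\varepsilon$ for $\mu_1$ whenever $\mu_1>0$; normalizing $\tup{\alpha}$ suitably (e.g.\ by scaling so the relevant weights are integral) makes both $B$ and $\varepsilon$ independent of the particular flag up to the finite combinatorial type.

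Finally I would set $\Delta:=\max\{\Delta_1, B/\varepsilon\}$ and verify the stability inequality: in the case $\mu_1>0$ one has $M_{\tup{\ka},\tup{\chi}}+\delta_1\mu_1+\delta_2\mu_2\ge -B+\delta_1\varepsilon>0$ for all $\delta_1>\Delta$, while the case $\mu_1=0$ is handled directly by asymptotic stability as above; together these give $(\delta_1,\delta_2,\tup{\ka},\tup{\chi})$-\sstable{}. Combined with Proposition \ref{prop:stable=>asympt_stable} for the reverse implication, this establishes the equivalence for all $\delta_1>\Delta$. The semistable and stable versions are treated in parallel by carrying the $(\ge)$ convention through each step, so no separate argument is needed.
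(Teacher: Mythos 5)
Your proposal is correct and follows essentially the same route as the paper: the paper also reduces to a finite set $U$ of flag types (ranks and weights), uses Proposition \ref{prop:asympt_bounded} to bound $M_{\tup{\ka},\tup{\chi}}$ below by a constant $-M_0$ and the combinatorial estimates to bound $\delta_2\mu_2$ below by $-a_2\delta_2 C''$, observes via a common denominator $m$ that $\mu_1>0$ forces $\mu_1\ge 1/m$, and then takes $\delta_1$ larger than $\max\{\Delta_1, m(M_0+a_2\delta_2C'')\}$, which is exactly your $\max\{\Delta_1, B/\varepsilon\}$. The case split on the sign of $\mu_1$, the appeal to Remark \ref{rem:asympt_stability} for the $\mu_1=0$ case, and the use of Proposition \ref{prop:stable=>asympt_stable} for the converse direction all coincide with the paper's argument.
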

\begin{proof}
 There is a finite set $U$ of tuples $(\tup{r}_t,t\in T,\tup{\alpha})$ with $l\in\NN$, $\tup{r}_t\in \NN^l$, $t\in T$, and $\tup{\alpha}\in \QQ_{>0}^l$,  such that $(\delta_1,\delta_2,\tup{\ka},\tup{\chi})$-(semi\nobreakdash-)stability of a decorated tump has to be checked for weighted flags $(E_\bullet,\tup{\alpha})$ with $(\tup{r}^t(E_\bullet),t\in T,\tup{\alpha})\in U$. We define
 \[
  C'':=\max\left\{\sum_{j=1}^{\len(\tup{\alpha})} \alpha_j \sum_{t\in T}\ka_t r_{t,j}\,\biggm|\,(\tup{r}_t,t\in T,\tup{\alpha})\in U \right\}\,.
 \]
 Let $C$ be the constant from Proposition \ref{prop:asympt_bounded}. We set
 \[
 M_0:= \max\left\{C\sum_{j=1}^{\len(\tup{\alpha})} \alpha_j r\sum_{t\in T}\ka_t r_{t,j}\,\biggm|\,(\tup{r}_t,t\in T,\tup{\alpha})\in U  \right\}\,.
 \]
 Finally let $m$ be an integer such that $mr\alpha$ is integral for all $(\tup{r}_t,t\in T,\tup{\alpha})\in U$.
 
Now suppose $\delta_1 >m(M_0+a_2\delta_2C'')$, let $(E,L,\phi,s)$ be an asymptotically $(\delta_2,\tup{\ka},\tup{\chi})$-\sstable{} decorated tump and $(E_\bullet,\tup{\alpha})$ a weighted flag whose type lies in $U$. If $\mu_1(E_\bullet,\tup{\alpha},\phi)=0$ holds, then $(\delta_1,\delta_2,\tup{\ka},\tup{\chi})$-(semi\nobreakdash-)stability with respect to $(E_\bullet,\tup{\alpha})$ follows from Remark \ref{rem:asympt_stability}, (ii). Otherwise, we have $\mu_1(E_\bullet,\tup{\alpha},\phi)\ge 1/m$ and hence
 \[
  M_{\tup{\ka},\tup{\chi}}(E_\bullet,\tup{\alpha})+\delta_1\mu_1(E_\bullet,\tup{\alpha},\phi)+\delta_2\mu_2(E_\bullet,\tup{\alpha},s)\ge -M_0+\frac{\delta_1}{m} -a_2\delta_2 C''> 0\,.
 \]
Together with Proposition \ref{prop:stable=>asympt_stable} this proves the claim for $\delta_1>\max\{\Delta_1,-m(M_0+a_2\delta_2C'')\}$.
\end{proof}

%%%%%%%%%%%%%%%%%%%%%%%%%%%%%%%%%%%%%%%%%%%%%%%%%%%
 %%%%%%%%%%%%%%%%%%%%%%%%%%%%%%%%%%%%%%%%%%%%%%%%%%
\section{Moduli of Decorated Principal Bundles}
\label{sec:Moduli_of_PB}
This section contains the construction of the moduli space of decorated principal bundles. The idea is to describe stable decorated principal bundles as asymptotically stable decorated tumps and then use the results from Section \ref{sec:Moduli_of_tumps}.

%%%%%%%%%%%%%%%%%%%%%%%%%%%%%%%%%%%%%%%%%%%%%%%%%%%%%%
\subsection{Stable Decorated Principal Bundles}
Let $X$ be a smooth projective curve over the complex numbers, $x_0$ a point in $X$, $G$ an affine reductive group and $\sigma:G\to \GL(V)$ a representation.
\begin{definition}
 A \emph{$\sigma$-decorated principal $G$-bundle} is a pair $(\PB,s)$, where $\PB$ is a principal $G$-bundle on $X$ and $s$ is a point in the fiber over $x_0$ of the associated projective bundle $\PP(\PB_{\sigma})$.
\end{definition}
An isomorphism $f:(\PB,s)\to (\PB',s')$ of decorated principal bundles is an isomorphism $f:\PB\to \PB'$ with $f_\sigma(s')=s$, where $f_\sigma$ is the induced isomorphism $\PP(\PB'_\sigma)_{|x_0}\to \PP(\PB_\sigma)_{|x_0}$.

Since $G$ is affine, there is faithful representation $G\to \GL(W')$. We obtain an embedding $G\to \SL(W)$ with
\[
 W:= W'\oplus \left(\bigwedge^{\dim(W')}W'\right)^\vee\,.
\]
Since $G$ is also reductive, its radical $\rad(G)$ is a torus (see \cite[\S 11.21]{Bor91}). We thus find a decomposition $W=\bigoplus_{t\in T}W^t$ and characters $\chi_t$, $t\in T$, such that $g\cdot w=\chi_t (g)w$ for all $w\in W^t$ and $g\in \rad(G)$. Because the radical is contained in the center, the spaces $W^t$ are $G$-invariant. We get an embedding
\[
 \ka :G\to \bigoplus_{t\in T} \GL(W^t) \cap \SL(W)=:\SL_T(W)\,.
\]
Via $\ka$ the datum of a principal $G$-bundle $\PB$ is equivalent to the datum of a $T$-split vector bundle $E$ with $\rk(E^t)=\dim(W^t)$, $t\in T$ and $\bigotimes_{t\in T} \det(E^t)=\mc{O}_X$ together with a reduction of the structure group $\tau:X\to \Iso_T(W,E)/G$ with
\[
 \Iso_T(W,E):= \bigoplus_{t\in T}\Iso(W^t\otimes\mc{O}_x,E^t)\,.
\]
We call $\tup{d}=(\deg(E^t),t\in T)$ the \emph{type} of $\PB$.

Recall that the associated parabolic subgroup of a one-parameter subgroup $\la:\CC^*\to G$ is
\[
 Q_G(\la):=\{g\in G\,|\, \lim_{z\to \infty} \la(z)g\la^{-1}(z) \text{ exists in } G\}\,.
\]

\begin{definition}
 A \emph{weighted reduction} of a principal bundle $\PB$ is a pair $(\la,\beta)$, where $\la$ is a one-parameter subgroup of $G$ and $\beta:X\to \PB/Q_G(\la)$ is a reduction of the structure group.
\end{definition}

Let $(\PB,s)$ be a decorated principal bundle and $(\la,\beta)$ a weighted reduction. Fix a torus $T\subset G$ such that $\la\in X_*(T)$. The embedding $\ka$ induces a pairing on $X_*(T)$ and $X^*(T)$. There is a unique character $\chi_\la\in X^*(T)$ such that $\langle \chi_\la,\la'\rangle=(\la,\la')$ for all $\la'\in X_*(T)$. This character extends to a character on $Q_{\SL_T(\tup{r})}(\ka_*\la)$, which we also denote by $\chi_\la$. One can check, that $\chi_\la$ is independent of the choice of the torus $T$.

The section $\beta$ determines a trivialization $f:\PP(\PB_{\sigma})_{|x_0}\to \PP(V)$ up to an element $g\in Q_G(\la)$. We define
\[
 \mu(\la,\beta,s):=\mu_\sigma(\la,f(s))\,.
\]

\begin{definition}
 Let $\delta\in\QQ_{>0}$ and $\chi\in X^*(\SL_T(W))$. We call a $\sigma$-decorated principal bundle $(\PB,s)$ \emph{$(\delta,\chi)$-\sstable{}} if every weighted reduction $(\la,\beta)$ of $\PB$ satisfies
 \[
  \deg(\beta^*\PB\times^{\chi_\la} \CC) + \delta \mu(\la,\beta,s)+ \langle \chi,\ka_*\la\rangle (\ge) 0\,.
 \]
\end{definition}

%%%%%%%%%%%%%%%%%%%%%%%%%%%%%%%%%%%%%%%%%%%%%%%%
\subsection{S-Equivalence of Decorated Principal Bundles}
Let $(\PB,s)$ be a $(\delta,\chi)$-semistable decorated principal bundle and $(\la,\beta)$  a weighted reduction. We denote by $\pi:Q_G(\la)\to L_G(\la)$ the projection to the Levi factor and by $i:L_G(\la)\to G$ the inclusion. Set $\PB':=i_*\pi_*\beta^*\PB$. The reduction $X\to P/Q_G(\la)\to \sigma_*P/Q_{\GL(V)}(\sigma_*\la)$ determines a flag $F_\bullet$ of $\PB_\sigma$, and we have
\[
 \PB'_\sigma =\bigoplus_{j=0}^{\len(F_\bullet)} F_{j+1}/F_{j}\,.
\]
Set $j_0:=\min\{1\le j \le \len(F_\bullet)\,|\, s_{|{F_j}_{|x_0}}\neq 0\}$, so that $s$ induces a point $s'\in \PP(F_{j_0}/F_{j_0-1})_{\{x_0\}}\subset \PP(\PB'_\sigma)_{\{x_0\}}$. We call
\[
 \df_{(\la,\beta)}(\PB,s):=(\PB',s')
\]
the \emph{admissible deformation} of $(\PB,s)$ along $(\la,\beta)$.

\begin{definition}
We call a weighted reduction $(\la,\beta)$ \emph{critical} if the condition
\[
  \deg(\beta^*\PB\times^\chi_\la \CC) + \delta \mu(\la,\beta,s)+ \langle \chi,\ka_*\la\rangle =0
\]
holds.
\end{definition}
\begin{definition}
 We define \emph{S-equivalence} as the equivalence relation generated by isomorphisms and
 \[
  (\PB,s)\sim_S \df_{(\la,\beta)}(\PB,s)
 \]
for all critical weighted reductions $(\la,\beta)$ of $\PB$.
\end{definition}
%%%%%%%%%%%%%%%%%%%%%%%%%%%%%%%%%%%%%%%%%%%%%%%%
\subsection{Parametrized Families of Decorated Principal Bundles}
\begin{definition}
 A \emph{family of decorated principal bundles} of type $\tup{d}$ parametrized by a scheme $S$ is a triple $\mc{F}=(\PB_S,N_S,s_S)$, where $\PB_S$ is a principal bundle on $S\times X$, such that for every $s\in S$ the bundle $\PB_{S|\{s\}\times X}$ is of type $\tup{d}$, $N_S$ is a line bundle on $S$ and $s_S:\PB_{\sigma|S\times\{x_0\}}\to N_S$ is a surjective homomorphism.
 
 Two parametrized families $\mc{F}$ and $\mc{F}'$ are \emph{isomorphic} if there are isomorphisms $f:\PB_S\to\PB'_S$ and $h:N_S\to N'_S$ such that $s'_S\circ f_{\sigma|S\times\{x_0\}} = h\circ s_S$.
\end{definition}
If $\mc{F}$ is family of decorated principal bundles parametrized by $S$, then $\mc{F}$ defines an isomorphism class of decorated principal bundles $\mc{F}_s$ for every point $s\in S$. We call $\mc{F}$ \emph{$(\delta,\chi)$-\sstable{}} if $\mc{F}_s$ is so for all $s\in S$.

\begin{remark}
 By \cite{Schmitt08}, Theorem 1.1.6.1, there is a representation $\sigma':\GL_T(W)\to \GL(V')$, such that $\sigma$ is a direct summand of $\sigma'\circ \ka$. This induces an inclusion $\PP(\PB_\sigma)\subset \PP(\PB_{\sigma'\circ \ka})$. Without loss of generality we may thus assume that $\sigma$ is induced by a representation $\GL_T(W)\to \GL(V)$, which we also denote by $\sigma$. For technical reasons we have to assume that $\sigma$ is homogeneous. By Proposition \ref{prop:W_abc} there are numbers $a,b,c$ such that $V$ is a direct summand of $W_{a,b,c}$.
\end{remark}

%%%%%%%%%%%%%%
\begin{theorem} \label{thm:moduli_space_of_dec_PB}
 Let $G\subset \GL_T(W)$ be an affine reductive group and $\sigma$ a direct summand of the natural representation $\GL_T(W)\to\GL(W_{a,b,c})$. For $a\delta<1$ the \textup{(}projective\textup{)} moduli space of $(\delta,\chi)$-\sstable{} $\sigma$-decorated principal bundles with structure group $G$ exists. 
\end{theorem}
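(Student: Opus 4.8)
The plan is to reduce the existence of the moduli space of decorated principal bundles to the already-established Theorem~\ref{thm:moduli_space_of_tumps} on decorated tumps. Via the embedding $\ka\colon G\to \SL_T(W)$ fixed above, a principal $G$-bundle $\PB$ is encoded as a $T$-split vector bundle $E$ together with a reduction $\tau\colon X\to \Iso_T(W,E)/G$ of the structure group. The key technical device, as signalled in the Outline, is to encode this reduction not directly but as a section $\phi$ of an associated projective bundle built from a \emph{homogeneous} representation $\rho$ of $\GL_T(W)$ whose stabilizer (or rather whose associated parabolic data) recovers $G\subset \SL_T(W)$. Concretely, one chooses $\rho$ so that giving a $G$-reduction of $E$ is equivalent to giving a nontrivial homomorphism $\phi\colon E_\rho\to L$ for a suitable line bundle $L$, i.e.\ a point $[\phi]\in\PP(E_{\rho|\eta})$ at the generic point; the homogeneity and Proposition~\ref{prop:W_abc} then express $V_1$ as a direct summand of $W^{\oplus\tup{\ka}}_{a_1,b_1,c_1}$ so that the whole tump machinery applies. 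Together with the decoration $s\in E_{\sigma|\{x_0\}}^\vee$ coming from the $\sigma$-decoration, the pair $(\PB,s)$ thus produces a $\sigma$-decorated $\rho$-tump $(E,L,\phi,s)$.

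The heart of the argument is a \emph{dictionary} between the two stability conditions. First I would show that weighted reductions $(\la,\beta)$ of $\PB$ correspond bijectively to those weighted flags $(E_\bullet,\tup{\alpha})$ of $E$ that arise from one-parameter subgroups of $G$ (as opposed to arbitrary one-parameter subgroups of $\SL_T(W)$). Under this correspondence one must match the three terms of the principal-bundle stability inequality with the three terms of the tump inequality: the degree term $\deg(\beta^*\PB\times^{\chi_\la}\CC)$ together with the character term $\langle\chi,\ka_*\la\rangle$ should reproduce $M_{\tup{\ka},\tup{\chi}}(E_\bullet,\tup{\alpha})$, while $\delta\mu(\la,\beta,s)$ matches $\delta_2\mu_2(E_\bullet,\tup{\alpha},s)$. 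The crucial point is that the reduction-encoding term $\mu_1(E_\bullet,\tup{\alpha},\phi)$ is \emph{exactly} the obstruction to $(E_\bullet,\tup{\alpha})$ coming from a $G$-reduction: for flags induced by genuine weighted reductions of $\PB$ one has $\mu_1(E_\bullet,\tup{\alpha},\phi)=0$, whereas for all other flags $\mu_1(E_\bullet,\tup{\alpha},\phi)>0$. This is precisely the content of \emph{asymptotic} $(\delta_2,\tup{\ka},\tup{\chi})$-stability as characterized in Remark~\ref{rem:asympt_stability}: conditions \ref{rem:asympt_stab_1} and \ref{rem:asympt_stab_2} say nothing other than ``$\phi$ defines a reduction to $G$, and along $G$-reductions the decorated bundle is semistable.'' Hence I would prove that $(\PB,s)$ is $(\delta,\chi)$-\sstable{} if and only if its associated tump $(E,L,\phi,s)$ is asymptotically $(\delta_2,\tup{\ka},\tup{\chi})$-\sstable{}, with $\delta_2$ determined by $\delta$.

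With this equivalence in hand, I would invoke Theorem~\ref{thm:asymptotic_stability}: there is a $\Delta$ such that for $\delta_1>\Delta$, asymptotic stability coincides with genuine $(\delta_1,\delta_2,\tup{\ka},\tup{\chi})$-stability of tumps. Choosing such a $\delta_1$, Theorem~\ref{thm:moduli_space_of_tumps} gives a projective moduli space of $(\delta_1,\delta_2,\tup{\ka},\tup{\chi})$-\sstable{} decorated tumps, provided $a_2\delta_2<1$; since $\sigma$ is a direct summand of $W_{a,b,c}$ with the $a$ appearing in the hypothesis, the assumption $a\delta<1$ of the theorem translates into exactly the bound $a_2\delta_2<1$ needed. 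The desired moduli space is then cut out inside this tump moduli space as the locus of those tumps whose $\phi$ genuinely encodes a $G$-reduction; I would verify that this locus is closed (it is a condition on the generic behaviour of $\phi$, and properness of the fibration $\Iso_T(W,E)/G$ over the appropriate base makes it a closed, in fact projective, subscheme), so that it inherits projectivity. Finally I would check that the moduli functors match: a family of decorated principal bundles gives rise to a family of decorated tumps lying in this locus and conversely, and the notions of isomorphism and of S-equivalence correspond under the dictionary (matching $\df_{(\la,\beta)}(\PB,s)$ with $\df_{(E_\bullet,\tup{\alpha})}(E,L,\phi,s)$ along critical data).

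The main obstacle I anticipate is the precise construction of the homogeneous representation $\rho$ and the proof that ``$\phi$ defines a $G$-reduction'' is a \emph{closed} condition detectable by the vanishing $\mu_1(E_\bullet,\tup{\alpha},\phi)=0$ along all one-parameter subgroups of $G$. This is where the geometric invariant theory of the fibre $\GL_T(W)/G$ enters: one needs that the instability of $[\phi_\eta]$ in $\PP(\EE_\rho)$ under $\SL_T^{\tup{\ka}}(\EE)$ is governed by whether the point lies in the $G$-orbit corresponding to a reduction, and that only one-parameter subgroups \emph{not} conjugate into $G$ can destabilize it. Establishing this orbit-theoretic correspondence — essentially that $G$ is the stabilizer of a suitable point in $\PP(V_1)$ and that reductions of $\PB$ to $G$ are exactly sections $\phi$ avoiding the unstable locus — is the delicate step, and it is precisely the role that the choice of $\rho$ and the embedding into $W_{a,b,c}$ are designed to fulfil, following the strategy of Schmitt~\cite{Schmitt08}.
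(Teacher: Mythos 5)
Your reduction of the problem to decorated tumps --- encoding the $G$-reduction via a homogeneous representation $\rho$, matching the terms of the two stability inequalities, identifying the vanishing of $\mu_1(E_\bullet,\tup{\alpha},\phi)$ as the criterion for a weighted flag to come from a weighted reduction of $\PB$, and recognizing asymptotic $(\delta,\tup{\ka},\tup{\chi})$-stability as the correct translation of $(\delta,\chi)$-stability so that Theorems \ref{thm:asymptotic_stability} and \ref{thm:moduli_space_of_tumps} apply under $a\delta<1$ --- is exactly the paper's strategy (Lemma \ref{lem:PB_stable<=>VB_stable}, Lemma \ref{lem:mu(la,x)=0<=>la_in_G}, Proposition \ref{prop:la_in_G<=>mu_1=0} and the semistable reduction proposition). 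Up to that point your outline is sound.

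The gap is in your final step. First, the locus of tumps ``whose $\phi$ genuinely encodes a $G$-reduction'' is not closed, and the properness you invoke is not available: $\Iso_T(W,E)/G$ is an \emph{open} subscheme of the affine GIT quotient $\Hom_T(W,E)/\!\!/G$, so being a genuine reduction is an open condition on sections, not a closed one. The paper resolves this by enlarging the category to \emph{pseudo} principal bundles, i.e.\ sections of $\Hom_T(W,E)/\!\!/G$, equivalently algebra homomorphisms $\tau:\Sym(\bigoplus_{t\in T}{E^t}^\vee\otimes W^t)^G\to\mc{O}_X$, and then showing via Lemma \ref{lem:iso_<=>_stable}, Lemma \ref{lem:pseudo_PB_is_PB} and part (i) of Remark \ref{rem:asympt_stability} that every \emph{semistable} pseudo principal bundle is automatically a genuine one, so that no closedness of the reduction locus is ever needed. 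Second, one cannot obtain the moduli space merely by passing to a subscheme of the moduli space of tumps: one needs a parameter space carrying a locally universal \emph{family} of decorated pseudo principal bundles, which is a strictly finer datum than a family of tumps. The paper constructs such a space $\QPPB_n$ (Proposition \ref{prop:QPPB_exists}) together with an equivariant morphism $f:\QPPB_n\to\QTmp_n$; after dividing out the $\CC^*$ that acts trivially on $\QTmp_n$ but not on $\QPPB_n$, this morphism is proper and, by the injectivity statement of Lemma \ref{lem:PPB_to_tump_injective}, injective, hence finite and affine by \cite{EGAIV-3}, 8.11.1, and Ramanathan's Lemma then transports the good quotient of $\QTmp_n^{\textnormal{ss}}$ back to $\QPPB_n^{\textnormal{ss}}$. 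Your outline is missing both the enlargement to pseudo principal bundles and this descent-of-quotients argument, and the closedness claim as you state it would fail.
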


The remainder of this section is devoted to the construction of this moduli space.
%%%%%%%%%%%%%%%%%%%%%%%%%%%%%%%%%%%%%%%%%%%%%%%%
\subsection{Decorated Pseudo Principal Bundles}
Let $(\PB,s)$ be a decorated principal bundle. A weighted reduction $(\la,\beta)$ of $\PB$ determines a weighted flag $(E_\bullet,\tup{\alpha})$ of the associated $T$-split vector bundle $E$, such that
\[
 \deg(\beta^*\PB\times^{\chi_\la} \CC)=M_{\tup{\ka},\tup{0}}(E_\bullet,\tup{\alpha})
\]
with $\ka_t:=1$, $0_t:=0$ for $t\in T$.

A character $\chi$ of $\SL_T(W)$ is given by an element $\tup{\chi}\in \ZZ^T/\Delta$ where $\Delta$  is the diagonal. If $\la$ is a one-parameter subgroup of $\SL_T(W)$ with associated weighted flag $(W_\bullet,\tup{\alpha})$, one finds
\[
 \langle \chi,\la\rangle=\sum_{t\in T}\chi_t\sum_{j=1}^{\len(W_\bullet)} \alpha_j \left(\dim(W_j)
\dim(W^t)-\dim(W)\dim(W^t_j)\right)\,.
\]
A short calculation shows $\deg(\beta^*\PB\times^{\chi_\la} \CC)+\langle \chi,\la\rangle=M_{\tup{\ka},\tup{\chi}}(E_\bullet,\tup{\alpha})$. Finally we have $\mu(\la,\beta,s)=\mu_2(E_\bullet,\tup{\alpha},s)$. This implies the following lemma.
\begin{lemma} \label{lem:PB_stable<=>VB_stable}
A decorated principal bundle $(\PB,s)$ is $(\delta,\chi)$-\sstable{} if and only if the condition
 \[
 M_{\tup{\ka},\tup{\chi}}(E_\bullet,\tup{\alpha})+\delta\mu_2(E_\bullet,\tup{\alpha},s) (\ge) 0\,.
\]
holds for every weighted flag $(E_\bullet,\tup{\alpha})$ of the associated $T$-split vector bundle $E$ that is induced by a weighted reduction of $\PB$.
\end{lemma}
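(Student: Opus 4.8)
The plan is to prove the biconditional by a direct, term-by-term comparison of the defining stability inequality with the claimed one, using the three identities established in the paragraph immediately preceding the lemma. Those identities already carry all of the geometric content; what remains is purely a matter of matching the two quantifications, and the argument runs simultaneously for the stable and the semistable case through the $(\ge)$ convention.

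First I would recall that, by definition, $(\PB,s)$ is $(\delta,\chi)$-\sstable{} precisely when
\[
 \deg(\beta^*\PB\times^{\chi_\la} \CC) + \delta \mu(\la,\beta,s)+ \langle \chi,\ka_*\la\rangle (\ge) 0
\]
holds for every weighted reduction $(\la,\beta)$ of $\PB$. Fixing such a reduction, I pass to the weighted flag $(E_\bullet,\tup{\alpha})$ of the associated $T$-split vector bundle $E$ that it induces. Substituting the identity $\deg(\beta^*\PB\times^{\chi_\la} \CC)+\langle \chi,\ka_*\la\rangle=M_{\tup{\ka},\tup{\chi}}(E_\bullet,\tup{\alpha})$ together with $\mu(\la,\beta,s)=\mu_2(E_\bullet,\tup{\alpha},s)$, the left-hand side above becomes exactly $M_{\tup{\ka},\tup{\chi}}(E_\bullet,\tup{\alpha})+\delta\mu_2(E_\bullet,\tup{\alpha},s)$. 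Hence the stability inequality for the reduction $(\la,\beta)$ is, verbatim, the inequality asserted in the lemma for the induced flag $(E_\bullet,\tup{\alpha})$.

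To conclude the biconditional I would observe that the two universal quantifiers run over matching ranges: as $(\la,\beta)$ varies over all weighted reductions of $\PB$, the induced $(E_\bullet,\tup{\alpha})$ varies over exactly the set of weighted flags of $E$ that arise from a weighted reduction, which is precisely the set over which the lemma quantifies. The assignment $(\la,\beta)\mapsto(E_\bullet,\tup{\alpha})$ need not be injective, but since the comparison is value-preserving term by term this is irrelevant; it suffices that the two families of inequalities coincide as sets of conditions, whence one stability condition holds if and only if the other does.

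I do not expect a genuine obstacle at the level of the lemma itself, since all of the nontrivial work---identifying the parabolic degree together with the character pairing with $M_{\tup{\ka},\tup{\chi}}$, and the instability measure $\mu(\la,\beta,s)$ with $\mu_2$---is already carried out in the preceding paragraph and may be taken as given. The only point demanding care is the bookkeeping around the quantifiers: one must remember that not every weighted flag of $E$ need come from a reduction of $\PB$ to a parabolic subgroup $Q_G(\la)$, which is exactly why the lemma restricts attention to those flags induced by a weighted reduction.
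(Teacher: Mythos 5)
Your proposal is correct and matches the paper's argument: the paper establishes exactly the identities $\deg(\beta^*\PB\times^{\chi_\la}\CC)+\langle\chi,\ka_*\la\rangle=M_{\tup{\ka},\tup{\chi}}(E_\bullet,\tup{\alpha})$ and $\mu(\la,\beta,s)=\mu_2(E_\bullet,\tup{\alpha},s)$ in the paragraph preceding the lemma and then deduces the statement by precisely this term-by-term translation together with the observation that the flags in question are by definition those in the image of the reduction-to-flag assignment. Your remarks on the quantifier ranges and the irrelevance of injectivity are the right points of care, and nothing further is needed.
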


\begin{definition}
 A \emph{decorated pseudo principal bundle} is a triple $(E,\tau,s)$ consisting of a $T$-split vector bundle $E$ with $\tup{r}(E)=\tup{r}$ and trivial determinant, a non-trivial homomorphism
 \[
  \tau:\Sym\left(\,\bigoplus_{t\in T}{E^t}^\vee\otimes W^t \right)^G\to \mc{O}_X
 \]
and a point $s$ in $\PP(E_{\sigma|\{x_0\}})$.

An \emph{isomorphism} of decorated pseudo principal bundles is an isomorphism $f:E\to E'$ of $T$-split vector bundles such that $\tau\circ \tilde{f}=\tau'$ and $f_\sigma(s')=s$, where
\[
 \tilde{f}:=\Sym\left(\bigoplus_{t\in T}{f^t}^\vee\otimes \id_{W^t}\right)
\]
\end{definition}
\begin{definition}
A \emph{family of decorated pseudo principal bundles} parametrized by a scheme $S$ is tuple $\mc{F}=(E_S,N_S,\tau_S,s_S)$, where $E_S$ is a $T$-split vector bundle on $S\times X$ of rank vector $\tup{r}$, such that for each $s\in S$ the $T$-split bundle $E_{S|\{s\}\times X}$ has degree vector $\tup{d}$, $N_S$ is a line bundle on $S$,
\[
 \tau_S:\Sym\left(\bigoplus_{t\in T}{E^t_S}^\vee\otimes W^t \right)^G \to \pr_X^*\mc{O}_X
\]
is a homomorphism whose restriction to any point $s\in S$ is non-trivial, and $s_S:E_{S\sigma|S\times\{x_0\}}\to N_S$ is a surjective homomorphism.

Two families $\mc{F}$ and $\mc{F}'$ are \emph{isomorphic} if there are isomorphisms $f:E_S\to E'_S$ and $h:N_S\to N'_S$, such that $\tau'_S\circ \tilde{f}= \tau_S$ and $h\circ s_S=s'_S\circ f_{\sigma|S\times\{x_0\}}$. 
\end{definition}

A principal bundle $\PB_S$ on $S\times X$ corresponds to a $T$-split vector bundle $E_S$ and a reduction of the structure group $\beta_S:S\times X\to \Iso_T(W,E_S)/G$. We compose the morphism $\beta_S$ with the inclusion
\[
 \Iso_T(W,E_S)/G\to \Hom_T(W,E_S)/\!\!/G
\]
The resulting morphism is given by a homomorphism of $\mc{O}_X$-algebras
\[
 \tau_S(\beta_S):\Sym\left(\,\bigoplus_{t\in T}{E^t}^\vee\otimes W^t\right)^G \to \mc{O}_X\,.
\]
\begin{definition} 
 Let $(\PB_S,N_S,\phi_S,s_S)$ be a family of decorate principal bundles and $E$ the associated $T$-split vector bundle of $\PB$. Then we call $(E_S,N_S,\tau_S(\beta_S),s_S)$ the \emph{associated family of decorated pseudo principal bundles}.
\end{definition}
\begin{lemma} \label{lem:pseudo_PB_is_PB}
 A decorated pseudo principal bundle $(E,\tau,s)$ definig the section $\sigma:X\to \Hom_T(W,E)/\!\!/G$ is the image of a decorated principal bundle if and only if there is a point $x\in X$ such that $\sigma(x)$ lies in $\Iso_T(W,E)/G$.
\end{lemma}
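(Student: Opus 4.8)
The plan is to express $\Iso_T(W,E)/G$ as the non-vanishing locus of a single $G$-invariant function on the affine quotient $\Hom_T(W,E)/\!\!/G$, and then to use that $X$ is projective and $E$ has trivial determinant to force this function to be constant along the section $\sigma$. The \emph{only if} direction is immediate: if $(E,\tau,s)$ is the image of a decorated principal bundle, then $\tau$ arises from a reduction $\beta\colon X\to\Iso_T(W,E)/G$, so $\sigma$ is the composite of $\beta$ with the inclusion $\Iso_T(W,E)/G\hookrightarrow\Hom_T(W,E)/\!\!/G$; in particular $\sigma(x)\in\Iso_T(W,E)/G$ for every $x\in X$, hence for some $x$.

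For the converse, I would introduce the product of determinants. For $\phi=(\phi^t)_{t\in T}\in\Hom_T(W,E)$ put $\delta(\phi):=\prod_{t\in T}\det(\phi^t)$. Each $\det(\phi^t)$ is a relative invariant transforming under $g\in G$ by the character $\det(\ka(g)_t)$, so $\delta$ transforms by $\prod_{t\in T}\det(\ka(g)_t)=\det(\ka(g))=1$, since $G\subset\SL(W)$; hence $\delta$ is $G$-invariant. As a section it takes values in $\Hom(\det W,\det E)=(\det W)^\vee\otimes\det E$, and because $\det E=\bigotimes_{t\in T}\det E^t=\mc{O}_X$ is trivial, $\delta$ descends to a regular function $\bar\delta$ on the relative affine $X$-scheme $\Hom_T(W,E)/\!\!/G$, well defined up to a nonzero scalar. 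Since $\det(\phi^t)\neq 0$ precisely when $\phi^t$ is an isomorphism, one has $\Iso_T(W,E)=\{\delta\neq 0\}$, and correspondingly $\Iso_T(W,E)/G=\{\bar\delta\neq 0\}$ as an open subscheme of $\Hom_T(W,E)/\!\!/G$.

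Now I would pull back along the section: $\sigma^*\bar\delta$ is a global regular function on $X$, hence a constant, as $X$ is projective and connected. Therefore $\sigma(x)\in\Iso_T(W,E)/G$ holds either for all $x$ or for no $x$. Under the hypothesis that it holds for some $x$ the constant is nonzero, so $\sigma$ factors through $\Iso_T(W,E)/G$, giving a reduction $\beta$ of the structure group of the $\GL_T(W)$-bundle $E$ to $G$ and hence a principal $G$-bundle $\PB$ whose associated $T$-split bundle is $E$ and for which $\tau(\beta)=\tau$. As $\sigma$ is induced by a representation of $\GL_T(W)$ one has $\PB_\sigma=E_\sigma$, so $s$ defines a point of $\PP(\PB_\sigma)_{|x_0}$ and $(\PB,s)$ is a decorated principal bundle with image $(E,\tau,s)$.

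The main obstacle I expect is the identification $\Iso_T(W,E)/G=\{\bar\delta\neq 0\}$ inside the GIT quotient. The equality $\{\delta\neq 0\}=\Iso_T(W,E)$ on the level of total spaces is elementary, but one must verify that on this locus the $G$-action is free with closed orbits, so that its geometric quotient is saturated and maps isomorphically onto the open set $\{\bar\delta\neq 0\}$; this is the standard fact underlying the description of principal bundles as sections of $\Iso_T(W,E)/G$. Once it is in place, the triviality of $\det E$ together with the projectivity of $X$ do all the remaining work by making $\sigma^*\bar\delta$ constant.
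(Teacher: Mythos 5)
Your argument is correct and is essentially the proof of the result the paper relies on here: the paper itself gives no proof beyond the reference to Schmitt, Lemma 2.6.3.1, and that proof is exactly your determinant argument --- $\prod_{t\in T}\det(\phi^t)$ is $G$-invariant because $G\subset\SL(W)$, it descends to a regular function on $\Hom_T(W,E)/\!\!/G$ since $\det E\cong\mc{O}_X$, and its pullback along $\sigma$ is a global regular function on the projective curve $X$, hence constant. The saturation/freeness point you flag is the standard identification of $\Iso_T(W,E)/G$ with the open subscheme $\{\bar\delta\neq 0\}$ of the quotient and is not a gap.
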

\begin{proof}
 See \cite{Schmitt08}, Lemma 2.6.3.1.
\end{proof}

%%%%%%%%%%%%%%%%%%%%%%%%%%%%%%%%%%%%%%%%%%%%%%%%
\subsection{Decorated Pseudo Principal Bundles as Decorated Tumps}
Consider the space 
\[
 H:=\bigoplus_{t\in T}\End(W^t)
\]
 with the natural left action of $\GL_{T}(W)$ and the natural right action of $G$. Since these actions commute, there is an induced action of $\GL_T(W)$ on $\PP(H^\vee)/\!\!/G=\Proj(\Sym(H^\vee)^G)$. Lemma 2.6.2.1 in \cite{Schmitt08} tells us:
\begin{lemma} \label{lem:iso_<=>_stable}
 Let $h\in H$ be a point and $[h]\in \PP(H^\vee)/\!\!/ G$ the induced point.
 \begin{enumerate}
  \item If $h$ is an isomorphism, then $[h]$ is $\SL_T(W)$-polystable.
  \item If $[h]$ is $\SL_T(W)$-semistable, then $h$ is an isomorphism.
 \end{enumerate}
\end{lemma}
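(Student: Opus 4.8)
The plan is to reduce both statements to a single invariant-theoretic computation for the commuting actions, and then to an analysis of orbit closures. Since the left $\SL_T(W)$-action and the right $G$-action on $H$ commute and both groups are reductive, iterated invariant theory gives
\[
 \bigl(\PP(H^\vee)/\!\!/G\bigr)/\!\!/\SL_T(W) = \PP(H^\vee)/\!\!/\bigl(\SL_T(W)\times G\bigr)\,,
\]
and a point of $\PP(H^\vee)/\!\!/G$ is $\SL_T(W)$-semistable (resp.\ polystable) if and only if the underlying $G$-semistable point of $\PP(H^\vee)$ is $\bigl(\SL_T(W)\times G\bigr)$-semistable (resp.\ polystable); here the relevant invariant sections on the quotient correspond exactly to the $\bigl(\SL_T(W)\times G\bigr)$-invariant homogeneous polynomials on $H$. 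I would therefore first determine the graded ring $\Sym(H^\vee)^{\SL_T(W)\times G}$.

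The key computation is that this ring equals $\CC[\Pi]$, where $\Pi(h):=\prod_{t\in T}\det(h^t)$. Under left multiplication each summand $\End(W^t)$ is, as a $\GL(W^t)$-module, isomorphic to $\dim(W^t)$ copies of the standard representation $W^t$, so by the first fundamental theorem of invariant theory for the special linear group the $\prod_{t\in T}\SL(W^t)$-invariants on $H$ are the polynomials $\CC[\det(h^t)\mid t\in T]$ in the individual determinants. Writing $\SL_T(W)$ as the product of $\prod_{t\in T}\SL(W^t)$ with the central torus $\{(z_t\cdot\id)\mid \prod_{t\in T}z_t^{\dim W^t}=1\}$, invariance under this torus forces all exponents in a monomial $\prod_{t\in T}\det(h^t)^{k_t}$ to agree, leaving exactly $\CC[\Pi]$. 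Finally $\Pi$ is $G$-invariant, since $G\subset\SL_T(W)$ forces $\prod_{t\in T}\det(g^t)=1$ for every $g\in G$, so indeed $\Sym(H^\vee)^{\SL_T(W)\times G}=\CC[\Pi]$. As $\Pi(h)\neq 0$ precisely when every $h^t$ is invertible, a point is $\bigl(\SL_T(W)\times G\bigr)$-semistable if and only if $h$ is an isomorphism; this yields (ii) and, in particular, shows that an isomorphism is semistable.

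It remains to upgrade semistability to polystability in (i). For an isomorphism $h$ with $\Pi(h)=c\neq 0$, left multiplication by $\SL_T(W)$ is transitive on the fibre $\{h'\mid \Pi(h')=c\}$: given a second isomorphism $h'$ with $\Pi(h')=c$, the element $g:=\bigl(h'^t(h^t)^{-1},t\in T\bigr)$ lies in $\SL_T(W)$ and satisfies $g\cdot h=h'$. Hence the $\bigl(\SL_T(W)\times G\bigr)$-orbit of $h$ in $H$ is the entire level set $\{\Pi=c\}$, which is closed in $H$ and avoids the origin; so $[h]$ is $\bigl(\SL_T(W)\times G\bigr)$-polystable, and by descent of polystability through the good quotient by $G$ the point $[h]\in\PP(H^\vee)/\!\!/G$ is $\SL_T(W)$-polystable. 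I expect the main obstacle to be the combination of the invariant computation with the GIT bookkeeping: one must make the correspondence between invariant sections on $\PP(H^\vee)/\!\!/G$ and $\bigl(\SL_T(W)\times G\bigr)$-invariant polynomials on $H$ precise (passing to a power of $\mathcal{O}(1)$ that descends with the correct linearization) and confirm that the quotient map carries the closed orbit of an isomorphism to a closed $\SL_T(W)$-orbit, so that polystability is genuinely preserved.
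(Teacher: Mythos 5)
Your argument is correct. The paper gives no proof of its own here --- it quotes Lemma 2.6.2.1 of Schmitt's book --- and your proof is essentially the standard argument from that reference, adapted to the $T$-split setting: the ring of simultaneous invariants is $\CC[\Pi]$ with $\Pi(h)=\prod_{t\in T}\det(h^t)$, which gives (ii), and for $c\neq 0$ the fibre $\{\Pi=c\}$ is a single closed $\SL_T(W)$-orbit, which gives (i) after passing through the good quotient by $G$.
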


Let $d\in \NN$ be a number such that $\Sym(H^\vee)^G$ is generated in degree less or equal to $d$ and the graded ring 
\[
 \Sym^{(d!)}(H^\vee)^G:= \bigoplus_i \Sym^{d!i}(H^\vee)^G
\]
is generated in degree one. Then the vector space homomorphism
\[
 V_1:=\bigoplus_{\substack{\tup{s}\in\NN^d\\ \sum i s_i=d!}} \bigotimes_{i=1}^d
\Sym^{s_i}(\Sym^{i}(H^\vee)^G)\to \Sym^{d!}(H^\vee)^G
\]
is surjective. Note that the representation $\rho:\GL_T(W)\to \GL(V_1)$ is homogeneous of degree $d!$.

By choice of $d$ the vertical homomorphisms in the commutative diagram
 \[
  \xymatrix{
  \Sym(V_1) \ar[r] \ar@{->>}[d] & \Sym\left(\bigoplus_{i=1}^d \Sym^{i}(H^\vee)^G\right) \ar@{->>}[d] \\
  \Sym^{(d!)}(H^{\vee})^G \ar[r] &  \Sym(H^\vee)^G  
  }
 \]
are surjective. The horizontal induce isomorphisms of the corresponding (weighted) projective spaces.
\begin{lemma} \label{lem:mu>0<=>mu>0}
 Let $[h]$ be a point in $\PP(H^\vee)/\!\!/G$ and $v[h]$ its image in $\PP(V_1)$. For $\la\in X_*(\SL_T(W))$ we find
\begin{align*}
  \mu(\la,[h])(\ge) 0  && \Longleftrightarrow && \mu(\la,v[h]) (\ge) 0\,.
\end{align*}
\end{lemma}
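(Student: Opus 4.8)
The plan is to exhibit the assignment $[h]\mapsto v[h]$ as a single $\GL_T(W)$-equivariant closed immersion that pulls the natural polarization of $\PP(V_1)$ back to a fixed positive power of the natural polarization of $\PP(H^\vee)/\!\!/G$, and then to apply the functoriality of the Hilbert--Mumford weight, carefully tracking the positive scaling factor $d!$ introduced by the Veronese identification. First I would unwind $v$ directly from the commutative diagram: the left vertical arrow $\Sym(V_1)\twoheadrightarrow \Sym^{(d!)}(H^\vee)^G$ is a surjection of graded $\CC$-algebras (noted to be surjective just after the diagram), where $V_1$ sits in degree one. Taking $\Proj$ and using the Veronese identification $\PP(H^\vee)/\!\!/G=\Proj(\Sym(H^\vee)^G)\cong\Proj(\Sym^{(d!)}(H^\vee)^G)$ supplied by the horizontal arrows, this surjection yields a closed immersion $v\colon \PP(H^\vee)/\!\!/G\hookrightarrow \Proj(\Sym(V_1))=\PP(V_1)$ sending $[h]$ to $v[h]$. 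Since every map involved is $\GL_T(W)$-equivariant, the morphism $v$ is $\SL_T(W)$-equivariant.

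Next I would record the effect on the polarizations. Being induced by a surjection of graded rings generated in degree one, $v$ satisfies $v^*\mc{O}_{\PP(V_1)}(1)\cong \mc{O}_{\Proj(\Sym^{(d!)}(H^\vee)^G)}(1)$ as $\SL_T(W)$-linearized sheaves. The Veronese identification in turn gives, again equivariantly, $\mc{O}_{\Proj(\Sym^{(d!)}(H^\vee)^G)}(1)=\mc{O}_{\Proj(\Sym(H^\vee)^G)}(d!)$, the $d!$-th power of the natural polarization on $\PP(H^\vee)/\!\!/G$ used to define $\mu(\la,[h])$. Thus $v$ carries the natural polarization of $\PP(V_1)$ back to exactly $\mc{O}_{\PP(H^\vee)/\!\!/G}(d!)$.

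It remains to invoke functoriality of the Hilbert--Mumford weight. For an $\SL_T(W)$-equivariant closed immersion of polarized varieties whose polarizations correspond under pullback, the weight of a point equals the weight of its image, since both are read off from the action of $\la$ on the fibre of the linearized sheaf over the common limit $\lim_{z\to\infty}\la(z)\cdot[h]$. Applying this to $v$ together with the identification of polarizations, the weight of $v[h]$ in $\PP(V_1)$ with respect to $\mc{O}_{\PP(V_1)}(1)$ equals the weight of $[h]$ with respect to $\mc{O}_{\PP(H^\vee)/\!\!/G}(d!)$, which by homogeneity of $\mu$ in the polarization is $d!\cdot\mu(\la,[h])$. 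Hence $\mu(\la,v[h])=d!\,\mu(\la,[h])$, and since $d!>0$ this positive scaling preserves both the weak and the strict sign, yielding $\mu(\la,[h])(\ge)0\Leftrightarrow\mu(\la,v[h])(\ge)0$.

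The main obstacle is bookkeeping rather than conceptual: one must be careful that $\mc{O}_{\Proj(\Sym(H^\vee)^G)}(1)$ need not be an honest line bundle (only a suitable power is), so $\mu(\la,[h])$ is, strictly speaking, defined through a Veronese power and only determined up to a positive multiple. This is exactly why the statement is phrased as a sign equivalence, and the computation above shows the two weights differ precisely by the positive factor $d!$, so the equivalence is unambiguous.
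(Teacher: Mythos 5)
Your argument is correct and arrives at the same key identity as the paper, namely $\mu(\la,v[h])=d!\,\mu(\la,[h])$, but by a more abstract route. The paper's proof is explicit: it represents $[h]$ by the linear forms $h_i\colon\Sym^i(H^\vee)^G\to\CC$ and writes out $v[h]$ as the tuple of product forms $h_{\tup{s}}$ on the summands of $V_1$; since such a product form is nonzero on a weight vector exactly when every $h_i$ is nonzero on the corresponding factor, and weights add over tensor and symmetric products, the minimal weight occurring in $v[h]$ is $d!\cdot\min_i(m_i/i)$, where $m_i$ is the minimal weight on which $h_i$ is nonzero --- i.e.\ $d!$ times the normalized minimal weight defining $\mu(\la,[h])$. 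You instead identify the map $[h]\mapsto v[h]$ with the $\GL_T(W)$-equivariant closed immersion $\Proj\bigl(\Sym^{(d!)}(H^\vee)^G\bigr)\hookrightarrow\PP(V_1)$ induced by the surjection of graded rings, check that it pulls $\mc{O}_{\PP(V_1)}(1)$ back to the $d!$-th power of the natural polarization, and invoke functoriality of the Hilbert--Mumford weight under equivariant morphisms with compatible linearizations. This buys a cleaner argument that avoids the combinatorial minimization over the multi-indices $\tup{s}$, at the cost of having to be careful --- as you are --- that the natural polarization on $\Proj(\Sym(H^\vee)^G)$ is only a $\QQ$-line bundle, so $\mu(\la,[h])$ is pinned down only up to a positive normalization; this is harmless since the lemma asserts only a sign equivalence. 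Both proofs are complete; yours is essentially the linearized-line-bundle formulation of the computation the paper leaves implicit after displaying the formula for $h_{\tup{s}}$.
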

\begin{proof}
 Let $[h]$ be represented by $h_i:\Sym^i(H^\vee)^G\to \CC$, $i=1,\ldots,d$. Then $v[h]$ is determined by the linear forms
\begin{align*}
  h_{\tup{s}}\colon \bigotimes_{i=1}^d \Sym^{s_i}(\Sym^{i}(H^\vee)^G) &\to \CC \\
   (u_1^{(1)}\cdot \cdots \cdot u_{s_1}^{(1)})\otimes \cdots \otimes(u_1^{(d)}\cdot \cdots \cdot u_{s_d}^{(d)}) &\mapsto \prod_{i=1}^d\prod_{j=1}^{s_i} h_i(u^{(i)}_j)\,.
 \end{align*}
This implies the claim.
\end{proof}

Let $(E_S,N_S,\tau_S,s_S)$ be a parametrized family of decorated pseudo principal bundles. We consider the bundle
\[
 E_{S,\rho}=\bigoplus_{\substack{\tup{s}\in\NN^d\\ \sum i s_i=d!}} \bigotimes_{i=1}^d
\Sym^{s_i}\left(\Sym^{i}\left(\bigoplus_{t\in T} E_S^{t\vee}\otimes
W^t\right)^G\right)\,.
\]
The composition of $\tau_S^{d!}$ with the surjection
 \[
E_{S,\rho}\to \Sym^{d!}\left(\bigoplus_{t\in T} E_S^{t\vee}\otimes
W^t\right)^G
\]
defines a non-trivial homomorphism $\phi_S:E_{S,\rho}\to \pr_X^*\mc{O}_X$. 

\begin{definition}
 We call $(E_S,\mc{O}_X,N_S,\phi_S,s_S)$ the \emph{family of decorated tumps associated with the family $(E_S,N_S,\tau_S,s_S)$} of decorated pseudo principal bundles.
\end{definition}

\begin{lemma} \label{lem:PPB_to_tump_injective}
 The map taking the isomorphism class of a decorated pseudo principal bundle $[(E,\tau,s)]$ to the isomorphism class of its associated decorated tump $[(E,\mc{O}_X,\phi,s)]$ is injective.
\end{lemma}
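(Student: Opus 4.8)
The plan is to unwind an isomorphism of the associated decorated tumps and to reconstruct from it an isomorphism of decorated pseudo principal bundles. Suppose the tumps $(E,\mc{O}_X,\phi,s)$ and $(E',\mc{O}_X,\phi',s')$ are isomorphic, witnessed by $f\colon E\to E'$, an automorphism $h$ of $\mc{O}_X$ (i.e.\ a scalar $\lambda\in\CC^*$, since $X$ is connected and projective) and a scalar $c\in\CC^*$ with $\phi'\circ f_\rho=\lambda\phi$ and $s'\circ f_{\sigma}=c\,s$. The decoration $s$ lives in the same projective space over $x_0$ in both the tump and the pseudo principal bundle descriptions, and the condition $s'\circ f_{\sigma}=c\,s$ is exactly the projective condition $f_\sigma(s')=s$; so $s$ transports correctly and the entire content of the lemma is to promote $\phi'\circ f_\rho=\lambda\phi$ to an honest identity $\tau\circ\tilde f=\tau'$ of $\mc{O}_X$-algebra homomorphisms, after possibly correcting $f$.

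First I would make precise how $\phi$ remembers $\tau$. By construction $\phi=\tau^{d!}\circ(\mathrm{surj})$, and since $\tau$ is a homomorphism of $\mc{O}_X$-algebras the surjection $E_\rho\to\Sym^{d!}(\bigoplus_t {E^t}^\vee\otimes W^t)^G$ is the multiplication map; composing with $\tau^{d!}$ therefore sends a decomposable section $\bigotimes_i(u^{(i)}_1\cdots u^{(i)}_{s_i})$ to $\prod_{i,j}\tau_i(u^{(i)}_j)$, exactly the formula appearing in the proof of Lemma \ref{lem:mu>0<=>mu>0}. Consequently $\phi$ encodes the full tuple $(\tau_1,\dots,\tau_d)$ of graded components of $\tau$, and its projectivization $[\phi]$ is precisely the image of the section $[\tau]\colon X\to\PP(H^\vee)/\!\!/G=\Proj\big(\Sym^{(d!)}(H^\vee)^G\big)$ under the closed embedding $\Proj\big(\Sym^{(d!)}(H^\vee)^G\big)\hookrightarrow\PP(V_1)$ coming from the surjection $\Sym(V_1)\twoheadrightarrow\Sym^{(d!)}(H^\vee)^G$ in the commutative diagram above. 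A closed embedding is injective, so $[\phi]$ determines $[\tau]$, and by naturality of the construction in $f$ the relation $\phi'\circ f_\rho=\lambda\phi$ becomes the projective identity $[\tau'\circ\tilde f^{-1}]=[\tau]$ of sections of $\PP(H^\vee)/\!\!/G$.

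It then remains to pass from the projective identity back to an identity of algebra homomorphisms. Two $\mc{O}_X$-algebra homomorphisms with the same projectivized section differ by a radial rescaling $\tau_i\mapsto\nu^i\tau_i$ for a single $\nu\in\CC^*$: the ambiguity is the choice of a nowhere-vanishing lift along $\Spec(\mc{A})\setminus 0\to\Proj(\mc{A})$, and on the connected projective curve $X$ such a lift is unique up to the global unit $\nu\in\mc{O}_X(X)^*=\CC^*$. This radial rescaling is induced by the automorphism $\nu\cdot\id_E$, since $\widetilde{(\nu\,\id_E)}$ acts by $\nu^i$ on the degree-$i$ part of $\mc{A}$. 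Hence, replacing $f$ by $g:=(\nu\,\id)\circ f$ for the appropriate $\nu$, I obtain $\tau\circ\tilde g=\tau'$ on the nose; because $\sigma$ is homogeneous, $\nu\,\id_E$ scales $s$ by a constant and so leaves the projective condition $g_\sigma(s')=s$ intact. Thus $g$ is an isomorphism of decorated pseudo principal bundles, proving injectivity.

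The hard part will be the second paragraph: rigorously identifying $[\phi]$ with the image of $[\tau]$ under the closed embedding, and checking that this identification is natural in the isomorphism $f$. This is exactly where the defining properties of $d$ enter—both that $\Sym(H^\vee)^G$ is generated in degrees $\le d$ (so that $\tau_1,\dots,\tau_d$ already determine the algebra homomorphism $\tau$) and that $\Sym^{(d!)}(H^\vee)^G$ is generated in degree one (so that the diagram yields a genuine closed embedding rather than merely a rational map of projective spaces). The accompanying bookkeeping subtlety is to track the two a priori independent scalings, the tump scalar $\lambda$ and the radial scalar $\nu$, and to verify they can be chosen compatibly so that the corrected map $g$ satisfies the unscaled identity $\tau\circ\tilde g=\tau'$ demanded by the definition of an isomorphism of decorated pseudo principal bundles.
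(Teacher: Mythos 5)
Your proposal is correct and follows essentially the same route as the paper's proof: both arguments reduce the statement to the injectivity of the weighted Veronese map $\Proj\bigl(\Sym^{(d!)}(H^\vee)^G\bigr)\hookrightarrow\PP(V_1)$, which shows that $\phi$ determines the graded components $\tau_1,\ldots,\tau_d$ up to a radial rescaling $\tau_i\mapsto\nu^i\tau_i$ by a single constant $\nu$, which is then absorbed into the homothety $\nu\cdot\id_E$. The paper carries out the same computation via roots of unity (from $\Sym^{l_j}(\tau_j)=\Sym^{l_j}(\tau'_j)$ one gets $\tau_j=\zeta_j\tau'_j$ with $\zeta_j^{l_j}=1$, made compatible by Veronese injectivity at the generic point), but the content is identical to your closed-embedding formulation.
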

\begin{proof}
 The proof is the same as that of Proposition 2.6.3.2 in \cite{Schmitt08}.
 Let $(E,\tau,s)$ and $(E',\tau',s')$ be two decorated pseudo principal bundles with isomorphic associated decorated tumps. It suffices to consider the case $E=E'$, $s=s'$ and $\phi(\tau)=\phi(\tau')$. Note that $\tau$ and $\tau'$ are determined by the induced homomorphisms 
 \[
  \tau_j,\tau'_j:\Sym^j\left(\bigoplus_{t\in T}{E^t}^\vee\otimes W^t\right)^G\to \mc{O}_X\,, \quad j=1,\ldots,d\,.
 \]
For each $j$ there exists $l_j\in \NN$ with $l_j j=d!$. By assumption we have $\Sym^{l_j}(\tau_i)=\Sym^{l_j}(\tau'_j)$. Hence, there is a $l_j$-th root of unity $\zeta_j\in \CC^*$ such that $\tau_j=\zeta_j\tau'_j$. It remains to show that there is a $d!$-th root of unity $\zeta$ with $\zeta_j=\zeta^j$.

Let $\EE=E_\eta$ be the restriction to the generic point. By assumption the points $[\tau_{1|\eta},\ldots,\tau_{d,|\eta}]$ and $[\tau'_{1|\eta},\ldots,\tau'_{d|\eta}]$ have the same image under the Veronese map
\[
 \PP\left(\bigoplus_{j=1}^d \Sym^j\left({E^t}^\vee\otimes W^t\right)^G \right)\to \PP(\EE_\rho)\,.
\]
Since this map is well known to be injective, the claim follows.
\end{proof}

\begin{lemma}
 A decorated tump $(E,\mc{O}_X,\phi,s)$ is the image of a decorated principal bundle if and only if the condition $\mu_1(E_\bullet,\tup{\alpha},\phi)\ge 0$ holds for all weighted flags $(E_\bullet,\tup{\alpha})$ of $E$. 
\end{lemma}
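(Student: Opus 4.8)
The plan is to reduce the statement to the three preceding lemmas by passing to the generic point of $X$ and translating the condition $\mu_1\ge 0$ into a semistability statement there. First I would record that a decorated tump $(E,\mc{O}_X,\phi,s)$ as in the statement corresponds, via the construction preceding Lemma \ref{lem:PPB_to_tump_injective}, to a decorated pseudo principal bundle $(E,\tau,s)$ and hence to a section $\sigma\colon X\to \Hom_T(W,E)/\!\!/G$; saying that the tump is the image of a decorated principal bundle means precisely that $(E,\tau,s)$ is. So by Lemma \ref{lem:pseudo_PB_is_PB} it suffices to decide whether $\sigma(x)\in \Iso_T(W,E)/G$ for some point $x\in X$.

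Next I would localize at the generic point $\eta$ with function field $K$. Trivializing $E_{|\eta}$ identifies the fiber of $\Hom_T(W,E)/\!\!/G$ over $\eta$ with $\PP(H^\vee)/\!\!/G$ and $\sigma(\eta)$ with a point $[h_\eta]$, whose image in $\PP(E_{\rho|\eta})$ is exactly $v[h_\eta]=[\phi_\eta]$, the point induced by $\phi$. The core computation is then the identity $\mu_1(E_\bullet,\tup{\alpha},\phi)=\mu(\la,[\phi_\eta])$, where $\la\in X_*(\SL_T(\EE))$ is the one-parameter subgroup attached to the restriction of the weighted flag $(E_\bullet,\tup{\alpha})$ to $\eta$; this is immediate from the definition $\mu_1=\mu(E_{\bullet,\rho},\tup{\alpha}_\rho,[\phi])$. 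Because every weighted flag of $\EE$ over $K$ extends to a weighted flag of $E$ by generic generation, and because a destabilizing one-parameter subgroup can be taken over $K$ (as in the proof of Proposition \ref{prop:stable=>asympt_stable}, following \cite{RamRam84}), the condition that $\mu_1(E_\bullet,\tup{\alpha},\phi)\ge 0$ for all weighted flags is equivalent to geometric semistability of $[\phi_\eta]$ under $\SL_T(\EE)$.

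With this dictionary in place, the two implications fall out of the cited lemmas. For the forward direction, if the tump is the image of a principal bundle then $\sigma$ factors through a reduction $\beta\colon X\to \Iso_T(W,E)/G$, so $h_\eta$ is an isomorphism; Lemma \ref{lem:iso_<=>_stable}, (i), makes $[h_\eta]$ polystable, hence semistable, Lemma \ref{lem:mu>0<=>mu>0} transports this to semistability of $[\phi_\eta]$, and the dictionary yields $\mu_1\ge 0$. For the converse, $\mu_1\ge 0$ gives semistability of $[\phi_\eta]$, Lemma \ref{lem:mu>0<=>mu>0} gives semistability of $[h_\eta]$, and Lemma \ref{lem:iso_<=>_stable}, (ii), shows $h_\eta$ is an isomorphism, i.e.\ $\sigma(\eta)\in\Iso_T(W,E)/G$. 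Since the isomorphism locus is open in $X$ and now contains the generic point, it contains a closed point $x$, and Lemma \ref{lem:pseudo_PB_is_PB} finishes the argument.

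The step I expect to be the main obstacle is the equivalence between $\mu_1\ge 0$ for all weighted flags of the \emph{bundle} $E$ and geometric semistability of the single point $[\phi_\eta]$ over the non-closed field $K$. The subtlety is twofold: one must know that destabilizing one-parameter subgroups of $[\phi_\eta]$ can be realized over $K$, so that they arise from honest weighted flags of $E$ rather than only after base change to $\bar K$; and one must match the normalization in the definition of $\mu_1$ with the Hilbert--Mumford index on $\PP(E_{\rho|\eta})$. Both points are handled by the instability one-parameter subgroup formalism already used in Proposition \ref{prop:stable=>asympt_stable}, and everything else is a formal consequence of Lemmas \ref{lem:iso_<=>_stable}, \ref{lem:mu>0<=>mu>0} and \ref{lem:pseudo_PB_is_PB}.
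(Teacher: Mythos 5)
Your proposal is correct and follows essentially the same route as the paper, which proves the lemma by combining Lemmata \ref{lem:iso_<=>_stable}, \ref{lem:mu>0<=>mu>0} and \ref{lem:pseudo_PB_is_PB} exactly as you do; you have merely made explicit the passage to the generic point, the identification of $\mu_1$ with the Hilbert--Mumford weight of $[\phi_\eta]$, and the rationality of the instability one-parameter subgroup over the perfect field $K$.
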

\begin{proof}
 This follows from Lemmata \ref{lem:iso_<=>_stable} and \ref{lem:mu>0<=>mu>0} together with Lemma \ref{lem:pseudo_PB_is_PB}.
\end{proof}

\begin{definition}
 We say that a family of decorated pseudo principal bundles is \emph{$(\delta,\tup{\chi})$-\sstable{}} if its associated family of decorated tumps is asymptotically $(\delta,\tup{\ka},\tup{\chi})$-\sstable{} with $\ka_t=1$, $t\in T$.
 
 Two decorated pseudo principal bundles are \emph{S-equivalent} if their associated isomorphism classes of decorated tumps are S-equivalent.
\end{definition}

%%%%%%%%%%%%%%%%%%%%%%%%%%%%%%%%%%%%%%%%%
\subsection{Comparison of Stability Notions}
Finally, we need to compare the notion of stability of decorated principal bundles with that of decorated pseudo principal bundles.

\begin{proposition} \label{prop:BMRT}
 Let $K$ be a perfect field, $G$ an affine reductive group and $H\subset G$ a closed reductive subgroup over $K$. For every point $[g]\in G/H$ and every one-parameter subgroup $\la:K^*\to G$, such that the limit $[g]_\infty:=\lim_{c\to\infty} \la(c)\cdot[g]$ exists in $G/H$, there is an element $g$ in the radical of $Q_G(\la)$ with $[g]_\infty=g'\cdot [g]$.
\end{proposition}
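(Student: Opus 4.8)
The plan is to linearise the problem and turn it into a statement about a $\mathbb{G}_m$-action on a vector space, and then to realise the limit point as the image of $[g]$ under a single element of the unipotent radical $R_u(Q_G(\la))$, which is contained in $\rad(Q_G(\la))$. First I would use that $H$ is reductive: by Matsushima's criterion $G/H$ is affine, and one may choose a $G$-module $V$ together with a $K$-point $v_*\in V$ whose orbit $O:=G\cdot v_*$ is closed and $G$-equivariantly isomorphic to $G/H$, with $v_*\mapsto[e]$. Write $v:=g\cdot v_*$ for the point corresponding to $[g]$, so that $\la(c)\cdot[g]$ corresponds to $\la(c)\cdot v$. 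Decomposing $v=\sum_\chi v_\chi$ into $\la$-weight spaces, the expression $\la(c)\cdot v=\sum_\chi c^\chi v_\chi$ has a limit in $V$ as $c\to\infty$ exactly when $v_\chi=0$ for all $\chi>0$, and then the limit is the weight-zero component $v_0$. Since the limit exists in $G/H$ by hypothesis, $v$ has only non-positive $\la$-weights and $v_0\in O$ represents $[g]_\infty$. The claim becomes: there is $g'\in\rad(Q_G(\la))$ with $g'\cdot v=v_0$, and it suffices to find one in $R_u(Q_G(\la))$.

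Next I would study the attracting set $\mathcal{F}:=O\cap(v_0+V_{<0})$, where $V_{<0}:=\bigoplus_{\chi<0}V_\chi$. The flow curve $c\mapsto\la(c)\cdot v=v_0+\sum_{\chi<0}c^\chi v_\chi$ lies in $\mathcal{F}$ and converges to $v_0$ at $\infty$, so $v\in\mathcal{F}$ and every point of $\mathcal{F}$ is contracted to $v_0$ as $c\to\infty$. On the other hand $R_u(Q_G(\la))\cdot v_0\subseteq\mathcal{F}$, since $R_u(Q_G(\la))$ lowers $\la$-weights, so $u\cdot v_0\in v_0+V_{<0}$; by the theorem of Kostant--Rosenlicht this orbit is closed in $O$. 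The key computation is the comparison of tangent spaces at $v_0$. Writing $\mathfrak{g}:=\opname{Lie} G$, one has $\opname{Lie} R_u(Q_G(\la))=\mathfrak{g}_{<0}$, and the stabiliser $G_{v_0}=g_0Hg_0^{-1}$ (reductive, containing $\la$) meets $R_u(Q_G(\la))$ in a group with Lie algebra $(\opname{Lie} G_{v_0})_{<0}$; hence
\[
 T_{v_0}\bigl(R_u(Q_G(\la))\cdot v_0\bigr)=\mathfrak{g}_{<0}/(\opname{Lie} G_{v_0})_{<0}=(T_{v_0}O)_{<0}=T_{v_0}\mathcal{F}.
\]

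Then I would invoke the Bialynicki--Birula structure of the smooth affine $\mathbb{G}_m$-variety $O$: the attracting set $\mathcal{F}$ of the fixed point $v_0$ is a smooth affine space of dimension $\dim(T_{v_0}O)_{<0}$. As $R_u(Q_G(\la))\cdot v_0$ is a closed irreducible subvariety of $\mathcal{F}$ of that same dimension, the two coincide, so $v\in R_u(Q_G(\la))\cdot v_0$. I expect this identification of the attracting set with the unipotent-radical orbit to be the main obstacle; if one prefers to avoid quoting Bialynicki--Birula for affine varieties, it can be replaced by the remark that $\mathcal{F}$ is connected and contracted to the smooth point $v_0$, which forces $\mathcal{F}$ to be smooth and hence equal to the closed orbit $R_u(Q_G(\la))\cdot v_0$ with which it shares a tangent space at $v_0$.

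Finally I would address rationality. Because $K$ is perfect the unipotent group $R_u(Q_G(\la))$ is $K$-split, so the orbit map $R_u(Q_G(\la))\to R_u(Q_G(\la))\cdot v_0$ is surjective on $K$-points; since $v,v_0\in O(K)$ we obtain $u\in R_u(Q_G(\la))(K)$ with $u\cdot v_0=v$. Setting $g':=u^{-1}\in R_u(Q_G(\la))\subseteq\rad(Q_G(\la))$ then yields $g'\cdot[g]=v_0=[g]_\infty$, as desired. This last step is precisely where the perfectness of $K$ is used.
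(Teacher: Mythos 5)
Your argument is correct, but it takes a genuinely different route from the paper, which offers no argument at all for this proposition: it simply cites Theorem 3.3 of \cite{BMRT13}, attributing the algebraically closed case to Kraft and Kuttler (and, as written, the citation only addresses algebraically closed $K$, so your rationality step is a real addition rather than a redundancy). Your chain --- Matsushima to realize $G/H$ as a closed orbit $O=G\cdot v_*$ in a $G$-module, the weight decomposition to identify $[g]_\infty$ with the weight-zero component $v_0$, Kostant--Rosenlicht plus the tangent-space identity $T_{v_0}\bigl(R_u(Q_G(\la))\cdot v_0\bigr)=(T_{v_0}O)_{<0}$ to show the attracting set $\mathcal{F}=O\cap(v_0+V_{<0})$ equals the unipotent-radical orbit, and vanishing of $H^1$ for split unipotent groups to descend to $K$-points --- is essentially the classical proof behind the Kraft--Kuttler/BMRT statement. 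It makes visible exactly where reductivity of $H$ enters (closedness of $O$, so the limit can be computed weightwise and lands in $O$) and where perfectness of $K$ enters (splitness of $R_u(Q_G(\la))$ and of the stabilizer of $v_0$, hence triviality of the relevant torsor), and it yields the sharper conclusion that $g'$ can be taken in the unipotent radical, not merely in $\rad(Q_G(\la))$. The one step to phrase carefully is the appeal to Bia{\l}ynicki-Birula, which is usually quoted for complete varieties; for the affine orbit $O$ you should lean on your own fallback, which does close the argument: the inequalities $\dim_{v_0}\mathcal{F}\ge \dim R_u(Q_G(\la))\cdot v_0=\dim (T_{v_0}O)_{<0}\ge \dim T_{v_0}\mathcal{F}\ge \dim_{v_0}\mathcal{F}$ force $\mathcal{F}$ to be smooth at $v_0$ of the orbit's dimension, the $\mathbb{G}_m$-flow propagates smoothness to all of $\mathcal{F}$, and the $\mathbb{A}^1$-contraction onto $v_0$ gives connectedness, so $\mathcal{F}$ is irreducible and coincides with the closed orbit it contains. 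What the paper's citation buys is brevity; what your proof buys is a self-contained, transparent argument over the field actually required by the application.
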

\begin{proof}
 Kraft and Kuttler proved this in the case that $K$ is algebraically closed. The statement for algebraically closed $K$ is Theorem 3.3 in \cite{BMRT13}.
\end{proof}

\begin{lemma} \label{lem:mu(la,x)=0<=>la_in_G}
 Let $K$ be a perfect field over $\CC$ and $\la:K^*\to \SL_T(W\otimes K)$ a one-parameter subgroup. For $x:=[(\id_{W^t},t\in T)]\in \PP(H^\vee\otimes K)/\!\!/G(K)$ the following are equivalent:
 \begin{enumerate}
  \item There is a one-parameter subgroup $\la'$ of $G(K)$ and an element $g\in Q_{\SL_T(W\otimes K)}(\la)$  with $\ka_*\la'=g^{-1}\la g$.
  \item $\mu(\la,x)=0$.
 \end{enumerate}
\end{lemma}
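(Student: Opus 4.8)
The plan is to exploit that $x$ is a \emph{polystable} point and to convert the vanishing of its Hilbert--Mumford weight into a statement about limits in the orbit $\SL_T(W\otimes K)/\ka(G(K))$, to which Proposition~\ref{prop:BMRT} applies. Throughout write $\Gamma:=\SL_T(W\otimes K)$, recall that $H=\bigoplus_{t\in T}\End(W^t)$ carries commuting left $\Gamma$- and right $G$-actions, and that $x$ is the image of $\id:=(\id_{W^t},t\in T)$. Since $\id$ is an isomorphism, Lemma~\ref{lem:iso_<=>_stable},~(i), shows $x$ is $\Gamma$-polystable; in particular $\mu(\la_0,x)\ge 0$ for every one-parameter subgroup $\la_0$, the orbit $\Gamma\cdot x\cong\Gamma/\ka(G(K))$ is closed, and one has $\mu(\la,x)=0$ precisely when $x_\infty:=\lim_{c\to\infty}\la(c)\cdot x$ lies in $\Gamma\cdot x$ (the standard characterization of $\mu=0$ at a closed orbit).

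For the implication (i)$\Rightarrow$(ii) I would first note that $x$ is fixed by $\ka_*\la'$ for \emph{every} one-parameter subgroup $\la'$ of $G$: on the representative $\id$ the left action of $\ka_*\la'(c)$ coincides with the right action of $\la'(c)$, which is divided out in the $G$-quotient, so $\ka_*\la'(c)\cdot x=x$ for all $c$. Applying semistability to $\ka_*\la'$ and to its inverse and using $\mu(\nu^{-1},x)=-\mu(\nu,x)$ at a fixed point gives $\mu(\ka_*\la',x)=0$. Given (i), write $\la=g(\ka_*\la')g^{-1}$ with $g\in Q_\Gamma(\la)$. Equivariance of the Hilbert--Mumford weight yields $\mu(\la,g\cdot x)=\mu(g(\ka_*\la')g^{-1},g\cdot x)=\mu(\ka_*\la',x)=0$, and since $g\in Q_\Gamma(\la)$ the weight is constant along the parabolic, so $\mu(\la,x)=\mu(\la,g\cdot x)=0$.

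For (ii)$\Rightarrow$(i), the substantial direction, I would argue as follows. By the characterization recalled above, $\mu(\la,x)=0$ forces $x_\infty=\lim_{c\to\infty}\la(c)\cdot x$ to lie in $\Gamma\cdot x=\Gamma/\ka(G(K))$. The hypotheses of Proposition~\ref{prop:BMRT} are then met with ambient group $\Gamma$ and closed reductive subgroup $\ka(G(K))$, and it produces an element $g\in\rad(Q_\Gamma(\la))\subset Q_\Gamma(\la)$ with $x_\infty=g\cdot x$. Since $x_\infty$ is $\la$-fixed, $\la$ factors through $\opname{Stab}_\Gamma(x_\infty)=g\,\opname{Stab}_\Gamma(x)\,g^{-1}$; as the identity component of $\opname{Stab}_\Gamma(x)$ is $\ka(G(K))$, conjugating back gives $g^{-1}\la(c)g\in\ka(G(K))$ for all $c$, i.e.\ $g^{-1}\la g=\ka_*\la'$ for a one-parameter subgroup $\la'$ of $G$. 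With $g\in Q_\Gamma(\la)$ this is exactly (i).

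The main obstacle is this hard direction, and within it the indispensable role of Proposition~\ref{prop:BMRT}: the naive argument only supplies \emph{some} $h\in\Gamma$ with $h^{-1}\la h=\ka_*\la'$, whereas (i) requires the conjugating element to lie in the parabolic $Q_\Gamma(\la)$, and it is precisely Proposition~\ref{prop:BMRT} that realizes the limit $x_\infty$ by an element of $\rad(Q_\Gamma(\la))$. A secondary technical point is the identification of $\opname{Stab}_\Gamma(x)$ with $\ka(G(K))$ up to a finite central group: the scalar ambiguity coming from the projectivization meets $\Gamma$ only in the finite group of $(\dim W)$-th roots of unity, so it does not affect $g^{-1}\la g$, whose image is connected and therefore lands in $\ka(G(K))$.
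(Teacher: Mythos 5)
Your argument is correct and follows essentially the same route as the paper: for (ii)$\Rightarrow$(i) both proofs take the limit $x_\infty=\lim_{c\to\infty}\la(c)\cdot x$, use Lemma~\ref{lem:iso_<=>_stable} (equivalently, closedness of the orbit of the polystable point $x$) to place $x_\infty$ in $\Gamma\cdot x$, invoke Proposition~\ref{prop:BMRT} to realize $x_\infty=g\cdot x$ with $g$ in the radical of $Q_{\SL_T(W\otimes K)}(\la)$, and conclude that $\la$ factors through $g\,\ka(G(K))\,g^{-1}$. Your additional remarks --- the explicit fixed-point computation for (i)$\Rightarrow$(ii) and the observation that the scalar ambiguity in the stabilizer is finite and hence irrelevant for the connected image of $\la$ --- only make explicit what the paper leaves implicit.
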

\begin{proof}
 The implication (i)$\Rightarrow$(ii) is obvious because $G$ stabilizes $[\id_W]\in H/\!\!/G $. Now suppose $\mu(\la,x)=0$ and let $x_\infty:=\lim_{c\to \infty}\la(c)\cdot x$. Then by Lemma \ref{lem:iso_<=>_stable} there exists $g\in \GL_T(W\otimes K)$ such that $[g]=x_\infty$. By Proposition \ref{prop:BMRT} we can choose $g$ in the radical of $Q_{\SL_T(W\otimes K)}(\la)$. Since $\la$ leaves $x_\infty$ invariant, $\la$ factors through the stabilizer $g\ka(G(K))g^{-1}$
\end{proof}

\begin{proposition} \label{prop:la_in_G<=>mu_1=0}
 Let $(\PB,s)$ be a decorated principal bundle, $(E,\mc{O}_X,\phi,s)$ the associated decorated tump and $(E_\bullet,\tup{\alpha})$ a weighted flag. Then the following are equivalent:
 \begin{enumerate}
  \item There is a weighted reduction $(\la,\beta)$ of $\PB$ inducing $(E_\bullet,\tup{\alpha})$.
  \item $\mu_1(E_\bullet,\tup{\alpha},\phi)=0$.
 \end{enumerate}
\end{proposition}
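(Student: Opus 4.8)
The plan is to establish the equivalence of Proposition~\ref{prop:la_in_G<=>mu_1=0} by passing to the generic point and reducing the statement about weighted reductions of $\PB$ to the local statement of Lemma~\ref{lem:mu(la,x)=0<=>la_in_G}. First I would fix a weighted flag $(E_\bullet,\tup{\alpha})$ of $E$ and recall that $E$ carries the reduction of the structure group $\tau:X\to \Iso_T(W,E)/G$ encoding $\PB$. Restricting to the generic point $\eta$ with function field $K$, the point $\tau(\eta)$ trivializes $E_{|\eta}\cong W\otimes K$ up to the action of $G(K)$, so that $x:=[(\id_{W^t},t\in T)]\in \PP(H^\vee\otimes K)/\!\!/G(K)$ represents the decoration $\phi$ at $\eta$. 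By construction of $\phi_S$ as the composition of $\tau^{d!}$ with the Veronese-type surjection and by Lemma~\ref{lem:mu>0<=>mu>0}, the value $\mu_1(E_\bullet,\tup{\alpha},\phi)=\mu(\la,v[h])$ vanishes exactly when $\mu(\la,x)=0$, where $\la$ is the one-parameter subgroup of $\SL_T(W\otimes K)$ associated to the flag $(E_\bullet,\tup{\alpha})$ via the chosen trivialization.

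For the implication (i)$\Rightarrow$(ii), I would observe that if $(\la',\beta)$ is a weighted reduction of $\PB$ inducing $(E_\bullet,\tup{\alpha})$, then after trivializing $E$ over an open set via $\beta$, the image one-parameter subgroup $\ka_*\la'$ differs from $\la$ only by an element of the parabolic, i.e.\ condition (i) of Lemma~\ref{lem:mu(la,x)=0<=>la_in_G} holds; that lemma then gives $\mu(\la,x)=0$, hence $\mu_1(E_\bullet,\tup{\alpha},\phi)=0$. For the converse (ii)$\Rightarrow$(i), suppose $\mu_1(E_\bullet,\tup{\alpha},\phi)=0$, so $\mu(\la,x)=0$. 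Lemma~\ref{lem:mu(la,x)=0<=>la_in_G} then produces a one-parameter subgroup $\la'$ of $G(K)$ and an element $g\in Q_{\SL_T(W\otimes K)}(\la)$ with $\ka_*\la'=g^{-1}\la g$. The task is then to globalize this local datum: the conjugating element $g$ together with $\la'$ should determine a reduction of the structure group $\beta:X\to \PB/Q_G(\la')$ recovering the original flag $(E_\bullet,\tup{\alpha})$.

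The main obstacle I expect is precisely this globalization step. Lemma~\ref{lem:mu(la,x)=0<=>la_in_G} is a statement over the perfect field $K$, hence purely generic, and one must argue that the parabolic reduction defined at the generic point extends to a genuine section $\beta:X\to \PB/Q_G(\la')$ over all of $X$. The resolution should use that $G/Q_G(\la')$ is a projective variety, so that the rational section $\beta$ defined over $\eta$ extends uniquely to the smooth projective curve $X$ by the valuative criterion of properness; one then checks that the weighted flag induced by $(\la',\beta)$ on the associated $T$-split vector bundle agrees with $(E_\bullet,\tup{\alpha})$, which follows from the compatibility of the trivializations and the fact that the flag is already determined generically. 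I would carry out the extension argument carefully, as this is the only genuinely global part of the proof, the rest being a translation of the generic picture through the dictionary between principal bundle reductions and weighted flags established in the preceding subsections.
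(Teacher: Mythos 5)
Your proposal follows essentially the same route as the paper: both directions are reduced to Lemma~\ref{lem:mu(la,x)=0<=>la_in_G} at the generic point via a trivialization of $\PB$, and the resulting parabolic reduction is then globalized from $\eta$ to all of $X$ using properness of $G/Q$. The only detail you gloss over, which the paper handles explicitly, is that the one-parameter subgroup $\la'$ produced over $K$ must first be conjugated (absorbing the conjugating element $g$ into the trivialization) to a one-parameter subgroup $\la:\CC^*\to G$ defined over $\CC$, so that $Q_G(\la)$ and the reduction $\beta:X\to\PB/Q_G(\la)$ make sense as required by the definition of a weighted reduction.
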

 This is Proposition 2.6.3.4 in \cite{Schmitt08}. Using Proposition \ref{prop:BMRT} we can slightly simplify  the proof.
\begin{proof}
 The implication (i)$\Rightarrow$(ii) follows from the corresponding statement in Lemma \ref{lem:mu(la,x)=0<=>la_in_G}. Now suppose that $\mu_1(E_\bullet,\tup{\alpha},\phi)=0$. Let $f:U\to X$ be an fppf-morphism and $h:f^*\PB\to U\times G$ a trivialization. Let $\eta$ be the generic point of $U$ and $K$ its function field. Then, $h$ also induces trivializations $f^*E\cong W\otimes \mc{O}_U$ of $T$-split vector bundles and $f^*\Aut(\PB)\cong G(K)$. We consider the $T$-split $K$-vector space $\EE:=f^*E_{|\eta}$. 
 Choose a one-parameter subgroup $\Lambda:K^*\to \SL_T(\EE)$ with associated weighted flag $(E_{\bullet|\eta},\tup{\alpha})$. By assumption we have $\mu(\Lambda,f^*\phi(\tau))=0$, so that by Lemma \ref{lem:mu(la,x)=0<=>la_in_G} there is a one-parameter subgroup $\Lambda'$ of $G(K)$ inducing the same weighted flag. Finally, there are a one-parameter subgroup $\la:\CC^*\to G$ and an element $g\in G(K)$ such that $\la\times \id_{\Spec(K)}=g\cdot \Lambda'\cdot g^{-1}$.
 
 The flag $E_\bullet$ determines a reduction $\beta':X\to \Iso_T(W,E)/Q_{\GL_T(W)}(\la)$. Let us consider the trivialization $h':=g\cdot h$. Then $h'\circ\beta'_{|\eta}=[\id_W\otimes \id_K]$ lies in $G(K)/Q_{G(K)}(\la)$ and thus $\beta$ extends to a reduction $\beta:X\to \PB/Q_{G}(\la)$.
\end{proof}

\begin{proposition}[Semistable Reduction]
A decorated pseudo principal bundle is $(\delta,\tup{\chi})$-\sstable{} if and only if it comes from a $(\delta,\chi)$-\sstable{} decorated principal bundle.
\end{proposition}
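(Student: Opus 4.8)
The plan is to reduce both implications to results already in hand: the characterization of asymptotic stability in Remark \ref{rem:asympt_stability}, the reformulation of principal-bundle stability as an inequality on weighted flags in Lemma \ref{lem:PB_stable<=>VB_stable}, the identification of the flags arising from genuine weighted reductions in Proposition \ref{prop:la_in_G<=>mu_1=0}, and the injectivity of the assignment from pseudo principal bundles to tumps in Lemma \ref{lem:PPB_to_tump_injective}, together with the criterion that a decorated tump lies in the image of a decorated principal bundle exactly when $\mu_1(E_\bullet,\tup{\alpha},\phi)\ge 0$ for all weighted flags. The conceptual heart of the argument is that, by Proposition \ref{prop:la_in_G<=>mu_1=0}, the weighted flags induced by weighted reductions of $\PB$ are precisely those with $\mu_1=0$, and these are exactly the flags on which condition (ii) of Remark \ref{rem:asympt_stability} imposes the inequality $M_{\tup{\ka},\tup{\chi}}(E_\bullet,\tup{\alpha})+\delta\mu_2(E_\bullet,\tup{\alpha},s)(\ge)0$, which is the same inequality appearing in Lemma \ref{lem:PB_stable<=>VB_stable}. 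So the two stability notions test identical inequalities over identical sets of flags.

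For the implication starting from a principal bundle, I would take a pseudo principal bundle $(E,\tau,s)$ coming from a $(\delta,\chi)$-\sstable{} decorated principal bundle $(\PB,s)$ and verify the two conditions of Remark \ref{rem:asympt_stability} for the associated tump $(E,\mc{O}_X,\phi,s)$. Condition (i) is immediate, since the tump lies in the image of a decorated principal bundle and hence satisfies $\mu_1\ge 0$. For condition (ii) I would take a weighted flag with $\mu_1=0$, use Proposition \ref{prop:la_in_G<=>mu_1=0} to produce a weighted reduction inducing it, and then apply Lemma \ref{lem:PB_stable<=>VB_stable} to the $(\delta,\chi)$-\sstable{} bundle $(\PB,s)$ to obtain the required inequality. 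This shows the tump is asymptotically $(\delta,\tup{\ka},\tup{\chi})$-\sstable, i.e.\ that $(E,\tau,s)$ is $(\delta,\tup{\chi})$-\sstable.

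For the converse I would start from a $(\delta,\tup{\chi})$-\sstable{} pseudo principal bundle, so that its tump is asymptotically \sstable. Condition (i) of Remark \ref{rem:asympt_stability} gives $\mu_1\ge 0$ on all flags, so the criterion above produces a decorated principal bundle $(\PB,s)$ whose associated tump equals that of $(E,\tau,s)$; invoking the injectivity in Lemma \ref{lem:PPB_to_tump_injective} I would conclude that the pseudo principal bundle attached to $(\PB,s)$ is isomorphic to $(E,\tau,s)$, so that $(E,\tau,s)$ genuinely comes from $(\PB,s)$. To see that $(\PB,s)$ is $(\delta,\chi)$-\sstable, Lemma \ref{lem:PB_stable<=>VB_stable} reduces the task to checking the inequality on flags induced by weighted reductions; each such flag has $\mu_1=0$ by Proposition \ref{prop:la_in_G<=>mu_1=0}, whence condition (ii) of Remark \ref{rem:asympt_stability} supplies the inequality.

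I expect the main obstacle to lie in the converse, specifically in the step passing from ``the tump is in the image of a decorated principal bundle'' to ``$(E,\tau,s)$ itself comes from one'': here one must be careful to use the injectivity of Lemma \ref{lem:PPB_to_tump_injective} to identify the reconstructed pseudo principal bundle with the original, rather than merely matching tumps. Once Proposition \ref{prop:la_in_G<=>mu_1=0} is available, everything else is formal, since that proposition is exactly what aligns the flags from reductions with the locus $\mu_1=0$ on which asymptotic stability is tested; I do not anticipate any genuine computation.
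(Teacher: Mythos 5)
Your proposal is correct and follows essentially the same route as the paper: both directions are reduced to Remark \ref{rem:asympt_stability}, Lemma \ref{lem:PB_stable<=>VB_stable}, Proposition \ref{prop:la_in_G<=>mu_1=0}, and the criterion for a decorated tump to come from a decorated principal bundle. Your extra remark about invoking the injectivity of Lemma \ref{lem:PPB_to_tump_injective} to identify the reconstructed pseudo principal bundle with the original is a slightly more careful phrasing of a step the paper handles implicitly via Lemma \ref{lem:pseudo_PB_is_PB}, but the substance is the same.
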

\begin{proof}
 Suppose $(E,\tau,s)$ is a $(\delta,\tup{\chi})$-\sstable{} decorated pseudo principal bundle. By part (i) of Remark \ref{rem:asympt_stability} and Lemma \ref{lem:pseudo_PB_is_PB}, $(E,\tau,s)$ is the image of a decorated principal bundle $(\PB,s)$. The implication (i)$\Rightarrow$(ii) in Proposition \ref{prop:la_in_G<=>mu_1=0}, part (ii) of Remark \ref{rem:asympt_stability} and Lemma \ref{lem:PB_stable<=>VB_stable} show that $(\PB,s)$ is $(\delta,\chi)$-\sstable{}.

 Now suppose $(\PB,s)$ is a $(\delta,\chi)$-\sstable{} decorated principal bundle, $(E,\tau,s)$ the associated decorated pseudo principal bundle and $(E_\bullet,\tup{\alpha})$ a weighted flag of $E$. Because of Lemma \ref{lem:pseudo_PB_is_PB} we have $\mu_1(E_\bullet,\tup{\alpha},\phi(\tau))\ge 0$. If $\mu_1(E_\bullet,\tup{\alpha},\phi(\tau))=0$ holds, then by Proposition \ref{prop:la_in_G<=>mu_1=0} there exists a weighted reduction $(\la,\beta)$ of $\PB$ which induces $(E_\bullet,\tup{\alpha})$. By Lemma \ref{lem:PB_stable<=>VB_stable} we have
 \[
 M_{\tup{\ka},\tup{\chi}}(E_\bullet,\tup{\alpha})+ \delta \mu_2(E_\bullet,\tup{\alpha},s) (\ge) 0\,.
 \]
Thus, $(E,\tau,s)$ is $(\delta,\tup{\chi})$-\sstable{}.
 
\end{proof}

\begin{corollary}
The category of $(\delta,\tup{\chi})$-\sstable{} decorated pseudo principal bundles is equivalent to the category of $(\delta,\chi)$-\sstable{} decorated principal bundles.
\end{corollary}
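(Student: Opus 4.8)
The plan is to realize the equivalence through the functor $F$ that sends a $(\delta,\chi)$-\sstable{} decorated principal bundle $(\PB,s)$ to its associated decorated pseudo principal bundle $(E,\tau(\beta),s)$, and to prove that $F$ is an equivalence of groupoids by checking that it is essentially surjective and fully faithful; since both categories have only isomorphisms as morphisms, this is enough.

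First I would verify that $F$ restricts to a functor between the \sstable{} subcategories. On objects this is the forward implication of the Semistable Reduction Proposition. On morphisms, an isomorphism $f\colon\PB\to\PB'$ induces an isomorphism $E\to E'$ of the associated $T$-split vector bundles that respects the reductions of the structure group; it therefore intertwines $\tau(\beta)$ with $\tau(\beta')$ and preserves $s$, so $F(f)$ is a legitimate isomorphism of the associated pseudo principal bundles. Essential surjectivity is precisely the Semistable Reduction Proposition: every $(\delta,\tup{\chi})$-\sstable{} pseudo principal bundle arises from a $(\delta,\chi)$-\sstable{} principal bundle. Faithfulness is immediate, since via $\ka$ a $G$-bundle is a reduction of an $\SL_T(W)$-bundle, so an isomorphism of principal $G$-bundles is determined by the induced map $E\to E'$ of associated $T$-split vector bundles.

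The substance of the argument is fullness. Given an isomorphism $g\colon(E,\tau,s)\to(E',\tau',s')$ of pseudo principal bundles, I must produce an isomorphism of the underlying principal bundles inducing it. The reductions $\beta,\beta'$ are sections of $\Iso_T(W,E)/G$ and $\Iso_T(W,E')/G$, and composing with the inclusions into $\Hom_T(W,-)/\!\!/G$ yields exactly the sections encoded by $\tau$ and $\tau'$. The intertwining relation $\tau\circ\tilde g=\tau'$ says that the isomorphism $E\to E'$ carries the first section to the second. By Lemma \ref{lem:iso_<=>_stable} an isomorphism in $H$ maps to a polystable point and, conversely, a semistable point is an isomorphism; since the $G$-orbit of an isomorphism is closed with trivial stabiliser, the induced map $\Iso_T(W,E)/G\to\Hom_T(W,E)/\!\!/G$ is injective. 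As $\beta$ takes values in $\Iso_T(W,E)/G$ over all of $X$, it is recovered from its image, so $g$ already carries $\beta$ to $\beta'$ and hence lifts to an isomorphism $\PB\to\PB'$; the condition on $s$ coincides in the two categories, so this lift is an isomorphism of decorated principal bundles with $F(\mathrm{lift})=g$.

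I expect the last identification to be the main obstacle: recovering the reduction of structure group, a section of $\Iso/G$, from the coarser datum $\tau$, a section of the affine GIT quotient $\Hom/\!\!/G$. This is exactly where Lemma \ref{lem:iso_<=>_stable} and the fact that the GIT quotient separates closed orbits do the real work.
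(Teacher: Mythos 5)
Your proposal is correct and follows the route the paper leaves implicit: essential surjectivity is exactly the Semistable Reduction proposition, and full faithfulness reduces to recovering the reduction $\beta$ from $\tau$ via the injectivity of $\Iso_T(W,E)/G\to\Hom_T(W,E)/\!\!/G$, which is the content of Lemma \ref{lem:pseudo_PB_is_PB} (Schmitt, Lemma 2.6.3.1). One small remark: the injectivity rests on your (correct) observation that the $G$-orbit of an isomorphism is closed with trivial stabiliser, rather than on Lemma \ref{lem:iso_<=>_stable}, which concerns $\SL_T(W)$-stability of the image point and is not really needed here.
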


%%%%%%%%%%%%%%%%%%%%%
\subsection{Construction of the Parameter Space}
 As in the case of tumps, the parameter space is constructed as the fine moduli space of another moduli functor.  Let $Y^t$ be a complex vector space of dimension $p_t(n):=d_t+r_t(n+1-g)$, $t\in T$, and set $Y:=(Y^t,t\in T)$.

\begin{definition}
 A \emph{family of decorated quotient pseudo principal bundles} parametrized by a scheme $S$ is tuple $(q_S,N_S,\tau_S,s_S)$, where $q_S:Y\otimes\pr_X^*\mc{O}_X(-n)\to E_S$ is a quotient of $T$-split vector bundles with $\tup{r}(E)=\tup{r}$, such that
 \[
  \pr_{S*}(q_S\otimes\id_{\pr_X^*\mc{O}_X(n)}):Y\otimes \mc{O}_S\to \pr_{S*}(E_S\otimes \pr_X^*\mc{O}_X(n))
 \]
is an isomorphism, and $(E_S,N_S,\tau_S,s_S)$ is a family of decorated pseudo principal bundles.
\end{definition}

\begin{proposition} \label{prop:QPPB_exists}
 The fine moduli space $\QPPB_n$ of decorated quotient pseudo principal bundles exists.
\end{proposition}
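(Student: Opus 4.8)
The plan is to build $\QPPB_n$ in close parallel to the parameter space $\QTmp_n$ of Proposition \ref{prop:QTmp_exists}, the only genuinely new ingredient being that the decoration is now the rigid algebra homomorphism $\tau$ rather than a $\rho$-decoration taken up to scalar. First I would start, exactly as there, from the product of Quot schemes $\Quot_n=\prod_{t\in T}\Quot_n^{t,0}$, carrying the universal $T$-split quotient $q\colon Y\otimes\pr_X^*\mc{O}_X(-n)\to Q$ with each $Q^t$ locally free and $H^0(q^t(n))$ an isomorphism. Over $\Quot_n\times X$ there remain two pieces of data to parametrize: the homomorphism $\tau$ and the section $s$ together with its target $N$. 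Note that here the target of $\tau$ is the fixed bundle $\pr_X^*\mc{O}_X$, so, in contrast to the tump case, no Jacobian factor is needed.

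The section $s$ is handled just as for tumps: restricting the associated $\sigma$-bundle to $\{x_0\}$ yields a vector bundle $Q_{\sigma\mid\Quot_n\times\{x_0\}}$ on $\Quot_n$, and with the convention $\PP(-)=\Proj(\Sym^*-)$ its projectivization $\PP(Q_{\sigma\mid\Quot_n\times\{x_0\}})$ is the fine parameter space for the surjections $E_{S,\sigma\mid x_0}\to N_S$, with universal target $\mc{O}(1)$.

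The essential new point is the parametrization of $\tau$. Since $\Hom_T(W,Q)/\!\!/G=\Spec_{\Quot_n\times X}\Sym\bigl(\bigoplus_{t\in T}(Q^t)^\vee\otimes W^t\bigr)^G$ is affine over $\Quot_n\times X$ (a GIT quotient of an affine scheme by the reductive group $G$), giving $\tau$ is the same as giving a section of this affine bundle over $X$, i.e. a morphism $X\to\Hom_T(W,Q)/\!\!/G$ over $\Quot_n$. I would show that the relative functor of such sections is representable by a scheme $\mathbb{T}$ affine over $\Quot_n$: choosing algebra generators in degrees $\le d$, a section is a tuple of $\mc{O}_X$-module homomorphisms $\tau_j\colon\Sym^j\bigl(\bigoplus_{t\in T}(Q^t)^\vee\otimes W^t\bigr)^G\to\mc{O}_X$ for $1\le j\le d$; each $\tau_j$ is a point of a relative $\Hom$ scheme, representable by a linear scheme over $\Quot_n$ by the standard theory of relative Hom functors, and the multiplicativity relations $\tau_{i+j}=\tau_i\cdot\tau_j$ cut out a closed subscheme. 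The non-triviality demanded in the definition is an open condition, defining an open subscheme $\mathbb{T}^\circ\subset\mathbb{T}$.

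I would then set $\QPPB_n:=\mathbb{T}^\circ\times_{\Quot_n}\PP(Q_{\sigma\mid\Quot_n\times\{x_0\}})$, equipped with the universal family $(q,N,\tau,s)$ obtained by pulling back $q$, the universal section and the universal point. The universal properties of $\Quot_n$, of the relative section scheme and of the projective bundle together identify this family as universal, so $\QPPB_n$ represents the moduli functor. Fineness is automatic: since $q$ is a quotient of the fixed sheaf $Y\otimes\mc{O}_X(-n)$ with $H^0(q(n))$ an isomorphism, the only automorphism of such an object fixing $q$ is the identity, so the functor has no nontrivial automorphisms and the fine moduli space exists. The main obstacle is precisely the representability of the functor of $\tau$ in the previous paragraph; once that is secured — it rests on the affineness of the GIT quotient $\Hom_T(W,Q)/\!\!/G$ and the representability of relative Hom functors, as in \cite{Schmitt08}, \S2.6 — everything else is formal and runs parallel to Proposition \ref{prop:QTmp_exists}.
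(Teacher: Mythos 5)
Your overall architecture matches the paper's: both build $\QPPB_n$ over the product of Quot schemes $\Quot_n$, attach the decoration $s$ via the projective bundle $\PP(Q_{\sigma})$ restricted to $\{x_0\}$, and treat the parametrization of $\tau$ as the only real issue. Where you diverge is in how $\tau$ is parametrized. You invoke the abstract representability of the relative section functor of the affine quotient $\Hom_T(W,Q)/\!\!/G$ over $\Quot_n\times X$, reducing to relative Hom functors out of the locally free sheaves $\Sym^j\bigl(\bigoplus_{t\in T}(Q^t)^\vee\otimes W^t\bigr)^G$ (locally free because the $G$-invariants are a direct summand via the Reynolds operator) and then imposing the multiplicativity relations as closed conditions. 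The paper instead presents $(Q^t)^\vee$ explicitly as a quotient of a sheaf of the form $\pr^*\mc{A}^\vee\otimes\bigwedge^{r-1}\bigl(Y_{\textnormal{tot}}\otimes\pr_X^*\mc{O}_X(-n)\bigr)$, builds a concrete bundle $P_2$ of candidate homomorphisms $\phi_i$ into $\pr_X^*\mc{O}_X(in(r-1))$ for $n\gg 0$, and cuts out the closed subscheme $P_3$ where the resulting algebra homomorphism factors through $\Sym\bigl(\bigoplus_t W^t\otimes (Q^t)^\vee\bigr)^G$. Both routes are legitimate; yours is shorter and rests on EGA-style representability of Hom functors, while the paper's buys an explicit description of the parameter space as a subscheme of a vector bundle over the Quot scheme, in the style of \cite{Schmitt08}. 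You should also spell out that agreement of two sheaf homomorphisms is a closed condition on the base (via the same representability statement), rather than just asserting it.

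There is, however, one concrete omission: you never impose the trivial-determinant condition. A (decorated) pseudo principal bundle requires $\bigotimes_{t\in T}\det(E^t)\cong\mc{O}_X$, since $G$ embeds into $\SL_T(W)$; a point of $\Quot_n$ only fixes the degree vector, not the determinant. This is why the paper first passes to the fiber $P_1$ of the morphism $\Quot_n\to\Jac^0$ given by $\det(Q_{\textnormal{tot}})$ over the point $[\mc{O}_X]$ (and why the line bundle $\mc{A}$ with $\det(Q_{\textnormal{tot}})\cong\pr_{P_1}^*\mc{A}$ enters its construction). Your assertion that, in contrast to the tump case, no Jacobian is needed is true for the target of $\tau$, but the Jacobian $\Jac^0$ still plays a role in cutting out the correct locus. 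Without restricting to this closed subscheme your space represents a strictly larger functor. The fix is immediate -- replace $\Quot_n$ by $P_1$ throughout -- but as written the construction does not produce the fine moduli space of the functor defined in the paper.
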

\begin{proof}
 Let $\Quot_n$ be the scheme from Proposition \ref{prop:QTmp_exists} with the universal quotient of $T$-split vector bundles $q:Y\otimes\pr_X^*\mc{O}_X(-n)\to Q$. The determinant of the total sheaf $Q_{\textnormal{tot}}:=\bigoplus_{t\in T} Q^t$ determines a morphism $\Quot_n\to \Jac^0$. Let $P_1$ be the fiber over the point corresponding to the trivial bundle $\mc{O}_X$. Then there is a line bundle $\mc{A}$ on $P_1$, such that $\det(Q_{\textnormal{tot}})\cong \pr_{P_1}^*\mc{A}$. Using the canonical isomorphism
 \[
  Q_{\textnormal{tot}}^\vee = \det(Q_{\textnormal{tot}})^\vee\otimes \bigwedge^{r-1}Q_{\textnormal{tot}}\,.
 \]
the surjection $Y\otimes\pr_X^*\mc{O}_X(-n)\to Q$ on $P_1\times X$ induces a surjection
\[
 \Sym\left(W\otimes
\pr_{P_1}^*\mc{A}^\vee \otimes\bigwedge^{r-1}\bigl(Y_{\textnormal{tot}}\otimes\pr_X^*(\mc{O}_X(-n))\bigr)\right)^G \to
 \Sym\left(\bigoplus_{t\in T} W^t\otimes {Q^{t}}^\vee\right)^G
\]
with $Y_{\textnormal{tot}}:=\bigoplus_{t\in T} Y^t$. Over $P_1$ we consider the sheaf
\[
 P_2:=\bigoplus_{i=1}^d \Hom\left(\Sym^{i}\left(W\otimes
\pr_{P_1}^*\mc{A}^\vee\otimes \bigwedge^{r-1}Y_{\textnormal{tot}}\right)^{\!\!G}\!\!,\,\pr_{P_1*}\pr^*_X\mc{O}_X(i n(r-1)) \right)\,.
\]
For large enough $n$ this sheaf is locally free and the evaluation maps
\[
 \ev_i:H^0(X,\mc{O}_X(i n(r-1)))\otimes\mc{O}_{P_2\times X}\to \pr_X^*\mc{O}_X(i n(r-1))\,,\qquad i=1,\ldots,d
\]
 are surjective. We compose these with the tautological homomorphisms to construct the homomorphisms
\[
 \phi_i\colon \Sym^{i}\left(W\otimes
\pr_{P_1}^*\mc{A}^\vee \otimes \bigwedge^{r-1}Y_{\textnormal{tot}} \right)^G\!\!\!\! \to \,\pr_X^*\mc{O}_X(in(r-1))\,,\quad i=1,\ldots,d 
\]
on $P_2\times X$. We tensor each $\phi_i$ with $\id_{\pr_X^*\mc{O}_X(-in(r-1))}$ and consider the homomorphism
\[
 \phi\colon \mc{W}:=\bigoplus_{i=1}^d\Sym^{i}\left(W\otimes\pr_{P_1}^*\mc{A}^\vee \otimes\bigwedge^{r-1}\bigl(Y_{\textnormal{tot}}\otimes
\pr^*_X\mc{O}_X(-n)\bigr) \right)^G \to \mc{O}_{P_2\times X}\,.
\]
This determines a homomorphism of $\mc{O}_{P_2\times X}$-algebras
\[
 \tau_2\colon \Sym(\mc{W})\to \mc{O}_{P_2\times X}\,,
\]
By our choice of $d$ we also have a surjection of graded $\mc{O}_{P_2\times X}$-algebras
\[
 \Sym(\mc{W})\to \Sym\left(\bigoplus_{t\in T}W^t \otimes
{\pr_{\Quot_n}^*Q^{t}}^\vee \right)^G\,.
\]
Let $P_3\subset P_2$ be the closed subscheme, such that $\tau_2$ factors through this surjection over $P_3\times X$. We obtain the homomorphism
\[
 \tau_3\colon \Sym\left(\bigoplus_{t\in T}W^t \otimes \pr_{\Quot}^*{Q^t}^\vee\right)^G\to
\mc{O}_{P_3\times X}\,.
\]
Let $\QPPB_n$ be the associated projective bundle $\PP(\pr_{\Quot_n}^*Q_{\sigma})_{|P_3\times\{x_0\}}$ over $P_3$. On $\QPPB_n \times X$ we have the quotient $\tilde{q}:=\pr_{\Quot_n}^*q$ of $T$-split vector bundles and the homomorphism $\tilde{\tau}:=\pr_{P_3\times X}^*\tau_3$. Together with the tautological homomorphism $\tilde{s}:\tilde{E}_\sigma\to \tilde{N}:=\mc{O}_{\PP(\QPPB_n)}(1)$ we have a universal family $(\tilde{q},\tilde{N},\tilde{\tau},\tilde{s})$ of decorated quotient pseudo principal bundles. 
\end{proof}

%%%%%%%%%%%%%%%%%%%%%%%%%%%%%%%%%
\subsection{Proof of Theorem \ref{thm:moduli_space_of_dec_PB}}

Let $\QTmp_n$ be the moduli space of decorated quotient tumps from Proposition \ref{prop:QTmp_exists} and $\QPPB_n$ the moduli space of decorated quotient pseudo principal bundles from Proposition \ref{prop:QPPB_exists}. The associated family of decorated quotient tumps determines a $\GL_T(Y)$-equivariant morphism $f:\QPPB_n\to \QTmp_n$. We let 
\[
\QPPB_n^{(\delta,\tup{\chi})\textnormal{-(s)s}}:=f^{-1}\left(\QTmp_n^{(\Delta,\delta,\tup{\ka},\tup{\chi})\textnormal{-(s)s}}\right)
\]
denote the open subscheme of \sstable{} objects.

By Proposition \ref{prop:asympt_bounded} the class of asymptotically $(\delta,\tup{\ka},\tup{\chi})$-semistable decorated tumps is bounded. Hence, there exists an $n_0$ such that for every $n\ge n_0$ and every decorated pseudo principal bundle $(E,\tau,s)$ the $T$-split bundle $E(n)$ is globally generated and $H^1(E^t(n))$ vanishes for all $t\in T$. An easy consequence is the following lemma.
\begin{lemma}
 The family $(\tilde{E},\tilde{N},\tilde{\tau},\tilde{s})$ on $\QPPB_n^{(\delta,\tup{\chi})\textnormal{-(s)s}}$ satisfies the local universal property for families of $(\delta,\tup{\chi})$-\sstable{} decorated pseudo principal bundles.
\end{lemma}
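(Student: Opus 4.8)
The plan is to verify the local universal property directly. Given an arbitrary family $\mc{G}=(E_S,N_S,\tau_S,s_S)$ of $(\delta,\tup{\chi})$-\sstable{} decorated pseudo principal bundles parametrized by a scheme $S$ and a point $y\in S$, I would produce an open neighborhood $U$ of $y$ together with a morphism $g\colon U\to \QPPB_n^{(\delta,\tup{\chi})\textnormal{-(s)s}}$ such that $g^*(\tilde{E},\tilde{N},\tilde{\tau},\tilde{s})\cong \mc{G}_{|U}$.

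First I would exploit boundedness. Since $\mc{G}$ is $(\delta,\tup{\chi})$-\sstable{}, every fiber $E_{S|\{y\}\times X}$ belongs to the bounded family of asymptotically $(\delta,\tup{\ka},\tup{\chi})$-semistable decorated tumps, so by the choice $n\ge n_0$ each $E^t(n)$ is globally generated with $H^1(E^t(n))=0$ in every fiber. Consequently the direct image $\pr_{S*}(E_S^t\otimes\pr_X^*\mc{O}_X(n))$ is locally free of rank $p_t(n)=\dim Y^t$, and its formation commutes with base change.

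Second, I would shrink $S$ to an open neighborhood $U$ of $y$ on which each of these direct images is trivial, and choose an isomorphism $Y^t\otimes\mc{O}_U\cong \pr_{U*}(E^t_{S|U}\otimes\pr_X^*\mc{O}_X(n))$ for each $t\in T$. By adjunction together with the global generation this yields a surjection $q_U\colon Y\otimes\pr_X^*\mc{O}_X(-n)\to E_{S|U}$ whose push-forward $\pr_{U*}(q_U\otimes\id_{\pr_X^*\mc{O}_X(n)})$ is an isomorphism. Hence the tuple $(q_U,N_{S|U},\tau_{S|U},s_{S|U})$ is a family of decorated quotient pseudo principal bundles in the sense of the definition preceding Proposition \ref{prop:QPPB_exists}.

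Third, I would invoke the fine moduli property of $\QPPB_n$: this family is classified by a unique morphism $g\colon U\to \QPPB_n$ with $g^*(\tilde{q},\tilde{N},\tilde{\tau},\tilde{s})$ isomorphic to $(q_U,N_{S|U},\tau_{S|U},s_{S|U})$, and forgetting the quotient gives $g^*(\tilde{E},\tilde{N},\tilde{\tau},\tilde{s})\cong\mc{G}_{|U}$. Since $\mc{G}_{|U}$ is $(\delta,\tup{\chi})$-\sstable{}, the fibers of its associated family of decorated tumps are asymptotically $(\delta,\tup{\ka},\tup{\chi})$-\sstable{}, so $g$ factors through $\QPPB_n^{(\delta,\tup{\chi})\textnormal{-(s)s}}=f^{-1}(\QTmp_n^{(\Delta,\delta,\tup{\ka},\tup{\chi})\textnormal{-(s)s}})$; this is exactly where the semistability hypothesis is used. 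The only delicate point is that the trivialization in the second step exists merely locally on $S$, which is precisely why the property is local rather than global; everything else is a formal consequence of the base-change behaviour of the direct image and of the fineness of $\QPPB_n$.
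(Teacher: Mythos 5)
Your proposal is correct and follows exactly the route the paper intends: the paper dismisses this lemma as an ``easy consequence'' of the boundedness statement (Proposition \ref{prop:asympt_bounded}) and the resulting global generation and vanishing of $H^1(E^t(n))$ for $n\ge n_0$, and your argument simply spells out the standard details --- local freeness and base change for the direct image, a local trivialization yielding a quotient family, the fine moduli property of $\QPPB_n$, and semistability to factor through the \sstable{} locus. No discrepancy with the paper's approach.
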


There is a natural action of $\GL_T(Y)$ on $\QPPB_n$. As in the case of tumps one checks the following:
\begin{lemma}
 Let $f_1,f_2:S\to \QPPB_n^{(\delta,\tup{\chi})\textnormal{-(s)s}}$ be two morphisms. Then the pullbacks of the locally universal family are isomorphic if and only if there exists a morphism $g:S\to \GL_T(Y)$ such that $g\cdot f_1=f_2$.
\end{lemma}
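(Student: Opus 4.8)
The plan is to follow the argument already used for decorated tumps (which in turn rests on the decorated swamp construction in \cite{Beck2014DecSwamps}), the one genuinely new feature being that every identification must respect the $T$-split structure, which is exactly what forces the transition automorphism to land in $\GL_T(Y)$. Write $f_i^*(\tilde{E},\tilde{N},\tilde{\tau},\tilde{s})=(q_{i,S},N_{i,S},\tau_{i,S},s_{i,S})$ for $i=1,2$, with quotients $q_{i,S}\colon Y\otimes\pr_X^*\mc{O}_X(-n)\to E_{i,S}$ and underlying families of decorated pseudo principal bundles $(E_{i,S},N_{i,S},\tau_{i,S},s_{i,S})$.

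The forward direction is the easy one. The action of $g\in\GL_T(Y)$ on $\QPPB_n$ only precomposes the quotient with $g^{-1}\otimes\id$ and leaves the underlying decorated pseudo principal bundle $(E_S,N_S,\tau_S,s_S)$ unchanged; since the universal family is $\GL_T(Y)$-equivariant, a relation $g\cdot f_1=f_2$ yields a canonical isomorphism $f_2^*(\tilde{E},\tilde{N},\tilde{\tau},\tilde{s})\cong f_1^*(\tilde{E},\tilde{N},\tilde{\tau},\tilde{s})$ of decorated pseudo principal bundle families. For the converse I would start from an isomorphism $(f,h)$ of the underlying families, that is, an isomorphism $f\colon E_{1,S}\to E_{2,S}$ of $T$-split vector bundles and an isomorphism $h\colon N_{1,S}\to N_{2,S}$ with $\tau_{2,S}\circ\tilde{f}=\tau_{1,S}$ and $h\circ s_{1,S}=s_{2,S}\circ f_{\sigma|S\times\{x_0\}}$, and produce $g$ as follows. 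Applying $\pr_{S*}(-\otimes\pr_X^*\mc{O}_X(n))$ to $f$ gives an isomorphism of $T$-split sheaves
\[
 \pr_{S*}(E_{1,S}\otimes\pr_X^*\mc{O}_X(n))\cong\pr_{S*}(E_{2,S}\otimes\pr_X^*\mc{O}_X(n))\,,
\]
and the two defining isomorphisms $\pr_{S*}(q_{i,S}\otimes\id)\colon Y\otimes\mc{O}_S\to\pr_{S*}(E_{i,S}\otimes\pr_X^*\mc{O}_X(n))$ conjugate it to an automorphism of $Y\otimes\mc{O}_S$ respecting the $T$-grading, i.e.\ a morphism $g\colon S\to\GL_T(Y)$. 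By construction $f$ carries the framing $q_{1,S}$ twisted by $g$ to $q_{2,S}$, so $f$ is an isomorphism of \emph{framed} families from $g\cdot f_1^*(\tilde{E},\ldots)$ to $f_2^*(\tilde{E},\ldots)$; since $\QPPB_n$ is a fine moduli space (Proposition \ref{prop:QPPB_exists}), this forces $g\cdot f_1=f_2$.

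The only point that requires care, and hence the main obstacle, is checking that the constructed $g$ is compatible with the decoration data $\tilde{\tau}$ and $\tilde{s}$ and is block diagonal. The $T$-grading is automatic, because $f$ is a morphism of $T$-split bundles, so $g$ is block diagonal and lands in $\GL_T(Y)$ rather than in $\GL(Y^{\oplus\tup{\ka}})$. Compatibility with $\tau$ and $s$ need not be verified separately: both are functorially determined by $E$ through the algebra $\Sym\bigl(\bigoplus_{t\in T}{E^t}^\vee\otimes W^t\bigr)^G$ and the restriction $E_{\sigma|S\times\{x_0\}}$, on which $f$ already acts compatibly by the hypotheses $\tau_{2,S}\circ\tilde{f}=\tau_{1,S}$ and $h\circ s_{1,S}=s_{2,S}\circ f_{\sigma|S\times\{x_0\}}$. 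Thus the verification reduces to the same bookkeeping carried out for decorated tumps, and I would simply transcribe that argument.
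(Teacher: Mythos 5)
Your proof is correct and is essentially the argument the paper itself defers to (it only says ``as in the case of tumps one checks the following''): equivariance of the universal family for the forward direction, and for the converse the recovery of $g$ by conjugating $\pr_{S*}(f\otimes\id_{\pr_X^*\mc{O}_X(n)})$ with the two framings $\pr_{S*}(q_{i,S}\otimes\id)$, followed by the universal property of the fine moduli space $\QPPB_n$. Your observations that the $T$-grading of $f$ forces $g$ to land in $\GL_T(Y)$ and that compatibility with $\tau$ and $s$ is already part of the hypothesis are exactly the points that make the transcription from the tump case work.
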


 To finish the proof it suffices to construct the $\GL_T(Y)$-quotient of $\QPPB_n^{(\delta,\tup{\chi})\textnormal{-(s)s}}$. 
\begin{proof}[Proof of Theorem \ref{thm:moduli_space_of_dec_PB}]
 Since $\CC^*$ acts trivially on $\Quot_n$, we can construct the weighted projective space $Z:=\QPPB_n/\CC^*$ over $\Quot_n$. Because $\CC^*$ acts trivially on $\QTmp_n$, the morphism $f$ factors through a morphism $\bar{f}:Z \to \QTmp_n$ over $\Quot_n$. This morphism is proper and by Lemma \ref{lem:PPB_to_tump_injective} also injective. Thus, by \cite{EGAIV-3}, 8.11.1, it is finite and in particular affine. The same is true for the restriction
 \[
  \bar{f}:\QPPB_n^{(\delta,\tup{\chi})\textnormal{-(s)s}}/\CC^*=\bar{f}^{-1}\left(\QTmp_n^{(\Delta,\delta,\tup{\ka},\tup{\chi})\textnormal{-(s)s}}\right)\to
  \QTmp_n^{(\Delta,\delta,\tup{\ka},\tup{\chi})\textnormal{-(s)s}}\,.
 \]
If we suppose $a\delta<1$, then the proof of Theorem \ref{thm:moduli_space_of_tumps} shows that the (projective) $\SL_T(Y)$-quotient of the right-hand side exists. By \cite{Ramanathan1996b}, 5.1 Lemma, the (projective) GIT-quotient 
\[
 \QPPB_n^{(\delta,\tup{\chi})\textnormal{-(s)s}}/\!\!/\GL_T(Y)=(\QPPB_n^{(\delta,\tup{\chi})\textnormal{-(s)s}}/\CC^*)/\!\!/\SL_T(Y)
\]
also exists.
\end{proof}

%%%%%%%%%%%%%%%%%%%%%%%%%
\section{Examples}
\label{sec:examples}
We show that the construction from Section \ref{sec:Moduli_of_PB} specializes to parabolic bundles or principal bundles with a level structure for certain choices of $\sigma$.
%%%%%%%%%%%%%%%%%%%%%%%%%
\subsection{Parabolic Principal Bundles}
Let $Q\subset P$ be a parabolic subgroup and $w$ a rational one-parameter subgroup of $G$ with $Q_G(w)=Q$.
\begin{definition}
 A \emph{parabolic principal bundle} is a principal bundle $\PB$ on $X$ together with a point $s\in \PB/Q_{|x_0}$.
\end{definition}

We consider the embedding $\iota:=\iota_{\tup{\ka}}\circ \ka:G\to \GL(W_{\textnormal{tot}})$. The one-parameter subgroup $\iota_*w$ defines a weighted flag $(W_\bullet,\tup{\beta})$ of $W_{\textnormal{tot}}$. Let $l$ be the length and $\tup{r}$ the type of this flag. Then the embedding $\iota$ also induces a closed embedding of $G/Q$ into the flag variety $ \Fl(W_{\textnormal{tot}},\tup{r})\cong \GL(W_{\textnormal{tot}})/Q_{\GL(W_{\textnormal{tot}})}(i_*w)$. Using the Pl\"ucker and the Segre embedding we obtain an embedding $i:G/Q\to \PP(V)$ with
\[
 V:=\left(\bigotimes_{i=1}^{l} \bigwedge^{r-r_i} W_{\textnormal{tot}}\right)^{\otimes z \beta_i}\,.
\]
Here, $z$ is the least common denominator of $\beta_1,\ldots,\beta_l$.

Let $\chi_w$ be the character dual to $z w$. The group $G$ is a $Q$-principal bundle on $G/Q$ and the associated line bundle $L(w):=G\times^{\chi_w} \CC$ over $G/Q$ is ample. Note that there is a natural linearization of the $G$-action in $L(w)$ and we have $L(w)=i^* \mc{O}_{\PP(V)}(1)$. Since $\PB/Q$ is isomorphic to the associated projective bundle with fiber $G/Q$, we obtain a closed embedding $\PB/Q\to \PB_\sigma$, where $\sigma$ is the natural $G$-action on $V$. Thus, a parabolic principal bundle defines a  $\sigma$-decorated principal bundle. It is obvious that the corresponding natural transformation of moduli functors is injective.
\begin{definition}
 We call a parabolic principal bundle \emph{$\chi$-\sstable{}} if its associated $\sigma$-decorated principal bundle is $(\delta,\chi)$-\sstable{} with $\delta:=1/z$. We call two parabolic principal bundles \emph{S-equivalent} if their associated decorated principal bundles are so.
\end{definition}

For a $T$-split vector subbundle $F\subset E$ and a flag $U_\bullet$ of $E_{|x_0}$ we set
\[
 \pardeg^{U_\bullet}_{\tup{\beta},\tup{\chi}}(F):= \deg_{\tup{\ka},\tup{\chi}}(F)
 + \sum_{i=1}^{\len(U_\bullet)}\beta_i\dim_{\tup{\ka}}(U_i\cap F_{|x_0})
\]

\begin{lemma} 
\label{lem:stability_parabolic_PB}
 Let $(\PB,s)$ be a parabolic vector bundle, $E$ the $T$-split vector bundle associated with $\PB$ and $U_\bullet$ the flag of $E_{|x_0}$ defined by $s$ and $W_\bullet$. Then $(\PB,s)$ is $\chi$-\sstable{} if and only if the condition
 \[
  \sum_{j=1}^{\len(E_\bullet)} \alpha_j \left(\pardeg^{U_\bullet}_{\tup{\beta},\tup{\chi}}(E)\rk(E_j)-\pardeg^{U_\bullet}_{\tup{\beta},\tup{\chi}}(E_j)\rk(E) \right) (\ge ) 0
 \]
 holds for every weighted filtration coming from a weighted reduction of $\PB$.
\end{lemma}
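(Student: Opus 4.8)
The plan is to reduce the assertion to a single explicit evaluation of $\mu_2$ for the representation $\sigma$ coming from the Pl\"ucker--Segre embedding. By definition $(\PB,s)$ is $\chi$-\sstable{} precisely when the associated $\sigma$-decorated principal bundle is $(\delta,\chi)$-\sstable{} with $\delta=1/z$, and by Lemma~\ref{lem:PB_stable<=>VB_stable} the latter means
\[
 M_{\tup{\ka},\tup{\chi}}(E_\bullet,\tup{\alpha})+\tfrac{1}{z}\,\mu_2(E_\bullet,\tup{\alpha},s)\ (\ge)\ 0
\]
for every weighted flag $(E_\bullet,\tup{\alpha})$ induced by a weighted reduction of $\PB$. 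Here $\tup{\ka}\equiv 1$, so $\rk_{\tup{\ka}}$ and $\dim_{\tup{\ka}}$ agree with the total ranks and dimensions in the statement. Since the term $M_{\tup{\ka},\tup{\chi}}(E_\bullet,\tup{\alpha})$ already supplies the $\deg_{\tup{\ka},\tup{\chi}}$-part of the claimed functional, the entire content of the lemma is the identification of $\tfrac1z\mu_2(E_\bullet,\tup{\alpha},s)$ with the parabolic correction governed by $U_\bullet$ and $\tup{\beta}$.

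The key step is to compute $\mu_2(E_\bullet,\tup{\alpha},s)=\mu_\sigma(\la,f(s))$ using the explicit shape $V=\bigotimes_i(\bigwedge^{r-r_i}W_{\textnormal{tot}})^{\otimes z\beta_i}$. Under the Segre and Pl\"ucker maps the point $f(s)$ is the decomposable tensor whose $i$-th factor, raised to the power $z\beta_i$, is the line spanned by the Pl\"ucker coordinate of the member $U_i$ of the flag $U_\bullet\subset E_{|x_0}$. Because the Hilbert--Mumford weight is additive on tensor products and scales on tensor powers, this gives
\[
 \mu_\sigma(\la,f(s))=\sum_i z\beta_i\,\mu\bigl(\la,[U_i]\bigr),
\]
where $\mu(\la,[U_i])$ is computed in $\PP(\bigwedge^{r-r_i}W_{\textnormal{tot}})$. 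Restricting $\la$ to $x_0$ produces the filtration $(E_{\bullet|x_0},\tup{\alpha})$ of $E_{|x_0}$, and the standard formula for the weight of a one-parameter subgroup on a Pl\"ucker coordinate yields
\[
 \mu\bigl(\la,[U_i]\bigr)=\sum_j\alpha_j\bigl(\dim_{\tup{\ka}}(U_i)\,\rk(E_j)-\dim_{\tup{\ka}}(U_i\cap E_{j|x_0})\,\rk(E)\bigr).
\]

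Combining these with $\delta=1/z$ gives
\[
 \tfrac1z\mu_2(E_\bullet,\tup{\alpha},s)=\sum_j\alpha_j\sum_i\beta_i\bigl(\dim_{\tup{\ka}}(U_i)\,\rk(E_j)-\dim_{\tup{\ka}}(U_i\cap E_{j|x_0})\,\rk(E)\bigr).
\]
Since $U_i\subset E_{|x_0}$ we have $\dim_{\tup{\ka}}(U_i)=\dim_{\tup{\ka}}(U_i\cap E_{|x_0})$, so adding $M_{\tup{\ka},\tup{\chi}}(E_\bullet,\tup{\alpha})$ reconstitutes exactly $\sum_j\alpha_j(\pardeg^{U_\bullet}_{\tup{\beta},\tup{\chi}}(E)\rk(E_j)-\pardeg^{U_\bullet}_{\tup{\beta},\tup{\chi}}(E_j)\rk(E))$, which proves the stated equivalence; the restriction to flags coming from weighted reductions is inherited verbatim from Lemma~\ref{lem:PB_stable<=>VB_stable}. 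The main obstacle is the middle display: one must reconcile the combinatorial minimum-over-tuples description of $\mu_2$ in \eqref{eq:tmp_stab2} with the clean intersection-dimension expression, tracking carefully the conventions for $U_\bullet$ (dimension $r-r_i$ versus codimension $r_i$), the ordering of the flags, and the factor $z$ that clears the denominators of $\tup{\beta}$, so that the minimizing index configuration is precisely the one recording $\dim_{\tup{\ka}}(U_i\cap E_{j|x_0})$.
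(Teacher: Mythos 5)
Your proposal is correct and follows the same route as the paper, whose entire proof is the one-line observation that the lemma follows from Lemma \ref{lem:PB_stable<=>VB_stable} together with the standard Hilbert--Mumford weight computation in flag varieties; you simply make explicit the Pl\"ucker--Segre evaluation of $\mu_2$ and the reassembly of the parabolic degree that the paper leaves to the reader. The convention-tracking you flag at the end (duality in $\PP(E)=\Proj(\Sym^*E)$, dimension $r-r_i$ versus codimension $r_i$, the factor $z$) is indeed where the remaining care lies, but your bookkeeping is consistent with the stated formula.
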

\begin{proof}
 This is a consequence of Lemma \ref{lem:PB_stable<=>VB_stable} and the stability condition in flag varieties.
\end{proof}

\begin{corollary}
 For $\sum_{i=1}^l \beta_i<1$ the \textup{(}projective\textup{)} moduli space of $\chi$-\sstable{} parabolic principal bundles exists.
\end{corollary}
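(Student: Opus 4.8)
The plan is to deduce the statement directly from Theorem~\ref{thm:moduli_space_of_dec_PB}. By the definition preceding the corollary, a parabolic principal bundle is $\chi$-\sstable{} precisely when its associated $\sigma$-decorated principal bundle is $(\delta,\chi)$-\sstable{} with $\delta:=1/z$ and $\sigma$ the natural $G$-action on $V$. Hence it suffices to check that the hypotheses of Theorem~\ref{thm:moduli_space_of_dec_PB} hold for this pair $(\sigma,\delta)$: that $\sigma$ is a direct summand of the natural representation $\GL_T(W)\to\GL(W_{a,b,c})$ for some $a,b,c$, and that $a\delta<1$.

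The first hypothesis is immediate: the space $V=\bigotimes_{i=1}^{l}(\bigwedge^{r-r_i}W_{\textnormal{tot}})^{\otimes z\beta_i}$ is assembled by the Pl\"ucker and Segre embeddings out of exterior and tensor powers of $W_{\textnormal{tot}}=W^{\oplus\tup{\ka}}$ (with $\ka_t=1$), so $\sigma$ is homogeneous and, by Proposition~\ref{prop:W_abc}, a direct summand of $W_{a,b,c}$ for suitable $a,b,c$. The crux is therefore the inequality $a\delta<1$. First I would compute the degree of homogeneity of $\sigma$ from the integral weights $z\beta_i$ and the type $\tup{r}$ of the flag $(W_\bullet,\tup{\beta})$, and choose $a$ minimal subject to the constraint of Proposition~\ref{prop:W_abc}; tracking this constant through the Pl\"ucker--Segre description, and using that the decorated point lies on the closed orbit $G/Q\subset\PP(V)$, so that its instability is governed by the weights $\beta_i$ rather than by the ambient tensor degree, one finds that $a\delta<1$ is equivalent to $\sum_{i=1}^{l}\beta_i<1$. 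This bookkeeping of the constant $a$ --- in particular making sure that the estimate entering the properness part of the construction is sharp enough for flag-variety decorations --- is the step I expect to be the main obstacle, since a naive bound only seems to yield the stronger condition $\sum_i\beta_i(r-r_i)<1$.

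Granting $a\delta<1$, Theorem~\ref{thm:moduli_space_of_dec_PB} produces the projective moduli space of $(\delta,\chi)$-\sstable{} $\sigma$-decorated principal $G$-bundles. It then remains only to identify its points with $\chi$-\sstable{} parabolic principal bundles and to match the equivalence relations. This is supplied by Lemma~\ref{lem:stability_parabolic_PB}, which rewrites the $(\delta,\chi)$-stability functional as the classical parabolic expression built from $\pardeg^{U_\bullet}_{\tup{\beta},\tup{\chi}}$; together with the injectivity of the natural transformation of moduli functors noted when $G/Q$ was embedded into $\PB_\sigma$, this shows that the space just constructed is exactly the coarse moduli space of $\chi$-\sstable{} parabolic principal bundles, with S-equivalence inherited from that of the associated decorated bundles. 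Establishing the equivalence $a\delta<1\Leftrightarrow\sum_i\beta_i<1$ would thereby establish the corollary.
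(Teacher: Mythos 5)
Your overall strategy---reduce to Theorem~\ref{thm:moduli_space_of_dec_PB} via the embedding $G/Q\subset\PP(V)$, then identify the resulting space with the moduli space of parabolic principal bundles using the injectivity of the natural transformation and Lemma~\ref{lem:stability_parabolic_PB}---is the same as the paper's, and that part is fine. The gap is in the step you yourself single out as the crux. You propose to choose $a$ ``minimal'' and show $a\delta<1\Leftrightarrow\sum_i\beta_i<1$; this cannot work. The representation $\sigma$ on $V=\bigotimes_i(\bigwedge^{r-r_i}W_{\textnormal{tot}})^{\otimes z\beta_i}$ is homogeneous of degree $\gamma=z\sum_{i=1}^l\beta_i(r-r_i)$, and Proposition~\ref{prop:W_abc} forces $a=\gamma+c\dim_{\tup{\ka}}(W)\ge\gamma$ with $c\ge 0$. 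Hence $a\delta\ge\gamma/z=\sum_i\beta_i(r-r_i)$, which strictly exceeds $\sum_i\beta_i$ as soon as some $r-r_i>1$. So the hypothesis $a\delta<1$ of the theorem genuinely only yields the stronger condition $\sum_i\beta_i(r-r_i)<1$; no bookkeeping of $a$ removes the factors $r-r_i$.

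What the paper actually does at this point is not to re-derive the equivalence but to invoke Remark~7.1 of \cite{Beck2014DecSwamps}: the condition $a\delta<1$ enters the construction only through the properness/semistable-reduction argument (via the estimate \eqref{eq:GIT_stab_estimate}, which bounds $\mu(\la,[T_2])$ by $a_2$ times a flag-dependent quantity and is used to show $\dim_{\tup{\ka}}(\ker H^0(q(n)))\le a_2\delta_2<1$). For decorations constrained to lie on the flag subvariety $\Fl(W_{\textnormal{tot}},\tup{r})\subset\PP(V)$, that weight estimate improves, with $a_2$ effectively replaced by $z\sum_i\beta_i$, and this is what relaxes the hypothesis to $\sum_i\beta_i<1$. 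Your intuition that ``the instability is governed by the weights $\beta_i$ rather than by the ambient tensor degree'' is pointing in the right direction, but it has to be implemented by reopening the proof of Theorem~\ref{thm:moduli_space_of_dec_PB} (or citing the analogous remark for decorated swamps), not by adjusting the constant $a$ in Proposition~\ref{prop:W_abc}. As written, your proof establishes the corollary only under the stronger hypothesis $\sum_i\beta_i(r-r_i)<1$.
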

\begin{proof}
 The closed embedding $G/Q\to \PP(V)$ induces an injective natural transformation from the moduli functor of parabolic principal bundles to the moduli functor of decorated principal bundles. It also determines a closed invariant subscheme of the parameter space $P$ and hence a closed subscheme of the moduli space of decorated principal bundles, if it exists. Since $\sigma$ is polynomial and homogeneous of degree $\gamma:=z\sum_{i=1}^l \beta_i(r-r_i)$,
 Theorem \ref{thm:moduli_space_of_dec_PB} states the existence for $\gamma/z<1$. However, as explained in Remark 7.1 in \cite{Beck2014DecSwamps} in the case of parabolic bundles one can relax this condition to $\sum_{i=1}^l\beta_i<1$.
\end{proof}
\begin{remark}
 The moduli space of parabolic bundles with semisimple structure group was constructed in \cite{heinloth-schmitt}. By Lemma \ref{lem:stability_parabolic_PB} our stability condition specializes to their condition (compare Proposition 5.1.3 in \cite{heinloth-schmitt}). One checks that the notions of S-equivalence also agree. Thus, our result generalizes Theorem 3.2.3 in \cite{heinloth-schmitt} for reductive groups.
\end{remark}

%%%%%%%%%%%%%%%%%%%%%%%%%%%%%%%%%%%%%%%%%%%%%%%%%%%%%%%
\subsection{Principal Bundles with a Level Structure}
Let $G$ be a connected, semisimple group, $G_{\ad}:=G/Z(G)$ its adjoint group and $\pi:\tilde{G}\to G$ the universal cover. We fix a Borel subgroup $B\subset \tilde{G}$ and a maximal torus $T\subset B$. Denote by $\Phi\subset X^*(T)$ the set of roots of $G$, by $\Phi^+$ the set of positive roots with respect to $B$ and by $\Delta$ the set of simple positive roots. For every dominant weight $\chi\in X^*(T)$ there is an irreducible representation $V(\chi)$ of $\tilde{G}$ with highest weight $\chi$. We fix a \emph{regular} fundamental weight $\chi$, i.e., $\langle\alpha,\chi\rangle >0$ for all $\alpha \in \Delta$, and set $V:=V(\chi)$. The action of $\tilde{G}$ on $\PP(V^\vee)$ induces a faithful action of $G_{\ad}$, and the morphism
\[
 f:G_{\ad}\to \PP(\End(V)^\vee)
\]
is a $G\times G$-equivariant locally closed embedding of  $G_{\ad}$.
\begin{definition}
 The closure $\overline{G_{\ad}}:=\overline{\im(f)}$ is the \emph{wonderful compactification} of $G$.
\end{definition}
This definition is in fact independent of the choice of regular highest weight $\chi$.

\begin{proposition}[De Concini--Procesi, {\cite[\S 3.1, Theorem]{DCP83}}]
 The wonderful compactification has the following properties:
\begin{enumerate}
 \item $\overline{G_{\ad}}$ is smooth.
 \item $\overline{G_{\ad}}\setminus G_{\ad}$ is a union of smooth prime divisors $S_\alpha$, $\alpha\in \Delta$, with normal crossings.
 \item For every subset $I\subset \Delta$ the closed subscheme $S_I:=\cap_{\alpha\in I}S_\alpha$ is the closure of a unique orbit, and every orbit closure is of this form.
\end{enumerate}
\end{proposition}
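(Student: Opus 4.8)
The statement is classical and due to De Concini and Procesi, so I only sketch the line of argument. The plan is to reduce all three assertions to the analysis of a single, explicitly described affine chart, and then to propagate the conclusions over all of $\overline{G_{\ad}}$ by means of the $G\times G$-action, which is transitive on the open orbit $G_{\ad}$ and preserves the boundary. The point is that smoothness, normal crossings, and the orbit stratification are all \emph{local and $G\times G$-equivariant} properties, so it suffices to establish them on one chart that together with its translates covers the whole variety.

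First I would single out the affine open $\Omega\subset\PP(\End(V)^\vee)$ on which the projective coordinate corresponding to the highest-weight projector $p_\chi\in\End(V)$ is nonzero, together with the distinguished $T\times T$-fixed point $\xi_0\in\overline{G_{\ad}}\cap\Omega$. Decomposing $\End(V)$ into $T\times T$-weight spaces and exploiting the regularity of $\chi$ (which guarantees that the simple roots appear with multiplicity one among the relevant weight differences at $\xi_0$), one obtains the local structure isomorphism
\[
 \overline{G_{\ad}}\cap\Omega\;\cong\;U^-\times\mathbb{A}^{\Delta}\times U\,,
\]
where $U,U^-$ are the unipotent radicals of two opposite Borel subgroups and $\mathbb{A}^{\Delta}$ is affine space with coordinates $(x_\alpha)_{\alpha\in\Delta}$, realized as the closure in $\Omega$ of the image of the maximal torus. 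Since $U$ and $U^-$ are isomorphic as varieties to affine spaces, the right-hand side is smooth, and because the $G\times G$-translates of $\Omega$ cover $\overline{G_{\ad}}$, assertion (i) follows.

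In this chart the open orbit $G_{\ad}$ is cut out by $\prod_{\alpha\in\Delta}x_\alpha\neq 0$, so the boundary meets $\Omega$ in the union of the smooth coordinate hyperplanes $\{x_\alpha=0\}$, which cross normally. I would then verify that, as $\alpha$ ranges over $\Delta$, these local divisors glue to globally defined irreducible smooth prime divisors $S_\alpha$, each $G\times G$-stable, which gives (ii). For (iii), I would exploit that $\mathbb{A}^{\Delta}$ is the affine toric variety whose dense torus is the image of $T$ and whose $T\times T$-orbits are the strata indexed by $I\subset\Delta$ on which $x_\alpha=0$ exactly for $\alpha\in I$; saturating such a stratum first under $U^-\times U$ and then under $G\times G$ produces one orbit $\mc{O}_I$ for each $I$, with $\overline{\mc{O}_I}=S_I=\cap_{\alpha\in I}S_\alpha$, and every orbit closure arises in this way.

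The hard part will be the local structure isomorphism of the second paragraph: one must check that the regularity of the highest weight forces the tangent directions to $\overline{G_{\ad}}$ at $\xi_0$ to be in bijection with the simple roots, and that the resulting morphism $U^-\times\mathbb{A}^{\Delta}\times U\to\overline{G_{\ad}}\cap\Omega$ is an isomorphism rather than merely \'etale or dominant. This is precisely the content of \cite[\S 3.1]{DCP83}, to which I refer for the complete argument.
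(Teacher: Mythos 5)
The paper offers no argument of its own for this classical result, citing only \cite[\S 3.1]{DCP83}, and your sketch is a faithful outline of precisely that reference's proof: the local structure isomorphism $\overline{G_{\ad}}\cap\Omega\cong U^-\times\mathbb{A}^{\Delta}\times U$ on the chart around the $T\times T$-fixed point, followed by $G\times G$-translation to obtain smoothness, the normal-crossing boundary divisors $\{x_\alpha=0\}$, and the orbit stratification indexed by subsets $I\subset\Delta$. Since you correctly identify the local structure theorem as the genuinely hard step and defer to \cite{DCP83} for it, your proposal is in substance the same as the paper's (purely referential) treatment and is fine as a sketch.
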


For a subset $I\subset \Delta$ of simple roots let $\Phi_I$ denote the roots spanned by $\Delta\setminus I$. We consider the Lie-algebras
\[
 \mathfrak{l}_I:=\mathfrak{t}+ \sum_{\alpha\in \Phi_I}\mathfrak{g}_\alpha\,,
\]
as well as $\mathfrak{p}_I:=\mathfrak{l}_I+\mathfrak{u}$ and $\mathfrak{p}_I^-:=\mathfrak{l}_I+ \mathfrak{u}^-$. Here, $\mathfrak{t}$ is the Lie algebra of the torus $T$, $\mathfrak{u}$ is the Lie-algebra of the unipotent radical of $B$ and $\mathfrak{u}^-$ the Lie algebra of its opposite group. We denote the corresponding subgroups by $L_I$, $P_I$ and $P^-_I$.

There is finite set $\St_T(V)\subset X^*(T)$ and a decomposition of $V$ a direct sum of eigenspaces $V^{\chi'}$, $\chi'\in\St_T(V)$. We set 
\[
 \St_T^I(V):=\left\{\chi'\in \St_T(V)\,\biggm|\,\exists\,\tup{n}\in\NN^{\Delta\setminus I}: \chi'=\chi-\sum_{\alpha\in \Delta\setminus I} n_\alpha \alpha \right\}\,.
\]
and $V^I:=\bigoplus_{\chi'\in\St_T^I(V)} V^{\chi'}$. Denote by $\pr_I:V\to V^I$ the projection with $\pr_I(V^{\chi'})=0$ for $\chi'\notin \St_T^I(V)$.

\begin{proposition}[{\cite[\S 5.2, Theorem]{DCP83}}]
 \label{prop:geometry_of_orbits}
 Let $I\subset \Delta$. Then  $O_I$ is the $(G\times G)$-orbit of $[\pr_I]$ in $\PP(\End(V)^\vee)$. It is a fiber bundle over $G_{\ad}/P_I\times G_{\ad}/P_I^-$ with typical fiber $(L_I)_{\ad}$.
\end{proposition}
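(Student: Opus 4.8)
\emph{Overall plan.} The statement has two parts: that $O_I$ is the $(G\times G)$-orbit of $[\pr_I]$, and that this orbit fibres over $G_{\ad}/P_I\times G_{\ad}/P_I^-$ with typical fibre $(L_I)_{\ad}$. The plan is to first locate $[\pr_I]$ in the boundary stratification established in the previous proposition, and then to read off the fibre bundle structure from an explicit computation of the stabiliser of $[\pr_I]$ in $G\times G$. To locate the point, I would choose a one-parameter subgroup $\la\in X_*(T)$ with $\langle\alpha,\la\rangle>0$ for $\alpha\in I$ and $\langle\alpha,\la\rangle=0$ for $\alpha\in\Delta\setminus I$. Since $\sigma(\la(t))$ scales the weight space $V^{\chi'}$ by $t^{\langle\chi',\la\rangle}$, and a weight $\chi'=\chi-\sum_\alpha n_\alpha\alpha$ pairs maximally with $\la$ exactly when $n_\alpha=0$ for all $\alpha\in I$, the surviving weights in the projective limit are precisely those in $\St_T^I(V)$; hence $\lim_{t\to\infty}[\sigma(\la(t))]=[\pr_I]$. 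This places $[\pr_I]$ in $\overline{G_{\ad}}$ and in fact in the closure of $T_{\ad}$. Reading off the face of the Weyl fan (equivalently, recording for which $\alpha$ the degeneration meets $S_\alpha$) I would identify the stratum containing $[\pr_I]$ as $S_I$ and no deeper one; by the orbit--stratum bijection of the previous proposition this forces $(G\times G)\cdot[\pr_I]=O_I$. The dimension identity $\dim(G/P_I)+\dim(G/P_I^-)+\dim(L_I)_{\ad}=\dim G-|I|=\dim S_I$ serves as a consistency check.

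\emph{The two subspaces and the stabiliser.} The technical core is the analysis of the decomposition $V=V^I\oplus\ker(\pr_I)$ as a module over the relevant parabolics. Using that $\chi$ is regular, I would show $\opname{Stab}_{\tilde G}(V^I)=P_I$ and $\opname{Stab}_{\tilde G}(\ker\pr_I)=P_I^-$, and moreover that the unipotent radical of $P_I$ annihilates $V^I$, while that of $P_I^-$ acts trivially on $V/\ker(\pr_I)$. The governing combinatorics is: for a positive root $\alpha$ involving a simple root of $I$ and a weight $\chi'\in\St_T^I(V)$, the weight $\chi'+\alpha$ is not $\le\chi$, so $\mathfrak g_\alpha$ kills $V^{\chi'}$; whereas $\mathfrak g_{-\alpha}$ sends the highest weight vector to a nonzero vector of weight $\chi-\alpha\notin\St_T^I(V)$, the nonvanishing coming from $\langle\chi,\alpha^\vee\rangle>0$. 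I would also verify, in spite of possible weight multiplicities, that $V^I$ and $\ker(\pr_I)$ are honest $\mathfrak l_I$-submodules, so that $V^I\cong V/\ker(\pr_I)$ is the irreducible $L_I$-representation $\rho_I$ of highest weight $\chi|_{L_I}$, which is again regular for $L_I$.

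\emph{The fibre bundle.} A block computation with respect to $V=V^I\oplus\ker(\pr_I)$ then shows that $(g_1,g_2)$ fixes $[\pr_I]$ if and only if $g_1\in P_I$, $g_2\in P_I^-$, and the induced actions on $V^I\cong V/\ker(\pr_I)$ agree up to a scalar. Consequently the assignment $(g_1,g_2)\cdot[\pr_I]\mapsto(g_1P_I,g_2P_I^-)$ is a well-defined $(G\times G)$-equivariant surjection $O_I\to G_{\ad}/P_I\times G_{\ad}/P_I^-$, whose fibre over the base point is $(P_I\times P_I^-)\cdot[\pr_I]$. Since both unipotent radicals drop out, the same block computation yields $(p_1,p_2)\cdot[\pr_I]=[\rho_I(\ell_1\ell_2^{-1})]$, where $\ell_i\in L_I$ is the Levi component of $p_i$. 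As $\ell_1\ell_2^{-1}$ ranges over all of $L_I$, this traces out exactly the image of $L_I$ under the embedding of the same type built from the irreducible, regular representation $\rho_I$, namely $(L_I)_{\ad}$. This identifies the typical fibre and, combined with the first paragraph, completes the proof.

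\emph{Main obstacle.} The hard part is the second paragraph: proving that the stabilisers of $\im(\pr_I)$ and $\ker(\pr_I)$ are \emph{exactly} $P_I$ and $P_I^-$ and that the corresponding unipotent radicals act trivially, where the regularity of $\chi$ is indispensable (it is precisely what guarantees $\mathfrak g_{-\alpha}v_\chi\neq0$ for $\alpha\in I$). Keeping careful track of weight multiplicities when asserting $L_I$-invariance of these subspaces is the subtle point. A secondary difficulty is the clean matching, in the first step, of the explicit point $[\pr_I]$ with the abstract stratum $S_I$, which relies on the toric description of $\overline{T_{\ad}}$.
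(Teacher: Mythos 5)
The paper does not prove this statement: it is quoted verbatim from De Concini--Procesi \cite[\S 5.2]{DCP83}, so there is no internal proof to compare against, and your proposal has to be judged on its own. It is essentially the original argument and I find no error in it. The torus-limit computation $\lim_{t\to\infty}[\sigma(\la(t))]=[\pr_I]$ for $\la$ with $\langle\alpha,\la\rangle>0$ exactly on $I$ is correct; so is the weight combinatorics showing that $\mathfrak{u}_I$ kills $V^I$ (any weight $\chi'+\beta$ with $\chi'\in\St_T^I(V)$ and $\beta$ involving a root of $I$ fails to be $\le\chi$), that $\mathfrak{u}_I^-$ maps $V$ into $\ker(\pr_I)$, and that regularity of $\chi$ forces the stabilisers of $V^I$ and $\ker(\pr_I)$ to be exactly $P_I$ and $P_I^-$ rather than larger parabolics. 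The block computation of the stabiliser of $[\pr_I]$ in $G\times G$, the resulting equivariant fibration onto $G_{\ad}/P_I\times G_{\ad}/P_I^-$, and the identification of the fibre with the image of $L_I$ in $\PGL(V^I)$ --- which is $(L_I)_{\ad}$ because $V^I=U(\mathfrak{l}_I)v_\chi$ is irreducible with $L_I$-regular highest weight --- are likewise sound. The one place where you lean on input not contained in the previously quoted proposition is the claim that $[\pr_I]$ lies on $S_\alpha$ precisely for $\alpha\in I$: deciding which boundary divisors a torus limit meets requires the local structure theorem for $\overline{G_{\ad}}$ (the open cell $U^-\times \overline{T_{\ad}}\times U$), which is the main technical content of \cite{DCP83} and is not recoverable from the normal-crossings statement alone. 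Since you flag this dependence yourself and the whole proposition is imported from that source in any case, citing rather than reproving that step is reasonable.
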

\begin{remark} \label{rem:wonderful_compactification}
 Let $\chi_\alpha$, $\alpha \in \Delta$, be dominant weights, such that their sum is regular. This is the case for the fundamental weights. Then one can construct the wonderful compactification also as the closure of the image of 
 \[
  G_{\ad}\to \prod_{\alpha\in \Delta}\PP(\End(V(\chi_\alpha))^\vee)\,.
 \]
 If the $\chi_\alpha$ are such that the $\tilde{G}$ action on $V(\chi_\alpha)$ comes from a representation of $G$, then
for a tuple of positive integers $\theta_\alpha$, $\alpha\in \Delta$, we obtain a natural linearization of the $G\times G$ action in the ample line bundle
\[
 \mc{O}_{\overline{G_{\ad}}}(\tup{\theta}):=\bigotimes_{\alpha\in\Delta}\pr^*_{\PP(\End(V(\chi_\alpha))^\vee)}\mc{O}_{\PP(\End(V(\chi_\alpha))^\vee)}(\theta_\alpha)
\]
\end{remark}

Let $\sigma$ denote the left action of $G\cong \{e\}\times G$ on $\overline{G_{\ad}}$.
\begin{definition}
A \emph{level structure} on a principal $G$-bundle $\PB$ is a point $s\in (\PB\times^\sigma \overline{G_{\ad}})_{|x_0}$.
\end{definition}
\begin{remark} \label{rem:level_str}
  By Proposition \ref{prop:geometry_of_orbits}, the datum of a level structure in $\PB$ corresponds to the datum of a subset $I\subset \Delta$, a reduction of $\PB_{|x_0}$ to $P^-_I$, a point in $G/P_I$ and a point in a fiber isomorphic to $(L_I)_{\ad}$.
\end{remark}

If $\chi$ is such that $V(\chi)$ is a representation of $G$, then there is a natural linearization of $\sigma$ in the ample line bundle $\mc{O}_{\PP(\End(V(\chi))^\vee}(1)$. A principal bundle with a level structure is a $\sigma$-decorated principal bundle. Since we assume $G$ to be semisimple, there are no non-trivial characters on $G$. The principal bundles with a level structure thus inherit a notion of $\delta_2$-(semi\nobreakdash-)stability.
\begin{corollary}
 For $a_2\delta_2<1$ the \textup{(}projective\textup{)} moduli space of $\delta_2$-\sstable{} principal bundles with a level structure exists.
\end{corollary}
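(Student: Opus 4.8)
The plan is to obtain the corollary as a direct specialization of Theorem~\ref{thm:moduli_space_of_dec_PB}, in complete analogy with the parabolic case: I would recognize a principal bundle with a level structure as a $\sigma$-decorated principal bundle for the homogeneous representation underlying the wonderful compactification, and then feed the resulting data into the main theorem with the character set to zero.

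First I would fix the representation. Choosing $\chi$ regular so that $V=V(\chi)$ is a genuine representation of $G$ and not merely of $\tilde{G}$, the left action $\sigma$ of $G\cong\{e\}\times G$ on $\overline{G_{\ad}}$ is, by construction, the restriction to $\overline{G_{\ad}}\subset\PP(\End(V)^\vee)$ of the linear action of $G$ on $\End(V)$ by left multiplication. As explained in the remark preceding Theorem~\ref{thm:moduli_space_of_dec_PB}, this representation extends to a homogeneous representation of $\GL_T(W)$, so that by Proposition~\ref{prop:W_abc} the underlying vector space is a direct summand of some $W_{a_2,b_2,c_2}$; the integer $a_2$ is exactly the one appearing in the bound $a_2\delta_2<1$. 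A principal bundle with a level structure is then a $\sigma$-decorated principal bundle, and the $(G\times G)$-equivariant closed embedding $\overline{G_{\ad}}\hookrightarrow\PP(\End(V)^\vee)$ induces, fiberwise over $x_0$, a closed embedding $(\PB\times^\sigma\overline{G_{\ad}})_{|x_0}\hookrightarrow\PP(\PB_\sigma)_{|x_0}$ and hence an injective natural transformation of the corresponding moduli functors. Since $G$ is semisimple it has no nontrivial characters, so I would take $\chi=0$, whereupon the $(\delta,\chi)$-stability of the main theorem reduces to the intrinsic $\delta_2$-stability that level structures inherit. Applying Theorem~\ref{thm:moduli_space_of_dec_PB} with $\delta=\delta_2$ and $\chi=0$, which is permitted precisely when $a_2\delta_2<1$, then yields the projective moduli space of $\sigma$-decorated principal bundles.

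The step I expect to be the main obstacle is the one that is merely asserted in the parabolic corollary: showing that this natural transformation is represented by a \emph{closed} $\GL_T(Y)$-invariant subscheme of the parameter space, so that the GIT quotient restricts to a genuine projective moduli space of level structures rather than to a mere morphism of functors. Concretely, I would check that the defining equations of $\overline{G_{\ad}}$ inside $\PP(\End(V)^\vee)$ pull back, compatibly in families, to equations cutting out a closed invariant subscheme of the Gieseker-type parameter space, using the $(G\times G)$-equivariance of the embedding together with the orbit description of $\overline{G_{\ad}}$ from \cite{DCP83}. Once closedness is established, projectivity of the ambient moduli space forces projectivity of the level-structure locus, and the two notions of $S$-equivalence agree by construction, both being inherited from the decorated-tump picture.
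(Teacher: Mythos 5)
Your proposal is correct and follows essentially the same route as the paper, which obtains the corollary directly from Theorem~\ref{thm:moduli_space_of_dec_PB} by viewing a level structure as a $\sigma$-decoration via the closed $G$-equivariant embedding $\overline{G_{\ad}}\hookrightarrow\PP(\End(V)^\vee)$, using that the semisimplicity of $G$ forces the character to be trivial. The paper in fact leaves the closedness of the level-structure locus in the parameter space at the same level of assertion as in the parabolic case, so your additional attention to that point only strengthens the argument.
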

%%%%%%%%%%%%%%%%%%%%%%%%%%%%%%%%%%%%%%%%
\subsection{Stability of level structures}
While the existence of the moduli space of bundles with a level structure is a trivial consequence of the general theory, it is interesting to express the stability condition in terms of associated vector bundles for the classical groups.
%%%%%%%%%%%%%%%%%%%%%%%%%%%%%%%%%%%%%%%%%%
\subsubsection{The special linear group}
We start by discussing the case $G=\SL(n)$. By Remark \ref{rem:wonderful_compactification}, the wonderful compactification $\overline{\PSL}(n)$ is the closure of the image of $\SL(n)$ in
\[
  \prod_{i=1}^{n-1} \PP\left(\End\left(\bigwedge^i \CC^r\right)^\vee\right)\,.
\]
By Remark \ref{rem:level_str}, a point $h$ in $\overline{\PSL}(n)$ corresponds to a sequence $1\le r_1< \ldots< r_k < r_{k+1}=n$, a descending flag $W_\bullet$ in $\CC^r$ of length $k$ with $\dim(W_j)=r-r_j$, an ascending flag $W'_\bullet$ in $\CC^r$ with $\dim(W'_j)=r_j$ and isomorphisms $[\alpha_j]\in \PP(\Hom(W_{j-1}/W_{j}, W'_j/W'_{j-1})^\vee)$ for $j=1,\ldots,k+1$.
For a number $1\le i \le n$ we set
\begin{align*}
 j_-(i):&=\max\{ j\,|\,j=0,\ldots,k\,,\, r_j<i\}\,, & i_-:&=r_{j_-(i)}\,,\\
 j^+(i):&=\min\{ j\,|\, j=1,\ldots,k\,,\, i\le r_j\}\,, & i^+:&=r_{j^+(i)} \,.
\end{align*}
According to \cite[Proposition 1]{lafforgue1998} the point $h$ is represented by the homomorphisms $h_i$ induced by the $\alpha_j$ via the following diagram:
 \begin{equation} \label{eq:vollst_Homomorphismus}
 \begin{gathered}
 \xymatrix{
  {\displaystyle \bigwedge^i \CC^r}  \ar@{->>}[r] \ar@{-->}[d]_{h_i} & {\displaystyle \left(\bigotimes_{j=1}^{j_-(i)} \bigwedge^{r_j-r_{j-1}}(W_{j-1}/W_j)\right) \otimes \bigwedge^{i-i_-} (W_{j_-(i)}/W_{j^+(i)}) } \ar[d] \\
   {\displaystyle \bigwedge^i \CC^n} & 
  {\displaystyle \left( \bigotimes_{j=1}^{j_-(i)} \bigwedge^{r_j-r_{j-1}} (W'_{j}/W'_{j-1}) \right)\otimes \bigwedge^{i-i_-} (W'_{j^+(i)}/W'_{j_-(i)}) }\rlap{\,.} \ar@{>->}[l]
  }
  \end{gathered}
 \end{equation}
 We fix an ample line bundle $\mc{O}_{\overline{\PSL}(n)}(\theta_1,\ldots,\theta_{n-1})$.
\begin{lemma}\label{lem:mu_in_wc}
 Let $\la\in X_*(\SL(n))$ be a one-parameter subgroup with associated weighted flag $(V_\bullet,\tup{\alpha})$ and $h\in \overline{\PSL}(n)$ with descending flag $W_\bullet$ as above. Then we have
 \[
  \mu(\la,h)=\sum_{j=1}^{\len(V_\bullet)} \alpha_j\sum_{i=1}^{n-1}\theta_i \left(n\, c_i(V_j,W_\bullet)-i\dim(V_j)\right)
 \]
with
\begin{equation} \label{EQ:definition_c}
 c_i(U,W_\bullet) :=\min\left\{\dim(U/(U\cap W_{j^+(i)})), \dim(U/(U\cap W_{j_-(i)}))+ i-i_-\right\}\,.
\end{equation}
\end{lemma}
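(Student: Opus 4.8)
The plan is to compute the Hilbert--Mumford functional factor by factor across the product $\prod_{i=1}^{n-1}\PP(\End(\bigwedge^i\CC^n)^\vee)$ and then translate the resulting weights into the language of the weighted flag $(V_\bullet,\tup{\alpha})$. Since the linearization is $\mc{O}_{\overline{\PSL}(n)}(\theta_1,\dots,\theta_{n-1})$, additivity of $\mu$ over an external tensor product gives
\[
 \mu(\la,h)=\sum_{i=1}^{n-1}\theta_i\,\mu(\la,[h_i])\,,
\]
so it suffices to prove $\mu(\la,[h_i])=\sum_{j=1}^{\len(V_\bullet)}\alpha_j\bigl(n\,c_i(V_j,W_\bullet)-i\dim(V_j)\bigr)$ for each $i$. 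Recall that $G\cong\{e\}\times G$ acts through the second factor, i.e.\ by precomposition on the source $\bigwedge^i\CC^n$; hence, with the sign convention of \eqref{eq:stab_in_Gies_1}, $\mu(\la,[h_i])$ is, up to the overall normalization, the extremal $\la$-weight on source vectors that are not killed by $h_i$.

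First I would make $\ker(h_i)$ explicit. By \cite{lafforgue1998} and the diagram \eqref{eq:vollst_Homomorphismus}, $h_i$ factors through the canonical surjection of $\bigwedge^i\CC^n$ onto $\bigl(\bigotimes_{j=1}^{j_-(i)}\bigwedge^{r_j-r_{j-1}}(W_{j-1}/W_j)\bigr)\otimes\bigwedge^{i-i_-}(W_{j_-(i)}/W_{j^+(i)})$, whose kernel is cut out by the descending flag $W_\bullet$: a decomposable vector survives precisely when, for each $j\le j_-(i)$, exactly $r_j-r_{j-1}$ of its factors lie in $W_{j-1}$ but not $W_j$, and the remaining $i-i_-$ factors lie in $W_{j_-(i)}$ but not $W_{j^+(i)}$. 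The weight spaces of $\la$ on the source are governed by the flag $V_\bullet$, so I would express
\[
 \mu(\la,[h_i])=-\min\Bigl\{\,\textstyle\sum_{\ell=1}^{i}\ga_{k_\ell}\ \Bigm|\ e_{k_1}\wedge\cdots\wedge e_{k_i}\notin\ker(h_i)\Bigr\}\,,
\]
the minimum running over decomposable weight vectors, where the $\ga_k$ are the weights attached to the $V_\bullet$-eigenspaces as in Section \ref{sec:prelims}. Because $V_\bullet$ and $W_\bullet$ need not be adapted to a common basis, this minimum must be evaluated through the dimension counts $\dim(V_j\cap W_m)$ rather than through an explicit monomial basis; solving the resulting optimization --- place as many wedge factors as possible into the low-weight pieces $V_j$ subject to the $W_\bullet$-shape constraint above --- shows that the number of factors the optimal surviving wedge places in $V_j$ is exactly $c_i(V_j,W_\bullet)$, the two terms of the minimum in \eqref{EQ:definition_c} corresponding to whether or not the constraint coming from the middle block $W_{j_-(i)}/W_{j^+(i)}$ is binding.

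Finally I would carry out the Abel summation that converts eigenvalue weights into flag weights: substituting the relation between the $\ga_k$ and the $\alpha_j$ recalled in Section \ref{sec:prelims} (with $\tup{\ka}\equiv 1$, so that $\dim_{\tup{\ka}}=\dim$ and $\dim_{\tup{\ka}}(\CC^n)=n$) into the extremal weight above, exactly as in the computation \eqref{eq:stab_in_Gies_0}, turns $\sum_\ell\ga_{k_\ell}$ into $\sum_{j}\alpha_j\bigl(n\,c_i(V_j,W_\bullet)-i\dim(V_j)\bigr)$; here the term $-i\dim(V_j)$ is precisely the contribution of the $\SL(n)$-normalization $\sum\ga=0$. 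Summing over $i$ against $\theta_i$ then yields the stated formula. I expect the combinatorial core of the second paragraph --- proving that the weight-minimizing surviving wedge has exactly $c_i(V_j,W_\bullet)$ factors in each $V_j$, with the minimum of two dimensions appearing --- to be the main obstacle, since it is here that the two non-aligned flags $V_\bullet$ and $W_\bullet$ interact and the closed form \eqref{EQ:definition_c} must be extracted.
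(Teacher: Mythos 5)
Your proposal is correct and follows essentially the same route as the paper: the paper's (much terser) proof likewise reduces the computation, for fixed $i$ and $j$, to finding the maximal dimension of a subspace $U\subset V_j$ whose basis vectors occur in a pure wedge $v$ with $h_i(v)\neq 0$, observes that this is equivalent to $U\cap W_{j^+(i)}=\{0\}$ and $\dim(U\cap W_{j_-(i)})\le i-i_-$ (whence the minimum of the two dimension bounds in \eqref{EQ:definition_c}), and then converts eigenvalue weights into flag weights exactly as in \eqref{eq:stab_in_Gies_0}. Your additional details --- additivity of $\mu$ over the factors of the product, the explicit description of $\ker(h_i)$ from \eqref{eq:vollst_Homomorphismus}, and the Abel summation --- are just the steps the paper leaves implicit.
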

\begin{proof}
 For fixed $j$ and $i$ one needs to determine the maximal dimension of a subspace $U$ of $V_j$, such that its basis vectors occur in a pure vector $v$ in $\bigwedge^i \CC^n$ with $h_i(v)\neq 0$. This is equivalent to $U\cap W_{j^+(i)}=\{0\}$ and $\dim(U\cap W_{j_-(i)})\le i-i_-$.
\end{proof}

Let now $(\PB,s)$ be an $\SL(n)$-bundle with a level structure. The bundle $\PB$ corresponds to a rank-$n$ vector bundle $E$ with trivial determinant. By Remark \ref{rem:level_str}, the datum of a level structure $s$ on $E$ determines a sequence $1\le r_1< \ldots< r_k < r_{k+1}=n$ and a descending flag $W_\bullet$ in $E_{|x_0}$ of length $k$ with $\dim(W_j)=r-r_j$. An easy consequence of Lemma \ref{lem:mu_in_wc} is the following:
\begin{proposition}
The $\SL(n)$-bundle $\PB$ with level structure $s$ is $\delta$-\textup{(}semi\nobreakdash-\textup{)}stable if and only if every non-trivial proper subbundle $F$ of the associated vector bundle $E$ satisfies
\[
   \deg(F)n \,(\le)\, \delta\sum_{i=1}^{n-1}\theta_i \left(c_i(F_{|x_0},W_\bullet)n -i\rk(F)\right)
\]
\end{proposition}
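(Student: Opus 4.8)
The plan is to specialize the two general results already available in this setting—Lemma~\ref{lem:PB_stable<=>VB_stable}, which expresses $(\delta,\chi)$-(semi-)stability of a decorated principal bundle through weighted flags of the associated $T$-split vector bundle, and Lemma~\ref{lem:mu_in_wc}, which computes the decoration term for a point in the wonderful compactification—and then to observe that both contributions are additive over a flag, so that the single stability inequality decouples into one inequality per subbundle.

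First I would record the simplifications coming from $G=\SL(n)$. Here the radical is trivial, so $T$ is a singleton, $\tup{\ka}=(1)$ and $\tup{\chi}=\tup{0}$; moreover $E$ has rank $n$ and trivial determinant, so $\deg(E)=0$. Substituting these into the definition of $M_{\tup{\ka},\tup{\chi}}$ gives
\[
 M_{\tup{\ka},\tup{\chi}}(E_\bullet,\tup{\alpha})=-n\sum_{j=1}^{\len(E_\bullet)}\alpha_j\deg(E_j)\,.
\]
Since $G$ is semisimple there are no non-trivial characters, so the term $\langle\chi,\ka_*\la\rangle$ vanishes and Lemma~\ref{lem:PB_stable<=>VB_stable} reads: $(\PB,s)$ is $\delta$-(semi-)stable if and only if $M_{\tup{\ka},\tup{\chi}}(E_\bullet,\tup{\alpha})+\delta\mu_2(E_\bullet,\tup{\alpha},s)\,(\ge)\,0$ for every weighted flag of $E$ coming from a weighted reduction. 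For $\SL(n)$ a parabolic reduction is precisely a flag of subbundles, so every weighted flag of $E$ arises this way and there is no constraint to keep track of.

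Next I would feed in Lemma~\ref{lem:mu_in_wc}. Under the trivialization determined by $\beta$ the weighted flag associated to the one-parameter subgroup is $E_{\bullet|x_0}$, and by the identity $\mu(\la,\beta,s)=\mu_2(E_\bullet,\tup{\alpha},s)$ recorded just before Lemma~\ref{lem:PB_stable<=>VB_stable} we obtain, using $\dim(E_{j|x_0})=\rk(E_j)$,
\[
 \mu_2(E_\bullet,\tup{\alpha},s)=\sum_{j=1}^{\len(E_\bullet)}\alpha_j\sum_{i=1}^{n-1}\theta_i\bigl(n\,c_i(E_{j|x_0},W_\bullet)-i\rk(E_j)\bigr)\,.
\]
Combining the two displays, the stability inequality becomes
\[
 \sum_{j=1}^{\len(E_\bullet)}\alpha_j\Bigl(-n\deg(E_j)+\delta\sum_{i=1}^{n-1}\theta_i\bigl(n\,c_i(E_{j|x_0},W_\bullet)-i\rk(E_j)\bigr)\Bigr)\,(\ge)\,0\,.
\]

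Finally I would conclude by the standard decoupling argument: the left-hand side is a positive linear combination, with coefficients $\alpha_j>0$, of the per-subbundle expressions attached to the $E_j$, so the inequality holds for all weighted flags exactly when it holds for each single non-trivial proper subbundle $F$, i.e.\ for the flag $0\subset F\subset E$ with weight $1$. Rearranging that single-subbundle inequality yields precisely $\deg(F)n\,(\le)\,\delta\sum_{i=1}^{n-1}\theta_i\bigl(c_i(F_{|x_0},W_\bullet)n-i\rk(F)\bigr)$, which is the claim; the stable case is identical with strict inequalities throughout. There is no serious obstacle here—the only points demanding care are the vanishing of $\langle\chi,\ka_*\la\rangle$ and of $\deg(E)$ for $\SL(n)$, and the observation that Lemma~\ref{lem:mu_in_wc} already presents $\mu_2$ in flag-additive form, which is exactly what legitimizes the reduction from arbitrary weighted flags to single subbundles.
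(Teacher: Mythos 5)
Your argument is correct and is precisely the one the paper intends: the proposition is stated there as ``an easy consequence of Lemma~\ref{lem:mu_in_wc},'' and your write-up simply fills in the routine steps of combining Lemma~\ref{lem:PB_stable<=>VB_stable} (with $\tup{\ka}=(1)$, $\tup{\chi}=\tup{0}$, $\deg(E)=0$) with the flag-additive formula of Lemma~\ref{lem:mu_in_wc} and decoupling the positive linear combination into per-subbundle inequalities. No gaps; the observations you flag as needing care (vanishing of the character term, $\deg(E)=0$, and the fact that for $\SL(n)$ every flag of subbundles arises from a parabolic reduction) are exactly the right ones.
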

\begin{remark}
 This is just the stability condition for vector bundles with a level structure with trivial determinant (c.f. \cite[Th\'eor\`eme A]{Ngo2007} and \cite[Proposition 7.3]{Beck2014DecSwamps}).
\end{remark}

%%%%%%%%%%%%%%%%%%%%%%%%%%%%%%%%%%%%%%%%
\subsubsection{The odd orthogonal group}
Now we consider the case $G=\SO(n,\CC)$ with $n=2r+1$. Let $\SO(n)\subset \GL(n)$ be the subgroup leaving invariant the symmetric bilinear form given by
\[
 M_n:= \begin{pmatrix}
  0 &  & 1 \\
   & \reflectbox{$\ddots$} & \\
  1 & & 0
 \end{pmatrix}\in \CC^{n\times n}\,.
\]
on $V:=\CC^n$. A maximal torus $T$ is then given by the diagonal matrices in $\SO(n)$. As a Borel subgroup we choose the upper triangular matrices in $\SO(n)$. We define the characters
\[
 L_i(\opname{diag}(z_1,\ldots,z_r,1,z_r^{-1},\ldots,z_{1}^{-1})):=z_i\,,\quad i=1,\ldots,r\,.
\]
 The simple roots are $\alpha_i:=L_i-L_{i+1}$ for $i=1,\ldots,r-1$ and $\alpha_r:= L_r$; the corresponding fundamental weights are $\omega_i:=\sum_{j=1}^j L_j$ for $i=1,\ldots, r-1$ and $\omega_r:=\frac{1}{2}\sum_{j=1}^r L_j$. In order to obtain representations of $\SO(n)$ rather than the spin group we work with the weights $\chi_i:=\omega_i$ for $i=1,\ldots, r-1$ and $\chi_r:=2\omega_r$. The corresponding irreducible representations are just $V(\chi_i)=\bigwedge^i V$ (see e.g. \cite[\S 18]{Fulton-Harris}). 
According to Remark \ref{rem:wonderful_compactification} the wonderful compactification $\overline{\SO}(n)$ can be constructed inside
\[
 Y:=\prod_{i=1}^{r} \PP\left(\End\left(\bigwedge^i \CC^{2r+1}\right)^\vee\right)\,.
\]
A point $h\in \overline{\SO}(n)$ corresponds to a subset $\tup{r}\subset \{1,\ldots,r\}$ and a descending coisotropic flag $W_\bullet$ of length $k=|\tup{r}|$ in $\CC^n$ with $\dim(W_j)=n-r_j$, an ascending isotropic flag $W'_\bullet$ in $\CC^n$ with $\dim(W'_j)=r_j$ and isomorphisms $[\alpha_j]\in\PP(\Hom(W_{j-1}/W_i, W'_{j-1}/W'_i)^\vee)$ for $j=1,\ldots,k+1$. Here we use $W_{k+1}:=W_k^\bot$ and $W'_{k+1}=W'^\bot_k$. Note that $\alpha_{k+1}$ is orthogonal. 

The point $h$ can reconstructed from these data via the diagram \eqref{eq:vollst_Homomorphismus}.
We now fix an ample line bundle $\mc{O}_{\overline{\SO}(n)}(\theta_1,\ldots,\theta_{r})$.
\begin{lemma} \label{lem:odd_orth}
 Let $\la\in X_*(\SO(n))$ be a one-parameter subgroup with associated weighted flag $(V_\bullet,\tup{\alpha})$ and $h\in \overline{\SO}(n)$ with descending coisotropic flag $W_\bullet$ as above. Then we have
 \[
  \mu(\la,h)=\sum_{j=1}^{\len(V_\bullet)/2} \alpha_j
    \sum_{i=1}^{n-1}\theta_i n c'_i(V_j,W_\bullet)
 \]
with
\[
  c'_i(U,W_\bullet):=c_i(U,W_\bullet)+c_i(U^{\bot},W_\bullet)-i\,.
 \]
\end{lemma}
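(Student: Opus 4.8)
The plan is to reduce to the $\SL(n)$ computation of Lemma~\ref{lem:mu_in_wc} factor by factor and then collapse the resulting sum using the symmetry that the orthogonal form imposes on $\la$. Since $\overline{\SO}(n)$ is embedded in $Y=\prod_{i=1}^r\PP(\End(\bigwedge^i\CC^n)^\vee)$ and the linearization is $\mc{O}_{\overline{\SO}(n)}(\theta_1,\dots,\theta_r)$, the weight splits as $\mu(\la,h)=\sum_{i=1}^r\theta_i\,\mu(\la,[h_i])$, where $h_i\in\End(\bigwedge^i\CC^n)$ is the representative supplied by diagram~\eqref{eq:vollst_Homomorphismus}. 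For each fixed $i$ I would compute $\mu(\la,[h_i])$ exactly as in the proof of Lemma~\ref{lem:mu_in_wc}: fixing a $\la$-eigenbasis adapted to $V_\bullet$, one determines for every step $V_j$ the largest dimension $d_{i,j}$ of a subspace $U\subseteq V_j$ whose basis vectors occur in a decomposable vector $v$ with $h_i(v)\neq0$, so that $\mu(\la,[h_i])=\sum_j\alpha_j\,(n\,d_{i,j}-i\dim V_j)$.

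The new point relative to the linear case is that $h_i$ is reconstructed from the coisotropic flag $W_\bullet$ together with the orthogonal middle isomorphism $\alpha_{k+1}$ and the convention $W_{k+1}=W_k^\bot$, rather than from a complete flag. Tracing the nonvanishing condition $h_i(v)\neq0$ through~\eqref{eq:vollst_Homomorphismus}, one sees that for $i\le r$ only the lower part $W_1\supset\dots\supset W_k\supset W_k^\bot$ is relevant, so that $d_{i,j}=c_i(V_j,W_\bullet)$ with $c_i$ as in~\eqref{EQ:definition_c}; the orthogonality of $\alpha_{k+1}$ is exactly what guarantees that the extension by $W_k^\bot$ is the correct one. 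This already yields $\mu(\la,h)=\sum_{i=1}^r\theta_i\sum_j\alpha_j\,(n\,c_i(V_j,W_\bullet)-i\dim V_j)$, which I would then rewrite over the full range $i=1,\dots,n-1$ by means of the self-duality $\bigwedge^i\CC^n\cong\bigwedge^{n-i}\CC^n$ coming from the form, under which $h_i$ and $h_{n-i}$ are adjoint and carry the same $\la$-weight (with $\theta_{n-i}:=\theta_i$).

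Finally I would invoke the symmetry of $\la$. Because $\la\in X_*(\SO(n))$, its eigenspaces pair off under the form, so $V_j^\bot=V_{\len(V_\bullet)+1-j}$, the weights satisfy $\alpha_j=\alpha_{\len(V_\bullet)+1-j}$, and $\dim V_j+\dim V_j^\bot=n$ (the fixed weight-zero line accounting for the odd dimension). Folding the index $j$ onto its lower half $j=1,\dots,\len(V_\bullet)/2$ and pairing $V_j$ with $V_j^\bot$, the two copies of $n\,c_i$ combine into $c_i(V_j,W_\bullet)+c_i(V_j^\bot,W_\bullet)$, while the dimension terms add up to $-i(\dim V_j+\dim V_j^\bot)=-in$, producing precisely $c'_i(V_j,W_\bullet)$ and the asserted formula. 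I expect the main obstacle to be the combinatorial step of the second paragraph: determining exactly when the orthogonal homomorphism $h_i$ is nonzero on a decomposable vector—in particular isolating the effect of the orthogonality of $\alpha_{k+1}$ and of the identification $W_{k+1}=W_k^\bot$—and keeping careful track of the self-duality so that the contributions of $i$ and $n-i$ assemble into a single $c'_i$ without spurious constants.
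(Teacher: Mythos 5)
Your proof follows the paper's argument exactly: the weighted flag of a one-parameter subgroup of $\SO(n)$ has the symmetric form $V_1\subset\cdots\subset V_s\subset V_s^\bot\subset\cdots\subset V_1^\bot$ with matching weights, so applying Lemma \ref{lem:mu_in_wc} to each pair $V_j$, $V_j^\bot$ and using $\dim(V_j)+\dim(V_j^\bot)=n$ folds the two dimension terms into $-in$ and produces $c'_i$. The only caveat concerns your final duality step extending the index range from $i\le r$ to $i\le n-1$: the upper limit $n-1$ in the statement is evidently a misprint for $r$ (compare the wonderful compactification sitting in $\prod_{i=1}^{r}$ and the sum $\sum_{i=1}^{r}$ in Proposition \ref{prop:odd_orthogonal_group}), and setting $\theta_{n-i}:=\theta_i$ as you propose would double-count every term, so that step should simply be dropped.
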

\begin{proof}
 The one-parameter subgroup $\lambda$ determines a weighted flag of the form
 \[
  \{0\}\subset V_1\subset \cdots \subset V_s\subset V_s^\bot\subset \cdots \subset V_1^\bot\subset \CC^n
 \]
with isotropic spaces $V_1,\ldots,V_s$ and weights $(\alpha_1,\ldots,\alpha_s,\alpha_s,\ldots,\alpha_1)$. Since $\dim(U^\bot)=n-\dim(U)$ the formula follows from Lemma \ref{lem:mu_in_wc}.
\end{proof}

Let $(\PB,s)$ be an $\SO(n)$-bundle with a level structure. The bundle $\PB$ determines a vector bundle $E$ of rank $n$ together with a symmetric isomorphism $\phi:E\to E^\vee$. A level structure on $\PB$ determines a subset $\tup{r}\subset \{1,\ldots,r\}$ and a descending coisotropic flag $W_\bullet$ of length $k=|\tup{r}|$ in $E_{x_0}$ with $\dim(W_j)=n-r_j$.
\begin{proposition} \label{prop:odd_orthogonal_group}
 The $\SO(n)$-bundle $\PB$ with a level structure is $\delta$-\textup{(}semi\nobreakdash-\textup{)}stable if and only if every isotropic subbundle $F\subset E$ satisfies
 \[
  2\deg(F) (\le) \delta\sum_{i=1}^{r}\theta_i  c'_i(F_{|x_0},W_\bullet)\,.
 \]
 \end{proposition}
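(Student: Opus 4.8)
The plan is to feed the weighted reductions of $\PB$ into the vector-bundle criterion of Lemma \ref{lem:PB_stable<=>VB_stable} and to evaluate the two resulting contributions with the help of Lemma \ref{lem:odd_orth}. Since $G=\SO(n)$ is semisimple, $\rad(G)$ is trivial, so the index set $T$ is a single point, $\ka_t=1$, and there are no non-trivial characters, whence $\chi=0$ and the term $\langle\chi,\ka_*\la\rangle$ vanishes. Lemma \ref{lem:PB_stable<=>VB_stable} then says that $(\PB,s)$ is $\delta$-\sstable{} if and only if
\[
 M_{\tup{\ka},\tup{0}}(E_\bullet,\tup{\alpha})+\delta\,\mu_2(E_\bullet,\tup{\alpha},s)\ (\ge)\ 0
\]
holds for every weighted flag $(E_\bullet,\tup{\alpha})$ of $E$ coming from a weighted reduction of $\PB$.

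First I would translate such weighted reductions into isotropic flags. A weighted reduction of an $\SO(n)$-bundle is a reduction to a parabolic $Q_G(\la)$, and as recalled in the proof of Lemma \ref{lem:odd_orth} the one-parameter subgroup $\la$ produces a symmetric weighted flag
\[
 0\subset F_1\subset\cdots\subset F_s\subset F_s^\bot\subset\cdots\subset F_1^\bot\subset E
\]
with isotropic subbundles $F_1,\ldots,F_s$ and palindromic weight vector $(\alpha_1,\ldots,\alpha_s,\alpha_s,\ldots,\alpha_1)$; conversely every flag of isotropic subbundles arises this way.

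Next I would compute the two terms. As $\det(E)=\mc{O}_X$ forces $\deg(E)=0$ and $\rk(E)=n$, the degree term is $M_{\tup{\ka},\tup{0}}(E_\bullet,\tup{\alpha})=-n\sum_k\alpha_k\deg(E_k)$ over all $2s$ steps. The symmetric form identifies $F^\bot$ with $(E/F)^\vee$, so $\deg(F_j^\bot)=\deg(F_j)$, and together with the palindromic weights this collapses to
\[
 M_{\tup{\ka},\tup{0}}(E_\bullet,\tup{\alpha})=-2n\sum_{j=1}^s\alpha_j\deg(F_j)\,.
\]
For the decoration term, Lemma \ref{lem:odd_orth} applied to the point $h\in\overline{\SO}(n)$ determined by $s$ and $W_\bullet$ gives $\mu_2(E_\bullet,\tup{\alpha},s)=\mu(\la,h)=n\sum_{j=1}^s\alpha_j\sum_{i=1}^r\theta_i\,c'_i(F_{j|x_0},W_\bullet)$, where only $\theta_1,\ldots,\theta_r$ occur because the fixed linearization lives on $\prod_{i=1}^r\PP(\End(\bigwedge^i\CC^{n})^\vee)$.

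Substituting both expressions, factoring out the positive integer $n$ and dividing through, the stability inequality becomes
\[
 \sum_{j=1}^s\alpha_j\left(\delta\sum_{i=1}^r\theta_i\,c'_i(F_{j|x_0},W_\bullet)-2\deg(F_j)\right)\ (\ge)\ 0\,.
\]
Since the weights $\alpha_j\in\QQ_{>0}$ are arbitrary and independent, this holds for every symmetric weighted isotropic flag if and only if $2\deg(F)\ (\le)\ \delta\sum_{i=1}^r\theta_i\,c'_i(F_{|x_0},W_\bullet)$ holds for each isotropic subbundle $F\subset E$: the forward implication is a positive linear combination of single-step inequalities, and the backward one uses the length-one flag $0\subset F\subset F^\bot\subset E$. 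This is exactly the asserted criterion. I expect the only genuine care to lie in the orthogonal bookkeeping of the third paragraph—tracking $\deg(F^\bot)=\deg(F)$ and the doubling caused by the palindromic weights—while the reduction from flags to single subbundles and the input of Lemma \ref{lem:odd_orth} are routine.
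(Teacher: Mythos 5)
Your proposal is correct and follows essentially the same route as the paper's (much terser) proof: identify the test objects as the symmetric isotropic flags \eqref{eq:weighted_isotropic_flag} with palindromic weights, use $\deg(F_j^\bot)=\deg(F_j)$ and $\deg(E)=0$ to collapse the degree term, and feed the decoration term through Lemma~\ref{lem:odd_orth}. The additional bookkeeping you supply (the factor $2n$, the reduction to single-step flags) is exactly what the paper leaves implicit.
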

\begin{proof}
 The test objects are weighted reductions of $\PB$ to a parabolic subgroup, i.e. flags of the form
 \begin{equation}\label{eq:weighted_isotropic_flag}
   \{0\}\subset F_1\subset \cdots \subset F_l\subset F_l^\bot \subset \cdots \subset F_1^\bot \subset E\,
 \end{equation}
Since $\deg(F_j)=\deg(F_j^\bot)$ and $\deg(E)=0$ the claim follows from Lemma \ref{lem:odd_orth}.
\end{proof}

\subsubsection{The symplectic group}
We come to the case $G=\Sp(2r)$. Let $\Sp(2r)\subset \GL(2r)$ be the subgroup fixing the antisymmetric bilinear form 
\[
 J=\begin{pmatrix}
    0 & M_r\\
    -M_r & 0
   \end{pmatrix} 
\]
on $V:=\CC^{2r}$. With this convention, we can choose the set of diagonal matrices as a maximal torus $T$ and the set of upper triangular matrices as a Borel subgroup. We define the characters $L_i\in X^*(T)$ by
\[
 L_i(\opname{diag}(z_1,\ldots,z_r,z_r^{-1},\ldots,z_1^{-1})):=z_i\,.
\]
Then, the primitive roots are $\alpha_i:=L_i-L_{i+1}$ for $1\le i \le r-1$ and $2L_r$. The corresponding fundamental weights are $\omega_i:=\sum_{j=1}^j L_j$ for $i=1,\ldots, r$. Let $\phi_i:\bigwedge^i V\to \bigwedge^{i-2}V$ be the contraction using $J$. Then, the irreducible representations with highest weight $\omega_i$ are $V_1:=V$ for $i=1$ and $V_i:=\ker(\phi_i)$ for $i=2,\ldots,r$ (see \cite[\S 17]{Fulton-Harris}). Note that a pure vector in $\bigwedge^i V$ lies in $V_i$ if and only if it is a wedge product of basis vectors of an isotropic subspace. The wonderful compactification $\overline{\Sp}(2r)$ of $\Sp(2r)$ can thus be constructed inside
\[
 \prod_{i=1}^r \PP\left(\End(V_i)^\vee\right)\,.
\]
A point $h\in \overline{\Sp}(2r)$ determines a subset $\{r_1,\ldots,r_k\}\subset \{1,\ldots,r\}$, a descending coisotropic flag $W_\bullet$ of length $k$ in $V$ with $\dim(W_j)=2r-r_j$, an ascending isotropic flag $W'_\bullet$ in $V$ with $\dim(W'_j)=r_j$, isomorphisms $\alpha_j\in\PP(\Hom(W_{j-1}/W_{j}, W'_j/W'_{j-1})^\vee)$ for $j=1,\ldots,k$ and a symplectic isomorphism $[\alpha_{k+1}]\in\PP(\Hom(W_k/W_k^\bot , W'^\bot_k/W'_k)^\vee)$. The homomorphisms $h_i$ defined as in \eqref{eq:vollst_Homomorphismus} restrict to endomorphisms of $V_i$, which represent $h$.

We fix an ample line bundle on $\overline{\Sp}(2r)$ by choosing positive integers $\theta_1,\ldots,\theta_r$.
\begin{lemma}
 Let $\la\in X_*(\Sp(2r))$ be a one-parameter subgroup with associated weighted flag $(V_\bullet,\tup{\alpha})$ and $h\in \overline{\Sp}(2r)$ with descending coisotropic flag $W_\bullet$ as above. Then we have
 \[
  \mu(\la,h)=2r\sum_{j=1}^{\len(V_\bullet)/2}\alpha_j \sum_{i=1}^r \theta_i 
 \max\{c_i'(V_j,\tilde{W}_\bullet)\,|\,\tilde{W}_\bullet\in D(W_\bullet)\}\,.
 \]
 Here, $D(W_\bullet)$ denotes the set of all extensions $\tilde{W}_\bullet:W_1\supset \cdots \supset W_k\supset W$ of $W_\bullet$ such that $W$ is maximal isotropic.
\end{lemma}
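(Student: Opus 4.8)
The plan is to adapt the computation of Lemma~\ref{lem:odd_orth} to the symplectic representations $V_i=\ker(\phi_i)$, the essential new feature being that these are proper subrepresentations of $\bigwedge^i V$ rather than the full exterior powers. Since a pure vector of $\bigwedge^i V$ lies in $V_i$ if and only if it is a wedge of basis vectors spanning an isotropic subspace, the weight $\mu(\la,h)=\sum_{i=1}^r\theta_i\,\mu(\la,h_i)$ is governed entirely by \emph{isotropic} pure vectors. First I would record, exactly as in the proof of Lemma~\ref{lem:mu_in_wc}, that for a fixed level $i$ and a fixed step $V_j$ of the flag the contribution of $h_i$ is controlled by the maximal dimension of a subspace $U\subset V_j$ whose basis vectors occur in a pure vector $v$ with $h_i(v)\neq 0$---but now subject to the additional constraint that $v\in V_i$, i.e.\ that the span of its basis vectors be isotropic.

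Next I would analyse the support of $h_i$ on isotropic pure vectors through the reconstruction diagram~\eqref{eq:vollst_Homomorphismus}. The homomorphism $h_i$ is assembled from the graded isomorphisms $\alpha_1,\dots,\alpha_k$ attached to the coisotropic flag $W_\bullet$ together with the symplectic isomorphism $[\alpha_{k+1}]$ on the symplectic quotient $W_k/W_k^\bot$. The decisive point is that $\alpha_{k+1}$ has full rank, so its induced map is non-zero on the wedge of \emph{every} Lagrangian of $W_k/W_k^\bot$; equivalently, an isotropic pure vector $v$ satisfies $h_i(v)\neq 0$ precisely when $v$ is compatible with the coisotropic flag $W_\bullet$ and is supported, in the symplectic quotient, on some Lagrangian. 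Each such Lagrangian $W$ with $W_k^\bot\subset W\subset W_k$ completes $W_\bullet$ to a full flag $\tilde W_\bullet\in D(W_\bullet)$, and for this fixed $\tilde W_\bullet$ the compatibility condition is exactly the one appearing in the proof of Lemma~\ref{lem:mu_in_wc}. Hence the maximal admissible $U\subset V_j$ compatible with $\tilde W_\bullet$ has dimension $c_i(V_j,\tilde W_\bullet)$ as defined in~\eqref{EQ:definition_c}.

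It then remains to symmetrise and to optimise over the Lagrangian. As in the proof of Lemma~\ref{lem:odd_orth}, the one-parameter subgroup $\la$ of $\Sp(2r)$ determines a weighted flag of the symmetric shape $\{0\}\subset V_1\subset\cdots\subset V_s\subset V_s^\bot\subset\cdots\subset V_1^\bot\subset V$ with weights $(\alpha_1,\dots,\alpha_s,\alpha_s,\dots,\alpha_1)$ and $s=\len(V_\bullet)/2$. Grouping the contributions of the paired steps $V_j$ and $V_j^\bot$, using $\dim(V_j)+\dim(V_j^\bot)=2r$ together with the identity $c_i(U,\tilde W_\bullet)+c_i(U^\bot,\tilde W_\bullet)-i=c'_i(U,\tilde W_\bullet)$, collapses the two summands into the combination $2r\,c'_i(V_j,\tilde W_\bullet)$, exactly as the factor $n$ arose in the odd orthogonal case. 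Finally, optimising over the admissible isotropic pure vectors amounts to maximising over the Lagrangian completions $\tilde W_\bullet\in D(W_\bullet)$, which yields the asserted formula. I expect the main obstacle to be precisely this last point: one must check that a \emph{single} Lagrangian $W$ simultaneously governs the paired steps $V_j$ and $V_j^\bot$, so that the maximum is of the coupled quantity $c'_i(V_j,\tilde W_\bullet)=c_i(V_j,\tilde W_\bullet)+c_i(V_j^\bot,\tilde W_\bullet)-i$ and does not split into an independent maximum of each summand. This coupling is forced because the extremal weight component of the endomorphism $h_i$ is realised by one isotropic pure vector $v$, whose support determines both intersection numbers at once; verifying that this extremal $v$ can always be taken supported on a Lagrangian completing $W_\bullet$---and that no larger $U$ is gained by violating isotropy---is the technical heart of the argument.
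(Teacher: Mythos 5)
Your proposal is correct and follows essentially the same route as the paper's own proof: reduce to isotropic pure wedges (since $V_i=\ker(\phi_i)$), characterize the support of $h_i$ in terms of Lagrangian completions $\tilde{W}_\bullet\in D(W_\bullet)$ of the coisotropic flag, and pair $V_j$ with $V_j^\bot$ using $\dim(V_j)+\dim(V_j^\bot)=2r$ to collapse the two contributions into $2r\,c_i'(V_j,\tilde{W}_\bullet)$. If anything, you are more explicit than the paper's rather terse argument about the one genuinely delicate point, namely that a single extremal isotropic pure vector (hence a single Lagrangian completion) must simultaneously realize both intersection numbers $c_i(V_j,\tilde{W}_\bullet)$ and $c_i(V_j^\bot,\tilde{W}_\bullet)$.
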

\begin{proof}
The flag $V_\bullet$ is of the form
\[
 \{0\}\subset V_1\subset \cdots \subset V_s\subseteq V_{s+1}=V_s^{\bot} \subset \cdots \subset V_{2s}=V_1^\bot\subset V
\]
with isotropic subspaces $V_1,\ldots,V_s$ and weights $(\alpha_1,\ldots,\alpha_s,\alpha_s,\ldots,\alpha_1)$. Now for fixed $i$ and $j$ one needs to determine a subspace $U\subset V_j$ of maximal dimension, such its basis vectors occur in a wedge product $v\in V_i$ with $h_i(v)\neq 0$. The former condition means that $U$ is isotropic. Now one can always choose a symplectic basis of $V$ that extends bases of $W_\bullet$ and $V_j$. Then it is clear that for such a subspace $U$ there exists an isotropic subspace $W\subset W_k$ with $U\cap W=\{0\}$. Conversely, if $W\subset W_k$ is maximal isotropic, then there exists an isotropic subspace $U\subset V_j$ with $U\cap W=\{0\}$ and $\dim(U\cap W_{j_-(i)})\le i-i_-$ of dimension $c(V_j,\tilde{W}_\bullet)$.
\end{proof}

Let now $\PB$ be a principal $\Sp(2r)$-bundle with a level structure $s$.
The datum of $\PB$ is equivalent to the datum of a vector bundle $E$ of rank $2r$ with an antisymmetric isomorphism $\phi:E\to E^\vee$. The level structure determines a subset $\{r_1,\ldots,r_k\}\subset \{1,\ldots,r\}$ and a descending coisotropic flag $W_\bullet$ of length $k$ in $E_{|x_0}$ with $\dim(W_j)=2r-r_j$.
\begin{proposition} \label{prop:symplectic}
The symplectic bundle $\PB$ with level structure $s$ is $\delta$-\textup{(}semi\nobreakdash-\textup{)}stable if and only if every non-trivial proper isotropic subbundle $F$ of the associated vector bundle $E$ satisfies
  \begin{align*}
  2\deg(F) (\le)\, \delta\sum_{i=1}^r \theta_r \max\{c_i'(F_{|x_0},\tilde{W}_\bullet)\,|\,\tilde{W}_\bullet\in D(W_\bullet)\}\,.
 \end{align*}
\end{proposition}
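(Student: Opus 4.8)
The plan is to mirror the argument used for the odd orthogonal group in the proof of Proposition~\ref{prop:odd_orthogonal_group}, feeding in the symplectic $\mu$-computation of the preceding lemma. The starting point is Lemma~\ref{lem:PB_stable<=>VB_stable}: $(\PB,s)$ is $\delta$-\sstable{} if and only if $M_{\tup{\ka},\tup{\chi}}(E_\bullet,\tup{\alpha})+\delta\mu_2(E_\bullet,\tup{\alpha},s)(\ge)0$ holds for every weighted flag of the associated vector bundle $E$ that is induced by a weighted reduction of $\PB$. Since $\Sp(2r)$ is semisimple its radical is trivial, so $|T|=1$, $\tup{\ka}=(1)$ and $\tup{\chi}=(0)$; the associated $T$-split bundle is just the rank-$2r$ symplectic bundle $E$, which has $\deg(E)=0$. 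The parabolic subgroups of $\Sp(2r)$ are precisely the stabilizers of isotropic flags, so the admissible test objects are exactly the weighted flags $\{0\}\subset F_1\subset\cdots\subset F_l\subset F_l^\bot\subset\cdots\subset F_1^\bot\subset E$ with $F_1\subset\cdots\subset F_l$ isotropic subbundles and symmetric weight vector $(\alpha_1,\ldots,\alpha_l,\alpha_l,\ldots,\alpha_1)$.

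I would then evaluate the two summands on such a flag. For the first term, with $\tup{\ka}=(1)$ and $\tup{\chi}=(0)$ the quantity $M_{\tup{\ka},\tup{0}}(E_\bullet,\tup{\alpha})$ equals $-2r\sum\alpha\deg$ taken over the flag members; using $\deg(E)=0$ together with the symplectic identity $\deg(F_j^\bot)=\deg(F_j)$, the two halves of the symmetric flag contribute equally and this collapses to $-4r\sum_{j=1}^l\alpha_j\deg(F_j)$. For the second term I would apply the preceding lemma with $V_j=F_{j|x_0}$ (noting $\len(V_\bullet)/2=l$), which gives $\mu_2(E_\bullet,\tup{\alpha},s)=\mu(\la,h)=2r\sum_{j=1}^l\alpha_j\sum_{i=1}^r\theta_i\max\{c_i'(F_{j|x_0},\tilde{W}_\bullet)\mid\tilde{W}_\bullet\in D(W_\bullet)\}$. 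Substituting both into the stability inequality and dividing by $2r$ turns it into $\sum_{j=1}^l\alpha_j\bigl(-2\deg(F_j)+\delta\sum_{i=1}^r\theta_i\max\{c_i'(F_{j|x_0},\tilde{W}_\bullet)\mid\tilde{W}_\bullet\in D(W_\bullet)\}\bigr)(\ge)0$.

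To finish, I would use that the weights $\alpha_j\in\QQ_{>0}$ may be chosen freely and independently: the weighted sum is $(\ge)0$ for all admissible flags if and only if each individual summand is $(\ge)0$, and a one-step isotropic flag $\{0\}\subset F\subset F^\bot\subset E$ isolates the condition for an arbitrary isotropic subbundle $F$, yielding exactly the stated inequality $2\deg(F)(\le)\delta\sum_{i=1}^r\theta_i\max\{c_i'(F_{|x_0},\tilde{W}_\bullet)\mid\tilde{W}_\bullet\in D(W_\bullet)\}$. I expect the only delicate points to be purely bookkeeping: confirming the collapse of the $M$-term through $\deg(F_j^\bot)=\deg(F_j)$, and correctly matching the symmetric weight vector of the isotropic flag with the halved index range $j=1,\ldots,\len(V_\bullet)/2$ appearing in the preceding lemma. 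Once the $\mu$-computation is taken as given, the remainder is formal and parallels the odd orthogonal case verbatim.
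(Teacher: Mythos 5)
Your proposal is correct and is exactly the argument the paper intends: it mirrors the proof of Proposition \ref{prop:odd_orthogonal_group} (reduce to isotropic flags via Lemma \ref{lem:PB_stable<=>VB_stable}, collapse the $M$-term using $\deg(F_j^\bot)=\deg(F_j)$ and $\deg(E)=0$, and insert the $\mu$-formula from the preceding lemma), which is precisely why the paper omits a written proof here. The bookkeeping points you flag (the symmetric weight vector versus the halved index range, and isolating a single isotropic subbundle with a one-step flag) are handled correctly.
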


\subsubsection{The even orthogonal group}
Finally, let $\SO(2r)\subset \GL(2r)$ be the subgroup fixing the symmetric bilinear form given by $M_{2r}$ on $V:=\CC^{2r}$. Here, the discussion is complicated by the presence of two types of maximal isotropic subspaces, the self-dual and the anti-self-dual ones. The isomorphism $\bigwedge^{2r}V\to \CC$ defined by $e_1\wedge\ldots\wedge e_{2r}\mapsto 1$ defines a non-degenerate bilinear from $\wedge:\bigwedge^r V\times \bigwedge^r V\to \CC$ and hence an isomorphism $\psi:\bigwedge^rV\to \bigwedge^r V^\vee$. The map $\tau:=\psi^{-1}\circ \wedge^r J$ satisfies $\tau^2=\id$ and the representation $\bigwedge^r V$ decomposes into invariant eigenspaces $V^{\pm}$ with eigenvalues $\pm 1$. The image of a subspace $U$ of $V$ under the Pl\"ucker embedding lies in the projectivization of one of these if and only if $U$ is maximal isotropic. One says that $U$ is \emph{self-dual} or of type $+1$ if its image lies in $\PP(V^+)$ and \emph{anti-self-dual} or of type $-1$ if its image lies in $\PP(V^-)$. Note that if 
$U$ and $W$ are maximal isotropic subspaces of type $a$ and $b$ respectively, then $a=b(-1)^{\dim(U\cap W)-r}$.

Let $T$ be the maximal torus of diagonal matrices and $B$ the Borel subgroup of upper triangular matrices. We define the characters $L_i$ in $X^*(T)$ by
\[
 L_i(\opname{diag}(z_1,\ldots,z_r,z_r^{-1},\ldots,z_1^{-1})):=z_i\,, \qquad 1\le i \le r\,.
\]
The simple roots are $\alpha_i:=L_i-L_{i+1}$ and $\alpha_r:=L_{r-1}+ L_r$; the corresponding fundamental weights are $\omega_i:=\sum_{j=1}^i L_j$ for $i=1,\ldots,r-2$ and $\omega_{\pm}:=\frac{1}{2}(\sum_{j=1}^{r-1} L_j\pm L_r)$. We work with the weights $\chi_i:=\omega_i$ and $\chi_{\pm}:=2\omega_{\pm}$. Their irreducible representations are $V(\chi_i)=\bigwedge^i V$ for $i=1,\ldots, r-2$ and $V(\chi_\pm)= V^\pm$ (see \cite[\S 19]{Fulton-Harris}). The wonderful compactification $\overline{\SO}(2r)$ is the closure of $\SO(2r)$ in
\[
 \PP(\End(V_r^+)^\vee)\times \PP(\End(V_r^-)^\vee)\times  \prod_{i=1}^{r-2}\PP(\End(\bigwedge^i V)^\vee)\,.
\]
A point $h\in \overline{\SO}(n)$ determines the following: A subset $I$ of $\{1,\ldots,r-2,+,-\}$, a descending coisotropic flag $W_\bullet$ of length $k$ in $V$ with $\dim(W_j)=2r-r_j$, an ascending isotropic flag $W'_\bullet$ in $V$ with $\dim(W'_j)=r_j$ and isomorphisms $[\alpha_j]\in\PP(\Hom(W_{j-1}/W_j, W'_j/W'_{j-1})^\vee)$ for $j=1,\ldots,k$.

Furthermore, if $\{+,-\}\cap I=\varnothing$ there is an orthogonal isomorphism 
\[
 [\alpha_{k+1}]\in \PP(\Hom(W_k/W^\bot_k, W'^\bot_k/W'_k)^\vee)\,.
\]
For $\pm \in I$ there is a maximal isotropic subspace $W_\pm\subset W_k$ of type $\pm(-1)^r$ and a maximal isotropic subspace $W'_{\pm}\supset W'_k$ of type $\pm$. If $\pm\in I$ and $\mp\notin I$, then there is an isomorphism 
\[
[\alpha_\pm]\in\PP(\Hom(W_{k}/W_\pm,W'_{\pm}/W'_{k})^\vee)\,.
\]
If we have $\{+,-\}\subset I$, then $W_{k+1}:=W_+ +W_-$ is coisotropic of dimension $r+1$, $W'_{k+1}:=W'_+ \cap W'_-$ is isotropic of dimension $r-1$ and there are isomorphisms $[\alpha_{k+1}]\in\PP(\Hom(W_{k}/W_{k+1},W'_{k+1}/W'_k)^\vee)$ and  $[\alpha_\pm]\in\PP(\Hom(W_{k+1}/W_\pm,W'_{\pm}/W'_{k+1})^\vee)$. Note that in this case there are canonical isomorphisms
\begin{align*}
 (W_{k+1}/W_+)^\vee &\cong (W_+/(W_+\cap W_-))\cong W_{k+1}/W_-\,,\\
 (W'_{+}/W'_{k+1})^\vee &\cong (W'_+ + W'_-)/W'_+ \cong W'_-/W'_{k+1}\,,
\end{align*}
and one has $[(\alpha_+^\vee)^{-1}]=[\alpha_-]$ in $\PP(\Hom(W_{k+1}/W_\pm,W'_{\pm}/W'_{k+1})^\vee)$.

We have to explain how to reconstruct $h$ from the data above: For $i=1,\ldots,r-2$ we define homomorphisms $h_i$ as in \eqref{eq:vollst_Homomorphismus}.
If $\{+,-\}\cap I=\varnothing$, the homomorphism $h_r$ defined as in \eqref{eq:vollst_Homomorphismus} using $W_{k+1}:=W_k^\bot$ and $W'_{k+1}:=W'^\bot_k$ restricts to homomorphisms $h_\pm:E_{|x_0}^{\pm}\to V^\pm$ because $\alpha_{k+1}$ is orthogonal. If $\{+,-\}\subset I$, then one defines $h_+$ and $h_-$ via \eqref{eq:vollst_Homomorphismus} with the flags $W_1\supset \cdots \supset (W_+ + W_-)\supset W_\pm$ and $W'_1\subset \cdots \subset W'_{+}\cap W'_-\subset W'_\pm$. 

If $\{+,-\}\cap I= \{+\}$, $h_+$ can be constructed via \eqref{eq:vollst_Homomorphismus} using $W_{k+1}:=W_+$, $W'_{k+1}:=W'_+$ and $\alpha_{k+1}=\alpha_+$. In order to define $h_-$, we consider the isomorphism
\[
(\alpha_+^\vee)^{-1}:W_+/W_k^\bot\cong (W_k/W_+)^\vee\to (W'_+/W'_k)^\vee\cong W'^\bot_k/W'_+\,.
\]
 Now, $h_-$ is defined as the homomorphism induced by $\alpha_1,\ldots\alpha_k,\alpha_+$ and $(\alpha_{+}^{\vee})^{-1}$ via
\[
 \xymatrix{
  {\displaystyle \bigwedge^r V}  \ar@{->>}[r] \ar@{-->}[d]_{h_-} & {\displaystyle \left(\bigotimes_{j=1}^{k} \bigwedge^{r_j-r_{j-1}}(W_{j-1}/W_j)\right)\otimes   \left(\bigwedge^{r-r_k-1} W_k/W_+\right) \otimes W_+/W_k^\bot } \ar[d] \\
   {\displaystyle \bigwedge^r V} & 
  {\displaystyle \left(\bigotimes_{j=1}^{k} \bigwedge^{r_j-r_{j-1}} (W'_{j}/W'_{j-1})\right) \otimes \left(\bigwedge^{r-r_k-1} W'_+/W'^\bot_k\right) \otimes W'^\bot_k/W'_+ }\rlap{\,.} \ar@{>->}[l]
  }
 \]
A similar construction yields $h_+$ if $\{+,-\}\cap I=\{-\}$. These homomorphisms represent $h$.

We define $D_{\pm}(W_\bullet)$ to be the set of flags $W_1\supset \cdots \supset W_{k}\supset W$  with the following properties:
\begin{itemize}
 \item If $\pm\in I$, then $W=W_\pm$.
 \item In any case, $W$ is maximal isotropic of type $\pm(-1)^r$.
 \item If $\mp\in I$, then $\dim(W+ W_{\mp})=r+1$.
\end{itemize}

\begin{lemma} \label{lem:even_orth}
 Let $\la\in X_*(\SO(2r))$ be a one-parameter subgroup with associated weighted flag $(V_\bullet,\tup{\alpha})$ and $h\in \overline{\SO}(2r)$ with descending coisotropic flag $W_\bullet$ as above. Then we have
 \[
  \mu(\la,[h_\pm])=\max\left\{c'_r(U,\tilde{W}_\bullet)\,\middle|\,\tilde{W}_\bullet\in D_{\pm}(W_\bullet) \right\}=: c'_{\pm}(U,W_\bullet)\,.
 \]
\end{lemma}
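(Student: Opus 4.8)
The plan is to reduce the computation of $\mu(\la,[h_\pm])$ to the $\GL$-computation already carried out in Lemma \ref{lem:mu_in_wc}, in exactly the way the symplectic case was handled by maximizing over extensions of the coisotropic flag. Recall that for a one-parameter subgroup with associated weighted flag $(V_\bullet,\tup{\alpha})$ the Hilbert--Mumford weight of a point in a projectivized exterior or endomorphism representation is controlled, level by level, by the largest $\la$-graded subspace $U\subseteq V_j$ whose Pl\"ucker coordinate is not annihilated by the homomorphism representing the point. Since $V^\pm=V(\chi_\pm)$ is spanned by the images of wedge products of bases of maximal isotropic subspaces of the single type $\pm(-1)^r$, only such maximal isotropic $U$ can contribute, and the weight at level $j$ is governed by the maximal-dimensional such $U$ inside $V_j$ that survives $h_\pm$.

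First I would determine, case by case on the subset $I\subset\{1,\dots,r-2,+,-\}$, which maximal isotropic subspaces survive $h_\pm$. When $\pm\in I$ the explicit construction of $h_\pm$ via the flag terminating in $W_{k+1}:=W_\pm$ forces the terminal member of the surviving flag to be exactly $W_\pm$. When $\pm\notin I$ the homomorphism $h_\pm$ is built either from the orthogonal isomorphism $\alpha_{k+1}$ (if $\{+,-\}\cap I=\varnothing$) or from the dualized isomorphism $(\alpha_\mp^\vee)^{-1}$ (if $\mp\in I$); in the first situation the terminal isotropic may be any maximal isotropic of the correct type contained in $W_k$, while in the second the requirement that $\alpha_\mp$ and $(\alpha_\mp^\vee)^{-1}$ be simultaneously non-zero on the wedge imposes $\dim(W+W_\mp)=r+1$. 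These are precisely the three defining conditions of $D_\pm(W_\bullet)$, so the admissible terminal flags are exactly the elements $\tilde W_\bullet\in D_\pm(W_\bullet)$.

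With the surviving flags identified, each $\tilde W_\bullet\in D_\pm(W_\bullet)$ reduces the problem to Lemma \ref{lem:mu_in_wc} with $i=r$: the domain factor in the diagram \eqref{eq:vollst_Homomorphismus} contributes $c_r(V_j,\tilde W_\bullet)$ and, by the orthogonal self-duality of $V^\pm$, the codomain factor contributes the dual term $c_r(V_j^\bot,\tilde W_\bullet)$. As in Lemma \ref{lem:odd_orth} these combine into $c'_r(V_j,\tilde W_\bullet)=c_r(V_j,\tilde W_\bullet)+c_r(V_j^\bot,\tilde W_\bullet)-r$, with $c_r$ as in \eqref{EQ:definition_c} and the terms proportional to $\dim(V_j)$ cancelling because $\dim(V_j^\bot)=2r-\dim(V_j)$. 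Taking the maximum over all admissible $\tilde W_\bullet$ yields the asserted value $c'_\pm(V_j,W_\bullet)$, and summing against the weights $\alpha_j$ gives $\mu(\la,[h_\pm])$ in the normalization of the preceding lemmas.

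I expect the main obstacle to be the mixed case $\{+,-\}\cap I=\{\pm\}$, where $h_\mp$ is defined through the dualized map $(\alpha_\pm^\vee)^{-1}$ rather than directly. Here one must check carefully that the pure wedges surviving $h_\mp$ correspond exactly to the maximal isotropic subspaces $W$ of type $\mp(-1)^r$ meeting $W_\pm$ in dimension $r-1$, i.e. with $\dim(W+W_\pm)=r+1$, and that the type bookkeeping $a=b(-1)^{\dim(U\cap W)-r}$ remains consistent as one passes through the dualization. Verifying that $D_\pm(W_\bullet)$ captures exactly these flags, with no spurious extensions slipping in, is the delicate point; once this is settled the remaining computation is the routine exterior-algebra weight count already used in Lemmas \ref{lem:mu_in_wc} and \ref{lem:odd_orth}.
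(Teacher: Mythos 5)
Your proposal follows essentially the same route as the paper: reduce to the exterior-algebra weight count of Lemma \ref{lem:mu_in_wc} after normalizing to a common maximal torus, treat the case $\{+,-\}\cap I=\varnothing$ as in the symplectic setting, observe that $D_\pm(W_\bullet)$ is a singleton when $\pm\in I$, and concentrate the real work in the mixed case on the bijection between maximal isotropic subspaces $U'$ of the opposite type surviving $h_\mp$ and flags in $D_\mp(W_\bullet)$ with $\dim(W+W_\pm)=r+1$ --- which is exactly the two-way correspondence the paper's proof establishes. The only blemish is the remark that $V^\pm$ is spanned by wedges of maximal isotropics of type $\pm(-1)^r$; with the paper's conventions these have type $\pm$ (the factor $(-1)^r$ belongs to the complementary subspace $W\subset W_k$), but this does not affect the argument, which correctly uses the relation $a=b(-1)^{\dim(U\cap W)-r}$ where it matters.
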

\begin{proof}
 Since any two parabolic subgroups contain a common maximal torus, we may assume that $V_\bullet$ and $W_\bullet$ are defined using the standard basis of $V$. If $\{+,-\}\cap I=\varnothing$, then the argument is the same as in the symplectic case. If $+ \in I$, the set $D_+(W_\bullet)$ contains precisely one flag and the result is obvious. Similarly if $-\in I$. Suppose now that $\{+,-\}\cap I=\{+\}$. Then for a subspace $U\subset V_j$ contained in an anti-self-dual maximal isotropic subspace $U'$ with $\dim(U'\cap W_+)=1$ and $U'\cap W_k^\bot=\{0\}$ there exists a maximal isotropic complement $W\subset W_k$ of $U'$ of type $-(-1)^r$ with $\dim(W+W_+)=r+1$. Conversely, if $W\subset W_k$ is a maximal isotropic subspace of type $-(-1)^r$ with $\dim(W+W_+)=r+1$, then there is a subspace $U\subset V_j$ of dimension $c(V_j,\tilde{W}_\bullet)$ that is contained in an anti-self-dual maximal isotropic subspace $U'\subset V$ with with $\dim(U'\cap W_+)=1$ and $U'\cap W_k^\bot=\{0\}$.
\end{proof}

Let $\PB$ be an $\SO(2r)$-bundle with a level structure. The bundle $\PB$ determines a vector bundle $E$ of rank $2r$ together with a symmetric isomorphism $\phi:E\to E^\vee$. A level structure on $\PB$ provides a subset $I$ of $\{1,\ldots,r-2,+,-\}$ and a descending coisotropic flag $W_\bullet$ of length $k$ in $E_{|x_0}$ with $\dim(W_j)=2r-r_j$. We fix an ample line bundle on $\overline{\SO}(2r)$ by choosing positive integers $\theta_1,\ldots,\theta_{r-2},\theta_-$ and $\theta_+$.
\begin{proposition}
 The $\SO(2r)$-bundle $\PB$ with a level structure is $\delta$-\textup{(}semi\nobreakdash-\textup{)}stable if and only if every non-trivial proper isotropic subbundle $F$ of the associated vector bundle $E$ satisfies
 \begin{align*}
  2\deg(F) \,(\le)\, \delta\theta_{+}c'_+ (F_{|x_0},W_\bullet) +\delta\theta_{-}c'_- (F_{|x_0},W_\bullet) +\delta\sum_{i=1}^{r-2}\theta_i c'_i(F_{|x_0},W_\bullet)  \,.
\end{align*}
\end{proposition}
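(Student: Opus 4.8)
The plan is to follow the pattern of Propositions \ref{prop:odd_orthogonal_group} and \ref{prop:symplectic}: first translate stability of the principal bundle into a numerical condition on the associated orthogonal vector bundle via Lemma \ref{lem:PB_stable<=>VB_stable}, then insert the explicit value of $\mu_2$ supplied by Lemma \ref{lem:even_orth}, and finally use the freedom in the weights to isolate a condition on a single isotropic subbundle. Since $G=\SO(2r)$ is semisimple, the index set $T$ is a singleton, $\tup{\chi}=0$ and $\tup{\ka}=1$; thus $E$ is an ordinary rank-$2r$ vector bundle with $\deg(E)=0$ carrying the symmetric isomorphism $\phi\colon E\to E^\vee$, and $M_{\tup{\ka},\tup{\chi}}(E_\bullet,\tup{\alpha})=-2r\sum_j\alpha_j\deg(E_j)$. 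By Lemma \ref{lem:PB_stable<=>VB_stable}, $(\PB,s)$ is $\delta$-\sstable{} exactly when $M_{\tup{\ka},\tup{\chi}}(E_\bullet,\tup{\alpha})+\delta\mu_2(E_\bullet,\tup{\alpha},s)\,(\ge)\,0$ holds for every weighted flag induced by a weighted reduction of $\PB$.

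Next I would identify the test objects. Weighted reductions of $\PB$ to a parabolic subgroup of $\SO(2r)$ correspond to weighted isotropic flags
\[
 \{0\}\subset F_1\subset\cdots\subset F_s\subset F_s^\bot\subset\cdots\subset F_1^\bot\subset E
\]
with palindromic weight vector $(\alpha_1,\ldots,\alpha_s,\alpha_s,\ldots,\alpha_1)$, exactly as in the symplectic case. Using $\deg(E)=0$ together with $\deg(F_j^\bot)=\deg(F_j)$ (which follows from the isomorphism $E/F_j^\bot\cong F_j^\vee$ induced by $\phi$), the two palindromic halves of the flag contribute equally, so that $M_{\tup{\ka},\tup{\chi}}(E_\bullet,\tup{\alpha})=-4r\sum_{j=1}^s\alpha_j\deg(F_j)$.

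Then I would assemble $\mu_2(E_\bullet,\tup{\alpha},s)=\mu(\la,h)$. The level structure $h$ lives in $\PP(\End(V_r^+)^\vee)\times\PP(\End(V_r^-)^\vee)\times\prod_{i=1}^{r-2}\PP(\End(\bigwedge^iV)^\vee)$, linearized in $\mc{O}(\theta_1,\ldots,\theta_{r-2},\theta_-,\theta_+)$, so $\mu(\la,h)$ is additive over the three types of factors, each weighted by the corresponding $\theta$. For the factors indexed by $i\le r-2$ the per-subspace contribution is $c'_i(F_j,W_\bullet)$ as in Lemma \ref{lem:mu_in_wc}, while for the two half-spin-type factors it is $c'_\pm(F_j,W_\bullet)$ by Lemma \ref{lem:even_orth}; summing over the palindromic flag with the same global normalization as in the symplectic and odd-orthogonal lemmas yields
\[
 \mu_2(E_\bullet,\tup{\alpha},s)=2r\sum_{j=1}^s\alpha_j\left(\theta_+c'_+(F_j,W_\bullet)+\theta_-c'_-(F_j,W_\bullet)+\sum_{i=1}^{r-2}\theta_i c'_i(F_j,W_\bullet)\right).
\]

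Finally I would substitute into the inequality of Lemma \ref{lem:PB_stable<=>VB_stable}, divide by $2r$, and observe that the result is $\sum_{j=1}^s\alpha_j\bigl(-2\deg(F_j)+\delta(\theta_+c'_++\theta_-c'_-+\sum_i\theta_i c'_i)\bigr)$ with independent positive weights $\alpha_j$. Hence it is nonnegative for every weighted isotropic flag if and only if each summand is nonnegative, i.e. if and only if every non-trivial proper isotropic subbundle $F$ satisfies the displayed inequality. The main obstacle is the assembly of $\mu_2$: unlike the symplectic and odd orthogonal cases, $\SO(2r)$ carries two conjugacy classes of maximal isotropic subspaces, so one must verify that the additive decomposition of $\mu(\la,h)$ genuinely splits off the $V^+$ and $V^-$ factors with the type-sensitive quantities $c'_+$ and $c'_-$. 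This is precisely the content that Lemma \ref{lem:even_orth} has been arranged to provide, and once it is invoked the remaining bookkeeping is routine and entirely parallel to the proof of Proposition \ref{prop:symplectic}.
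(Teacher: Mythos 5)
Your proposal is correct and follows essentially the same route as the paper: the paper's (very terse) proof likewise combines the computation of $\mu(\la,[h_i])$ for $i=1,\ldots,r-2$ from Lemma \ref{lem:mu_in_wc} with Lemma \ref{lem:even_orth} for the two half-spin factors, and then runs the same reduction to isotropic flags with palindromic weights, $\deg(F_j)=\deg(F_j^\bot)$ and $\deg(E)=0$, as in Propositions \ref{prop:odd_orthogonal_group} and \ref{prop:symplectic}. Your write-up merely makes explicit the bookkeeping that the paper leaves implicit.
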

\begin{proof}
 The computation of $\mu(\la,[h_i])$ for $i=1,\ldots,r-2$ has been carried out in the proof of Lemma \ref{lem:mu_in_wc}. Together with Lemma \ref{lem:even_orth} the claim follows.
\end{proof}

\bibliographystyle{amsplain}

\end{document}